\newcommand{\cA}{{\mathcal A}}
\newcommand{\cB}{{\mathcal B}}
\newcommand{\cE}{{\mathcal E}}
\newcommand{\cF}{{\mathcal F}}
\newcommand{\cG}{{\mathcal G}}
\newcommand{\cH}{{\mathcal H}}
\newcommand{\cN}{{\mathcal N}}
\newcommand{\cR}{{\mathcal R}}
\newcommand{\cS}{{\mathcal S}}
\newcommand{\cU}{{\mathcal U}}
\newcommand{\cV}{{\mathcal V}}
\newcommand{\cW}{{\mathcal W}}
\newcommand{\bE}{{\mathbb{E}}}
\newcommand{\bT}{{\mathbb{T}}}
\newcommand{\bG}{{\mathbb{G}}}
\newcommand{\bD}{{\mathbb{D}}}
\newcommand{\bC}{{\mathbb{C}}}
\newcommand{\bS}{{\mathbb{S}}}
\newcommand{\fD}{{\mathfrak{D}}}
\newcommand{\sbm}[1]{\left[\begin{smallmatrix} #1
		\end{smallmatrix}\right]}
\newtheorem{thm}{Theorem}[section]
\newtheorem{corollary}[thm]{Corollary}
\newtheorem{lemma}[thm]{Lemma}
\newtheorem{obs}[thm]{Observation}
\newtheorem{proposition}[thm]{Proposition}
\theoremstyle{definition}
\newtheorem{definition}[thm]{Definition}
\newtheorem{remark}[thm]{Remark}
\newtheorem{example}[thm]{Example}
\numberwithin{equation}{section}
\begin{document}
\title[Distinguished varieties and the Nevanlinna--Pick problem]{Distinguished varieties and the Nevanlinna--Pick problem on the symmetrized bidisk}

\author[Das]{B. Krishna Das}
\address[Das]{Department of Mathematics, Indian Institute of Technology Bombay, Powai, Mumbai, 400076, India}
\email{dasb@math.iitb.ac.in, bata436@gmail.com}

\author[Kumar]{Poornendu Kumar}
\address[Kumar]{Department of Mathematics, University of Manitoba, Winnipeg, Manitoba, Canada
	R3T 2N2}
\email{poornendukumar@gmail.com, Poornendu.Kumar@umanitoba.ca}

\author[Sau]{Haripada Sau}
\address[Sau]{Indian Institute of Science Education and Research, Dr. Homi Bhabha Road, Pashan, Pune, Maharashtra 411008, India.}
\email{haripadasau215@gmail.com, hsau@iiserpune.ac.in}

\subjclass[2010]{14M12, 47A57, 47A13, 32C25, 47A20, 46E22}
\keywords{Nevanlinna--Pick Interpolation, Uniqueness Varieties, Distinguished Varieties, Realization Formula, Reproducing Kernels, Admissible Kernels, Inner Functions}

\maketitle
\begin{abstract}
We study the uniqueness of the solutions of a solvable Pick interpolation problem in the symmetrized bidisk
$$\bG=\{(z_1+z_2,z_1z_2): z_1,z_2\in\bD\}.$$The uniqueness set is the largest set in $\bG$ where all the solutions to a solvable Pick problem coincide. There is a canonical construction of an algebraic variety, which coincides with the uniqueness set in $\bG$. The algebraic variety is called the uniqueness variety. A solvable Pick problem is called extremal if it has no solutions of supremum norm (over $\bG$) less than one. We show that if an $N$-point extremal Pick problem is such that none of the $(N-1)$-point sub-problems is extremal, then the uniqueness variety contains a distinguished variety that contains all the initial nodes. Here, a distinguished variety is an algebraic variety that intersects the domain $\bG$ and exits through its distinguished boundary. The proof of the first main result requires a thorough understanding of distinguished varieties. Indeed, this article is as much a study of the uniqueness varieties as it is about distinguished varieties. We obtain complete algebraic and geometric characterizations of distinguished varieties solving an unsettled problem left open by Pal and Shalit, J.\ Funct.\ Anal.\ 2014.

\end{abstract}
\maketitle

\section{Introduction}
Given a data set $\fD=\{(\lambda_j,w_j):1\leq j\leq N\}$, where $\lambda_1,\lambda_2,\dots,\lambda_N$ are distinct points of the open unit disk $\bD=\{\lambda\in\bC:|\lambda|<1\}$ and $w_1,w_2,\dots,w_N$ are some points in $\bD$, the original Pick problem asks {\em if there exists an analytic function $f:\bD\to\bD$ that interpolates the data, i.e.,
\begin{align}\label{PickInt}
f(\lambda_j)=w_j \quad\text{for}\quad j=1,2,\dots,N.
\end{align}}Pick showed in \cite{Pick'16} that such a function (referred to as an {\em interpolant}) exists if and only if the {\em Pick matrix}
$  \sbm{\frac{1-w_i\overline{w_j}}{1-\lambda_i\overline{\lambda_j}}}$
is positive semi-definite. Pick also showed that $f$ is unique if and only if the matrix \eqref{PickMatrix} has rank less than $N$, which is further equivalent to the existence of a Blaschke function of degree less than $N$ interpolating the data. Later in \cite{Nev19,Nev29}, Nevanlinna gave a characterization of all the interpolants. There have been numerous versions of this classical interpolation problem beginning with Abrahamse's work \cite{Abrahamse} for multiply connected domains. In the paper \cite{SarasonCLT}, Sarason introduced a new paradigm for solving the interpolation problem \eqref{PickInt} that paved the way for a several-variable generalization of the problem.  Moreover, Agler developed a new framework for analyzing the interpolation problem in the setting of the polydisk in \cite{Agler1988} -- see also the work of Ball and Trent \cite{Ball-Trent} for a simpler proof of Agler's solution and a parametrization of all interpolating functions. 

The {\em symmetrized bidisk} is the following domain in $\bC^2$:
\begin{align}\label{SymmBidisk}
\bG=\{(z_1+z_2,z_1z_2): z_1,z_2\in\bD\}.
\end{align}It is polynomially convex but not convex (not even biholormorphic to a convex domain). This domain was originally studied in \cite{AY1} in order to solve a particular case of the \textit{spectral interpolation problem} (see also \cite{BFT}). The domain is one of the few concrete domains which is interesting to both function theorists and operator theorists for its spectacularly rich function theory \cite{ ALY_MAMS, ay-jga, AY-R, BBK, BBM, BS, BK2, BK3, KZ2, KZ_JGA} and operator theory \cite{AY, ay-jot, BDSIMRN, BPSR, BK1, MSRZ, Pal-Shalit, SarkarIUMJ}. It is the first example of a domain that does not satisfy the hypothesis (that the domain be biholomorphic to a convex domain) of Lempert's Theorem but the conclusion of the theorem (that the Kobayashi and the Caratheodory metrics agree) holds, see \cite{CL}. This domain is an example of only a handful of domains where the {\em rational dilation problem} has an affirmative answer. Recently, the Nevanlinna-Pick interpolation program is carried out first in \cite{AY-R} and then in \cite{BS} in the setting of the symmetrized bidisk. Theorem \ref{T:SolveG} below describes a necessary and sufficient condition for the solvability of a given Pick problem $\fD=\{(\lambda_j, w_j):1\leq j\leq N\}$ in $\bG$, i.e., when the {\em initial nodes} $\lambda_j$ come from $\bG$ and, as in the classical case, the {\em target nodes} $w_j$ belong to $\bD$. Also see \cite[Theorem 4.2]{BS} and \cite[Theorem 5.1]{AY-R} for two other variations of necessary and sufficient conditions.

This paper assumes that a given Pick problem in $\bG$ is solvable and is concerned with the uniqueness of its solutions. We shall have use of the following terminologies.
\begin{definition}For a solvable data in $\bG$, the {\em uniqueness set} is the largest set in $\bG$ on which all the solutions agree. An interpolation problem is said to be {\em extremal}, if it is solvable but there is no interpolant of supremum norm less than one over $\bG$.
\end{definition}
If a Pick problem is not extremal, then there cannot be a unique solution. In fact, {\em the uniqueness set for a non-extremal problem is just the set of the initial nodes} - see Observation \ref{O:NonExt} for a  brief discussion. Therefore in order to investigate the uniqueness of the solutions of a Pick problem, one must start with an extremal problem. However, unlike the classical situation, an extremal Pick problem in $\bG$ may or may not have a unique solution -- see Examples \ref{this} and \ref{that} below. For the particular case of $2$-point problems, Theorem 5.4 of \cite{KZ_JGA} completely describes when the solution is unique (see also \cite{K-P}).

With the lack of uniqueness in general, it is, therefore, reasonable to study the uniqueness set. When the solution to a given Pick problem is unique, then obviously the uniqueness set is the whole of $\bG$; when the solution is not unique, the size of the uniqueness set, however,  falls by at least a dimension. More precisely, {\em for a solvable data in $\bG$, the uniqueness set coincides with an algebraic variety (i.e., the common zero set of a collection of polynomials) in $\bG$}. Moreover, for a solvable data, there is a canonical way to construct the algebraic variety - see Observation \ref{O:UV} for a brief discussion. The unique algebraic variety will be referred to as the {\em uniqueness variety}.  The driving force producing the uniqueness variety is the following result.
\begin{thm}\label{T:RatSol}
There is always a rational solution to a solvable Pick problem in $\bG$. Moreover, the rational solution can be obtained so that it is unimodular a.e.\ (with respect to the Lebesgue measure) on the symmetrized torus, i.e.,
\begin{align}\label{distG}
b\bG=\{(z_1+z_2,z_1z_2):z_1,z_2\in\bT\}.
\end{align}
\end{thm}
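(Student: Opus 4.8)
The plan is to obtain the interpolant from a \emph{finite-dimensional unitary realization}, mirroring the classical passage ``solvable Pick matrix $\Rightarrow$ finite Blaschke product''. The starting point is the solvability criterion, Theorem~\ref{T:SolveG}: since $\fD$ is solvable, there is an admissible kernel $k$ on the node set $\{\lambda_1,\dots,\lambda_N\}$---an $N\times N$ positive semi-definite matrix subject to the operator inequalities defining admissibility for $\bG$---for which the interpolation positivity $\big[(1-w_i\overline{w_j})\,k(\lambda_i,\lambda_j)\big]_{i,j}\succeq 0$ holds. These are all finite-dimensional conditions, so the set of such suitably normalised kernels is a non-empty compact convex set of psd matrices; I would fix one, say $k_0$, preferably an extreme point of it. Running the lurking-isometry argument underlying the realisation formula for $\mathcal S(\bG)$ (as in \cite{AY-R,BS}) on the finite data $k_0$ produces a colligation $\sbm{a & B\\ C & D}$ on $\bC\oplus\cM$ with $\cM$ \emph{finite-dimensional}, together with the $\bG$-model operator $Z(s,p)\in\cB(\cM)$---a rational $\cB(\cM)$-valued function of $(s,p)$ built from the test functions $\Phi_\alpha(s,p)=\tfrac{2\alpha p-s}{2-\alpha s}$---such that
\[
f(s,p)=a+B\,Z(s,p)\big(I-D\,Z(s,p)\big)^{-1}C
\]
lies in $\mathcal S(\bG)$ (the formula makes sense on all of $\bG$ because $\|Z(s,p)\|=\sup_{\alpha\in\bT}|\Phi_\alpha(s,p)|<1$ there) and satisfies $f(\lambda_j)=w_j$ for every $j$. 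Since $\cM$ is finite-dimensional and $Z(\cdot)$ is rational, $f$ is a rational function of $(s,p)$, which settles the first assertion.

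To get unimodularity on $b\bG$ I would arrange the colligation to be \emph{unitary}---either via the extremality of $k_0$ or, more robustly, by replacing the contractive colligation just produced with a finite-dimensional unitary dilation of it, which enlarges $\cM$ only by a finite-dimensional summand and can be chosen not to disturb the interpolation conditions. Granting a unitary $\sbm{a & B\\ C & D}$, the usual feedback computation shows that whenever $Z$ is unitary and $I-DZ$ invertible, the transfer-function value $a+BZ(I-DZ)^{-1}C$ is unimodular. Now for $(s,p)\in b\bG$, i.e.\ $(s,p)=(z_1+z_2,z_1z_2)$ with $z_1,z_2\in\bT$, each $\Phi_\alpha(s,p)$ is unimodular (a one-line computation), so $Z(s,p)$ is unitary; hence $|f(s,p)|=1$ at every point of $b\bG$ where $I-DZ(s,p)$ is invertible. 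The remaining set is Lebesgue-null on $b\bG$: pulling back through $\pi(z_1,z_2)=(z_1+z_2,z_1z_2)$, it is the $\bT^2$-zero set of the boundary values of $\det\big(I-DZ\circ\pi\big)$, a bounded holomorphic function on $\bD^2$ that is not identically zero (because $I-DZ$ is invertible throughout $\bG$), and such a function vanishes only on a $\bT^2$-null set by the $H^\infty(\bD^2)$ inner--outer theory; its $\pi$-image is then null in $b\bG$. Therefore $|f|=1$ a.e.\ on $b\bG$.

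The step I expect to fight hardest is the finite-dimensionality: the general realisation theorem for $\mathcal S(\bG)$ involves, a priori, the whole continuum $\{\Phi_\alpha\}_{\alpha\in\bT}$ of test functions, so one must argue---through the rank/extreme-point structure of $k_0$, or through an operator model special to $\bG$---that an $N$-point problem has only finitely many surviving degrees of freedom, and one must check that forming a finite unitary dilation preserves $f(\lambda_j)=w_j$. A related, smaller nuisance is confirming that the resulting rational $f$ does not have its whole pole locus inside $b\bG$; the totally-real character of $\bT^2$ together with surjectivity of $\pi$ handles this but deserves its own line. If the realisation bookkeeping turns unwieldy, an alternative is to argue abstractly: the solution set is compact and convex for local uniform convergence, an extreme point of it admits no non-trivial Schur perturbation and is hence unimodular a.e.\ on $b\bG$, and a separate Pick-rank/degree count shows an extreme point is rational---and substituting a unimodular constant for the free Schur parameter in the Nevanlinna-type linear-fractional parametrisation of all interpolants produces exactly such an $f$.
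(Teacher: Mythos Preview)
Your overall architecture---produce a finite-dimensional unitary realization and read off a rational inner interpolant from its transfer function---matches the paper's final steps exactly (the lurking isometry in the proof of Theorem~\ref{T:RS}, followed by (RF)$\Rightarrow$(RI) of Theorem~\ref{T:SymRatInn}). The gap, which you correctly self-diagnose, is how to obtain the finite-dimensional input for that step. Fixing an admissible kernel $k_0$ on the node set yields only the positivity $\big[(1-w_i\overline{w_j})k_0(\lambda_i,\lambda_j)\big]\succeq 0$; it does not hand you a decomposition
\[
1-w_i\overline{w_j}=\big\langle\big(I-\varphi(\tau,s_i,p_i)^*\varphi(\tau,s_j,p_j)\big)F_j,\,F_i\big\rangle
\]
with a \emph{finite-dimensional} unitary $\tau$, which is what the lurking isometry actually consumes. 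The $\bG$-realization theory (Theorem~\ref{T:RFSymmDsk}) guarantees such a decomposition only with $\tau$ acting on an a priori infinite-dimensional space, and none of your proposed remedies closes that gap: extremality of $k_0$ among admissible kernels says nothing about $\dim\cH$; a unitary dilation of the colligation presupposes you already have a finite-dimensional contractive one in hand; and the assertion that extreme points of the $\bG$-interpolant set are rational is itself at least as hard as the theorem you are trying to prove.

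The paper's resolution is to \emph{lift to $\bD^2$}. One pulls the nodes back through $\pi$ to a symmetric (at most $2N$-point) problem in $\bD^2$; this is solvable, so by the known bidisk result \cite[Corollary~2.13]{AM-Bidisc} it has a rational inner solution $\Psi$, and hence a finite-dimensional Agler decomposition by Theorem~\ref{T:BDisk}. Restricting to the finite symmetric node set $\mathbb F=\{z_j,z_j^\sigma\}$, the doubly symmetric function $I-\Psi(w)^*\Psi(z)$ is then converted by Lemma~\ref{L:DoubSym} into a $\bG$-decomposition with a \emph{finite} unitary $\tau$ on $\bC^{d_1+d_2}$. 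Only at that point does the lurking isometry run, producing a finite unitary colligation and thus a rational inner interpolant on $\bG$. The finite-dimensionality is imported from the bidisk rather than manufactured inside $\bG$; this detour through $\bD^2$ and Lemma~\ref{L:DoubSym} is the missing idea in your proposal.
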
The statement remains true even when the target data $w_j$ are square matrices; in this case, the word `unimodular' is replaced by `unitary matrices'. The proof of Theorem \ref{T:RatSol} makes crucial use of the realization theory for bounded analytic functions on $\bG$. The realization theory for $\bG$ is developed in the recent papers \cite{AY-R} and \cite{BS} via different approaches; we borrow the technique from \cite{AY-R} to prove Theorem \ref{T:RatSol}. We shall detail these topics and prove Theorem \ref{T:RatSol} in Section \ref{S:RationalFunctions}.

A certain type of algebraic varieties - starting with the work of Rudin \cite{RudinDist} - has been studied by various mathematicians around the globe from the operator theoretic point of view \cite{AM_Acta, AM_Erratum, BV01, BKS-APDE, DGH_IEOT, DasSauPAMS, DJM'16, Dis-Adv, PalIUMJ}, the function theoretic point of view \cite{DGH_IEOT, JKM'12, Knese'07, DS-1} and the geometric point of view \cite{Knese-TAMS2010, Vegulla}.
\begin{definition}
Given a bounded domain $\Omega$ in $\bC^2$, a {\em $\Omega$-distinguished variety} is the zero set of a two-variable polynomial $\xi$ such that
\begin{align}\label{dist}
Z(\xi)\cap\Omega\neq\emptyset \quad\text{and}\quad Z(\xi)\cap\partial\Omega=Z(\xi)\cap b\Omega,
\end{align}where $Z(\xi)$ is the zero set of $\xi$, $\partial\Omega$ is the topological boundary of $\Omega$ and $b\Omega$ is the distinguished boundary of $\Omega$ (i.e., the $\check{\text S}$ilov boundary with respect to the uniform algebra $\cA(\Omega)$ of functions holomorphic in $\Omega$ and continuous on $\overline\Omega$ - see \cite[Chapter 9]{AW}).
\end{definition}
This terminology is due to Agler and McCarthy \cite{AM_Acta}, where they were inexorably led to $\bD^2$-distinguished varieties while studying bivariate matrices; see also the work \cite{DS_JFA}. The distinguished boundary of $\bG$ is the symmetrized torus \eqref{distG} - see \cite[Theorem 2.4]{ay-jga}. Since symmetrized bidisk is the only domain with respect to which the distinguished varieties will be studied in this paper, we omit the mention of the domain from here on when referring to a distinguished variety with respect to $\bG$.

The first main result of this paper is the following.
\begin{thm}\label{T:SandIntro}
Let $\fD=\{(\lambda_j,w_j):1\leq j\leq N\}$ be an extremal Pick problem in $\bG$ such that none of the $(N-1)$-point subproblems is extremal. Then the uniqueness variety contains a distinguished variety that contains the initial nodes.
\end{thm}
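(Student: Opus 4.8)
The plan is to manufacture the distinguished variety from an \emph{extremal kernel} and then show that the problem restricted to it has a unique solution. First I would combine the solvability criterion of Theorem~\ref{T:SolveG} with extremality. Solvability says that the Pick matrix $\big[(1-w_i\overline{w_j})\,k(\lambda_i,\lambda_j)\big]_{i,j=1}^N$ is positive semidefinite for every admissible kernel $k$ on $\bG$, while extremality forbids the corresponding strict inequality to hold uniformly over the (normalized) cone of admissible kernels. A compactness argument, using continuity of the smallest eigenvalue on this cone, then yields an admissible kernel $k_0$ for which $M_{k_0}:=\big[(1-w_i\overline{w_j})\,k_0(\lambda_i,\lambda_j)\big]$ is positive semidefinite but \emph{singular} (after a routine reduction one may also assume $[k_0(\lambda_i,\lambda_j)]$ is invertible). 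The hypothesis that no $(N-1)$-point subproblem is extremal is then used to control $\ker M_{k_0}$: if some null vector $v$ of $M_{k_0}$ had a vanishing coordinate, or if $\dim\ker M_{k_0}\ge 2$, a suitable linear combination would give a null vector supported on a proper subset of the nodes, so the corresponding principal submatrix of $M_{k_0}$ would be a singular Pick matrix for an admissible kernel of that subproblem, forcing it to be extremal --- a contradiction. Hence $\ker M_{k_0}$ is one-dimensional, spanned by some $v=(v_1,\dots,v_N)$ with $v_j\neq 0$ for all $j$.

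Next I would pass from $(k_0,v)$ to an algebraic object. Let $\phi$ be a rational, $\bG$-inner solution of $\fD$, as provided by Theorem~\ref{T:RatSol}. Expanding the admissible kernel $k_0$ through the magic functions $\Phi_\omega(s,p)=\frac{2\omega p-s}{2-\omega s}$, $\omega\in\bT$ (equivalently, through the $\bG$-realization model), the identities $M_{k_0}v=0$ and $\langle M_{k_0}v,v\rangle=0$ compress the whole picture onto a subspace and produce a finite-dimensional minimal unitary colligation adapted to the structure of $\bG$; its characteristic polynomial $\xi$ has the properties that $\lambda_j\in Z(\xi)$ for every $j$ (since $\phi$ interpolates $\fD$ and $v_j\neq0$ for all $j$) and that $\phi$ restricts to an inner function on $Z(\xi)\cap\bG$, so that $Z(\xi)\cap\partial\bG=Z(\xi)\cap b\bG$. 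By the algebraic and geometric characterization of distinguished varieties obtained earlier in the paper, $Z(\xi)$ then contains a distinguished variety $W$ with respect to $\bG$, and the nonvanishing of all the $v_j$ guarantees that $W$ contains every initial node $\lambda_j$.

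Finally I would place $W$ inside the uniqueness set. Being one-dimensional and distinguished, $W$ meets $\partial\bG$ only along $b\bG$, so the data $\fD$ restricted to $W$ is a finite interpolation problem on $W$ whose natural Pick matrix coincides, at the nodes, with $M_{k_0}$ (because $k_0$ agrees there with the reproducing kernel attached to $W$); its singularity says the restricted problem is extremal, and by the rigidity of extremal interpolation on a one-dimensional distinguished variety it has a unique solution on $W$, namely $\phi|_W$. Therefore every solution of $\fD$ on $\bG$ agrees with $\phi$ on all of $W$, so $W$ lies in the uniqueness set, hence in the uniqueness variety, which by Observation~\ref{O:UV} is its algebraic hull; since $W$ contains the $\lambda_j$, this is exactly the claim. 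The technical heart of the argument, which I expect to be the main obstacle, is the middle step: converting the analytic datum $(k_0,v)$ into an honest polynomial $\xi$, checking that $Z(\xi)$ is genuinely one-dimensional and exits through $b\bG$ rather than being tangent to $\partial\bG$ or carrying spurious components, and at the same time ensuring that the colligation is finite-dimensional. This is precisely where the paper's complete characterization of distinguished varieties of $\bG$ is needed, and where extremality together with the subproblem hypothesis must be leveraged to force minimality of the model.
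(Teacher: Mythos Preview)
Your overall architecture---find an active admissible kernel, extract a distinguished variety from it, then show every interpolant is determined on that variety---matches the paper's. The first step (existence of an active kernel with a null vector having no zero coordinate) is essentially the paper's Lemma~\ref{L:Active} plus the minimality argument, and you have that right. But the remaining two steps, as you have written them, contain genuine gaps.

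First, your passage from $(k_0,v)$ to a distinguished variety is too vague to be a proof. You speak of a ``finite-dimensional minimal unitary colligation'' whose ``characteristic polynomial'' cuts out the variety, and of $\phi$ being inner on $Z(\xi)\cap\bG$ forcing $Z(\xi)$ to exit through $b\bG$; neither of these deductions is justified, and the second is simply false without further argument (a rational inner function on $\bG$ is inner on \emph{every} subvariety, distinguished or not). The paper does something more concrete: it regards $(M_s,M_p)$ on $H(k_0)$ as a $\Gamma$-contraction, takes its fundamental operator $F$, and sets $\cW=\cW_F=\{(s,p):\det(F^*+pF-sI)=0\}$. The completely non-unitary part of $F$ is what guarantees, via Theorem~\ref{T:DistMain}, that $\cW_F$ is distinguished, and an explicit eigenvector computation shows each node lies on $\cW_F$. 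More importantly, the paper does not stop there: it \emph{extends} $k_0$ to an admissible kernel $K$ on all of $\cW_F\cap\bG$ (Lemma~\ref{L:AdmisExt}), and the form of this extension---kernel vectors $u(s,p)\in\ker(F+\overline pF^*-\overline sI)$---is what drives the final step. Your assertion that ``$k_0$ agrees there with the reproducing kernel attached to $W$'' presupposes exactly this extension, which you have not constructed.

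Second, and more seriously, your last step invokes ``rigidity of extremal interpolation on a one-dimensional distinguished variety'' as a black box. No such result is available, and in this generality it would be essentially equivalent to what you are trying to prove. The paper proceeds instead by an explicit formula: for any $(s_{N+1},p_{N+1})\in\cW\cap\bG$ and any interpolant value $w_{N+1}$ there, positivity of the enlarged Pick matrix against the vector $(\gamma,\delta)$ forces
\[
w_{N+1}=\frac{\sum_{j=1}^N K_{N+1,j}\gamma_j}{\sum_{j=1}^N \overline{w_j}K_{N+1,j}\gamma_j},
\]
provided the denominator is nonzero. The entire difficulty is then to rule out the denominator vanishing identically on every sheet through a node, and this is where the paper spends real effort: item~(iii) of Lemma~\ref{L:AdmisExt} uses perturbation theory for the eigenvalue problem of $z\mapsto F+zF^*$ to produce sequences along sheets with controlled kernel vectors, so that if the denominator did vanish one would get $\sum_j\overline{w_j}K_{1,j}\gamma_j=0$, which in turn contradicts minimality. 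Your proposal contains no replacement for this mechanism.
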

We give two different proofs of this theorem. The first proof appeals to a similar result in \cite[Theorem 4.1]{AM_Acta}, where Agler and McCarthy proved a $\bD^2$-version of Theorem \ref{T:SandIntro}. It is natural for readers who are aware of this work to wonder if there is a way to establish Theorem \ref{T:SandIntro} from its $\bD^2$-analogue. The first question that occurs is whether an extremal Pick problem in $\bG$ remains extremal when pulled back to $\bD^2$ via the symmetrization map 
$$
\pi:(z_1,z_2)\mapsto (z_1+z_2,z_1z_2).$$ 
The answer to this question is negative, in general. For example, consider the Pick data $\{((0,0),0),((0,1/2),1/2)\}$ in $\bG$. We shall see in Example \ref{E:Ill} that this problem is extremal. But note that none of the $\bD^2$ Pick problems
 $$
(0,0)\to 0,\;(-\frac{1}{\sqrt{2}}i, \frac{1}{\sqrt{2}}i)\to\frac{1}{2},\quad\quad(0,0)\to 0,\;( \frac{1}{\sqrt{2}}i,- \frac{1}{\sqrt{2}}i)\to\frac{1}{2}$$is extremal because for example the functions $(z_1,z_2)\mapsto \pm \frac{1}{\sqrt{2}}iz_1$ solve the data, respectively and the supremum norm of these two functions are less than one. What is interesting is that the $3$-point $\bD^2$ problem
$$
(0,0)\to 0,\;(-\frac{1}{\sqrt{2}}i, \frac{1}{\sqrt{2}}i)\to\frac{1}{2},\;( \frac{1}{\sqrt{2}}i,- \frac{1}{\sqrt{2}}i)\to\frac{1}{2}
$$is extremal! Indeed, Lemma \ref{L:Added} shows that given an extremal $N$-point Pick problem in $\bG$, one can always obtain an extremal pulled-back problem in $\bD^2$ with a possibly increased number of data. This is the point of departure for the first proof.

With the already rich theory of the symmetrized bidisk and with the nuances in the first proof (that we hope to illustrate in \S \ref{S:TwoProofs}), one desires an independent proof of Theorem \ref{T:SandIntro}. This is what the second proof is about. It could not be done without a thorough understanding of the geometry of the distinguished varieties. Indeed, this paper is as much a study of the uniqueness varieties corresponding to solvable Pick problems as it is about the distinguished varieties.

A {\em numerical contraction} $F$ is a linear operator acting on a Hilbert space $\cH$ whose {\em numerical radius} is not greater than one, i.e.,
\begin{align}\label{NumericRad}
\nu(F):=\sup\{|\langle Fh,h\rangle|: \|h\|=1\}\leq 1.
\end{align}In a remarkable extension of the seminal work \cite{AM_Acta}, Pal and Shalit showed in \cite{Pal-Shalit} that {\em if a $d\times d$ matrix $F$ is a strict numerical contraction (i.e., $\nu(F)<1$), then
\begin{align}\label{DistVarG}
\cW_F:=\{(s,p)\in\bC^2:\det(F^*+pF-sI)=0\}
\end{align}is a distinguished variety.} What about the $\nu(F)=1$ case? Simple examples such as $F=\sbm{0&2\\0&0}$ and $F=\sbm{1&0\\0&1}$ show that $\cW_F$ may or may not be a distinguished variety when $\nu(F)=1$. The mystery grows as one learns the converse direction: {\em every distinguished variety is of the form \eqref{DistVarG} for some matrix $F$ with $\nu(F)\leq 1$ (not strict).} See Theorem 3.5 of \cite{Pal-Shalit} for this result. This rather fortuitous connection between numerical contractions and distinguished varieties stems from an earlier work \cite{BPSR}, where the authors introduced a tool whose contribution to the theory of symmetrized bidisk has been extraordinary.  In an effort to better understand this connection, the recent work \cite{BKS-APDE} showed that in the above characterization of distinguished varieties, the numerical contraction $F$ can be chosen (in both directions) from a considerably smaller class of numerical contractions, viz.,
$$
\{PU+U^*P^\perp: P \text{ an orthogonal projection, }U \text{ a unitary acting on }\bC^d, d\geq1\}.
$$See \cite[Lemmas 3.10 and 6.2]{BKS-APDE} to see that the above is a proper subclass of numerical contractions and \cite[Theorem 6.1]{BKS-APDE} for the refinement of the Pal--Shalit result. The mystery however remained: examples of projections $P$ and unitaries $U$ can be found to show that $\cW_{PU+U^*P^\perp}$ may or may not be distinguished when $\nu(PU+U^*P^\perp)=1$, see Examples 6.4 and 6.5 in \cite{BKS-APDE}.

This paper resolves the issue. If an operator has no reducing subspace where it is unitary, then it is called {\em completely non-unitary (c.n.u.)}. In this paper, the notation $M_d(\bC)$ is used to denote the algebra of $d\times d$ complex matrices.
\begin{thm}\label{T:IG}
Let $F$ in $M_d(\bC)$ be a numerical contraction (not necessarily strict). Then $\cW_F$ as in \eqref{DistVarG} is a distinguished variety if and only if $F$ is completely non-unitary. Moreover, each irreducible component of $\cW_F$ intersects $\bG$, when $F$ is a c.n.u.\ numerical contraction. Conversely, every distinguished variety with each irreducible component intersecting $\bG$ is of the form \eqref{DistVarG} for a c.n.u.\ numerical contraction $F$.
\end{thm}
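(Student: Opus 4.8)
The plan is to pin down the position of $\cW_F$ relative to $\overline{\bG}$ by means of the elementary descriptions $\bG=\{(s,p):|p|<1,\ |s-\bar sp|<1-|p|^2\}$ (this inequality already forces $|s|<2$) and $b\bG=\{(s,p):|p|=1,\ s=\bar sp,\ |s|\le 2\}$. First I would record two facts that hold for \emph{any} numerical contraction $F$. If $(s,p)\in\cW_F$ and $|p|=1$, write $p=\beta^2$ with $|\beta|=1$; then $\det(F^*+pF-sI)=0$ becomes $\det\!\big(2\,\mathrm{Re}(\beta F)-(\bar\beta s)I\big)=0$, so $\bar\beta s$ is an eigenvalue of the self-adjoint operator $2\,\mathrm{Re}(\beta F)$, whose norm is at most $2\nu(F)\le 2$; hence $\bar\beta s\in\mathbb{R}$ with $|\bar\beta s|\le 2$, i.e.\ $s=\bar sp$ and $|s|\le 2$, and therefore $(s,p)\in b\bG$. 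Thus $\cW_F\cap\{|p|=1\}\subseteq b\bG$. If $(s,p)\in\cW_F$ and $|p|<1$, I would take a unit eigenvector $v$ with $(F^*+pF)v=sv$ and set $\mu:=\langle Fv,v\rangle$; then $|\mu|\le\nu(F)\le 1$, $s=\bar\mu+p\mu$, and a one-line computation gives $s-\bar sp=\bar\mu(1-|p|^2)$, so $|s-\bar sp|=|\mu|(1-|p|^2)\le 1-|p|^2$. Hence $(s,p)\in\overline{\bG}$, and $(s,p)\in\bG$ unless $|\mu|=1$.

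The crucial step is to show that this exceptional case $|\mu|=1$ is exactly what obstructs complete non-unitarity. Suppose $(F^*+pF)v=sv$ with $\|v\|=1$, $|p|<1$, and $\mu=\langle Fv,v\rangle$ of modulus one. Put $G:=\bar\mu F$, so $\nu(G)\le 1$ and $\langle Gv,v\rangle=1$. Since $\mathrm{Re}(G)$ is self-adjoint with $\|\mathrm{Re}(G)\|\le\nu(G)\le 1$ and $\langle(I-\mathrm{Re}(G))v,v\rangle=0$ with $I-\mathrm{Re}(G)\ge 0$, one gets $\mathrm{Re}(G)v=v$, hence $G^*v=2v-Gv$ and $Gv-v\perp v$. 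Writing $Gv=v+w$ with $w\perp v$ and substituting $F=\mu G$, $F^*=\bar\mu G^*$ into $(F^*+pF)v=(\bar\mu+p\mu)v$, the equation reduces to $(p\mu-\bar\mu)w=0$; since $|p|<1$ forces $p\mu\ne\bar\mu$, we get $w=0$, i.e.\ $Fv=\mu v$ and $F^*v=\bar\mu v$. Thus $\mathrm{span}\{v\}$ reduces $F$ and $F$ is unitary on it, so $F$ is not c.n.u.; the converse implication is immediate, since an eigenvector inside a reducing subspace on which $F$ is unitary provides such a $v$. Combining this with the previous paragraph: if $F$ is a c.n.u.\ numerical contraction then $\cW_F\cap\{|p|<1\}\subseteq\bG$ and $\cW_F\cap\{|p|=1\}\subseteq b\bG$, so $\cW_F\cap\partial\bG=\cW_F\cap b\bG$.

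It then remains to see that $\cW_F$ (for $F$ c.n.u.) genuinely meets $\bG$, indeed componentwise. As $\xi_F(s,p):=\det(F^*+pF-sI)$ is a nonzero polynomial, $\cW_F$ is a plane curve, and on each irreducible component $V$ the coordinate $p$ is non-constant -- otherwise $p\equiv c$ on $V$ would force $V=\{p=c\}$ (both being irreducible curves), hence $\det(F^*+cF-sI)\equiv 0$ in $s$, which is absurd. Therefore $p(V)$ is cofinite in $\mathbb{C}$, so it contains values of modulus less than one, and any point of $V$ lying over such a value belongs to $\cW_F\cap\{|p|<1\}\subseteq\bG$. This shows that $\cW_F$ is a distinguished variety each of whose irreducible components meets $\bG$. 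For the converse, given a distinguished variety $W$, I would use \cite[Theorem 3.5]{Pal-Shalit} to write $W=\cW_F$ with $\nu(F)\le 1$, split the underlying space as $\cH_u\oplus\cH_u^\perp$ along the largest reducing subspace $\cH_u$ of $F$ on which $F$ is unitary (it exists since the sum of two such subspaces is again one), and write $F=F_u\oplus F_c$ accordingly, so that $\cW_F=\cW_{F_u}\cup\cW_{F_c}$. If $\cH_u\ne\{0\}$, an eigenvalue $\mu$ of $F_u$ (necessarily of modulus one) produces the line $\{(\bar\mu+p\mu,p):p\in\bC\}\subseteq\cW_{F_u}\subseteq W$, which for $|p|<1$ satisfies $|s-\bar sp|=1-|p|^2$ and hence meets $\partial\bG\setminus b\bG$ -- contradicting that $W$ is distinguished. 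So $\cH_u=\{0\}$ and $F$ is c.n.u., which gives the converse.

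I expect the genuine obstacle to be the middle step: promoting the pointwise constraint $|\langle Fv,v\rangle|=1$, for $v$ an eigenvector of $F^*+pF$ with $|p|<1$, to a one-dimensional reducing subspace on which $F$ acts unitarily. Once that is in place, the rest is either elementary geometry of $\overline{\bG}$ and $b\bG$ or a standard structural input -- the Pal--Shalit realization of distinguished varieties and the unitary / completely-non-unitary splitting of a numerical contraction.
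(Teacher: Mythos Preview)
Your argument for the equivalence ``$\cW_F$ is distinguished $\Leftrightarrow$ $F$ is c.n.u.'' and for the statement that every irreducible component of $\cW_F$ meets $\bG$ is correct, and it is genuinely more elementary than the paper's. The paper establishes these facts by invoking the structural result (Theorem~\ref{T:BndryEgenvle}) that an eigenvalue on $\partial W(F)$ forces $F$ to split off a scalar block, together with an auxiliary lemma about the pencil $F^*+pF$. You bypass both: from $|\langle Fv,v\rangle|=1$ you extract $\operatorname{Re}(\bar\mu F)v=v$ directly via positivity of $I-\operatorname{Re}(\bar\mu F)$, and then the eigenvector equation $(F^*+pF)v=sv$ with $|p|<1$ kills the ``off-diagonal'' piece $w$, giving $Fv=\mu v$ and $F^*v=\bar\mu v$ at once. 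Your proof that each component meets $\bG$ (via non-constancy of $p$ on components) is also more direct than the paper's route through the containment $\cW_F\subset\pi(\bD^2\cup\bT^2\cup\bE^2)$. The trade-off is that the paper's approach yields that stronger containment as a byproduct (their item~(3) of Theorem~\ref{T:DistMain}), while yours only produces what is needed for the stated theorem.

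There is, however, a gap in your treatment of the final converse. The Pal--Shalit result you cite does \emph{not} give $W=\cW_F$ as varieties; it only gives $W\cap\bG=\cW_F\cap\bG$. Consequently, your contradiction argument fails: if $F$ has a nonzero unitary part $F_u$ with eigenvalue $\mu\in\bT$, the line $\{(\bar\mu+p\mu,p):p\in\bC\}$ lies in $\cW_{F_u}\subseteq\cW_F$, but since this line never meets $\bG$ you cannot conclude it lies in $W$, and so no contradiction with $W$ being distinguished arises. The repair is the one the paper uses: do not try to prove that $F$ is already c.n.u., but simply \emph{replace} $F$ by its c.n.u.\ part $F_c$. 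Since the unitary part contributes nothing inside $\bG$, one still has $W\cap\bG=\cW_{F_c}\cap\bG$. Now both $W$ (by hypothesis) and $\cW_{F_c}$ (by what you proved) have every irreducible component meeting $\bG$; two irreducible plane curves that agree on a nonempty open subset of $\bC^2$ coincide, so matching components on $\bG$ forces $W=\cW_{F_c}$.
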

This result is contained in Theorem \ref{T:DistMain} - the second main result of this paper -  that exhibits some other geometric equivalent conditions for $\cW_F$ to be a distinguished variety. This result in turn gives a new proof of Proposition 4.1 of \cite{Knese-TAMS2010}, where Knese produces an elegant geometric characterization of distinguished varieties with respect to $\bD^2$.

We then characterize the pairs $(P,U)$ of orthogonal projections $P$ and unitary $U$ acting on finite-dimensional Hilbert spaces so that $PU+U^*P^\perp$ is completely non-unitary. This in turn gives a $(P,U)$-version of Theorem \ref{T:IG} -  see Theorem \ref{T:PUversion}.

Finally, it is a pleasure to thank Professor John E.\ McCarthy profusely for his generous help in understanding some perturbation theory for matrices that plays a crucial role in the proof of Theorem \ref{T:SandIntro}. The authors also thank Professor Tirthankar Bhattacharyya for some insightful discussions at the beginning of this project.

\section{Distinguished varieties and numerical contractions}\label{S:DistVar}
The purpose of this section is to explore the connection between numerical contractions and distinguished varieties. We begin with the following well-known result about the numerical range of a matrix $F$, i.e., the set
 $$
 W(F):=\{\langle Fh,h\rangle: \|h\|=1 \}.
 $$
\begin{thm}[See Theorem 5.1-9 in \cite{GustavsonRao}]\label{T:BndryEgenvle}
If an eigenvalue $\beta$ of a $d\times d$ matrix $F$ belongs to $\partial W(F)$, the topological boundary of $W(F)$, then $F$ is unitarily equivalent to $\sbm{\beta I_{\bC^r}&0\\0& F'}$, $r\leq d$ such that $\beta$ is not an eigenvalue of $F'$.
\end{thm}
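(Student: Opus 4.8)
The plan is to reduce to the case $\beta=0$ and then exploit convexity of the numerical range together with a one-line positivity argument. Replacing $F$ by $F-\beta I$ translates $W(F)$ by $-\beta$ and sends the eigenvalue $\beta$ to $0$, so we may assume $\beta=0\in\partial W(F)$, with $\ker(F-\beta I)$ becoming $V:=\ker F\neq\{0\}$. By the Toeplitz--Hausdorff theorem, $W(F)$ is a compact convex subset of $\bC\cong\bR^2$, hence it has a supporting line at its boundary point $0$: there is a $\theta$ with $\mathrm{Re}(e^{i\theta}z)\geq 0$ for all $z\in W(F)$. Put $G=e^{i\theta}F$, so that $W(G)=e^{i\theta}W(F)\subseteq\{z:\mathrm{Re}\,z\geq 0\}$; equivalently $\mathrm{Re}\langle Gh,h\rangle\geq 0$ for every $h$, i.e. the self-adjoint matrix $G+G^*$ is positive semi-definite. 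Note $G$ and $F$ have the same kernel $V$, and $G\cong\sbm{\beta I&0\\0&G'}$ will translate back to the desired form for $F$.

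The key step is the claim that $V\subseteq\ker G^*$ as well. Indeed, for a unit vector $h_0\in V$ we have $\langle(G+G^*)h_0,h_0\rangle=2\,\mathrm{Re}\langle Gh_0,h_0\rangle=0$; since $G+G^*$ is positive semi-definite, any vector annihilating its quadratic form lies in its kernel (write $G+G^*=T^*T$ and observe $\|Th_0\|^2=0$). Hence $(G+G^*)h_0=0$, so $G^*h_0=-Gh_0=0$, and consequently $V\subseteq\ker G\cap\ker G^*$, i.e. $V\subseteq\ker F\cap\ker F^*$. This is precisely the statement that the eigenvectors for the boundary eigenvalue $\beta$ behave ``normally''.

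It now follows that $V$ reduces $F$: it is $F$-invariant because $FV=\{0\}$, and $V^\perp$ is $F$-invariant because $\langle Fw,v\rangle=\langle w,F^*v\rangle=0$ for all $w\in V^\perp$, $v\in V$. Writing $F$ with respect to $\cH=V\oplus V^\perp$ gives $F\cong\sbm{\beta I_V&0\\0& F'}$ with $F'=F|_{V^\perp}$ and $r=\dim V\leq d$. Finally, $\beta$ is not an eigenvalue of $F'$: if $F'w=\beta w$ with $w\in V^\perp$, then $(F-\beta I)w=0$, so $w\in\ker(F-\beta I)=V$, forcing $w=0$.

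The only genuine input beyond elementary operator theory is the supporting line at a boundary point of $W(F)$, which is exactly where the Toeplitz--Hausdorff convexity of the numerical range is used; everything else is a short positivity computation. I would highlight as the conceptual point — rather than an obstacle — that no induction on $d$ is needed: peeling off the \emph{entire} eigenspace $\ker(F-\beta I)$ at once already yields an $F'$ without $\beta$ as an eigenvalue, because a boundary eigenvalue automatically forces $\ker(F-\beta I)\subseteq\ker(F^*-\overline\beta I)$.
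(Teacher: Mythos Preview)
Your proof is correct. Note, however, that the paper does not supply its own proof of this statement: Theorem~\ref{T:BndryEgenvle} is quoted from \cite[Theorem 5.1-9]{GustavsonRao} and used as a black box, so there is no in-paper argument to compare against. Your argument---translate so that $\beta=0$, use Toeplitz--Hausdorff convexity to obtain a supporting half-plane and hence $G+G^*\succeq 0$ after rotation, then use the positive semi-definiteness to force $\ker G\subseteq\ker G^*$---is essentially the standard proof one finds in the numerical-range literature (and indeed in \cite{GustavsonRao}). The observation that the full eigenspace can be split off in one shot, avoiding any induction on $d$, is exactly the content of the cited result.
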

This result has an important consequence: Consider the spectrum $\sigma(F)$ of a numerical contraction $F$. If it contains a unimodular member $\beta_1$, then it must belong to the boundary of $W(F)$ and therefore by Theorem \ref{T:BndryEgenvle}, $F$ must be unitarily equivalent to a matrix of the form $\sbm{\beta_1I_{\bC^{r_1}}&0\\0& F_1}$. Now consider $\sigma(F_1)$ and repeat the process until we arrive at a matrix $F'$ whose spectrum does not contain any unimodular member. Note that $F'$ would then obviously be a c.n.u.\ numerical contraction. Consequently, we have the following result, which is perhaps well-known.
\begin{proposition}\label{Obs:Decomposition}
  Every numerical contraction acting on a finite-dimensional Hilbert space decomposes into a direct sum of a unitary and a c.n.u.\ numerical contraction.
\end{proposition}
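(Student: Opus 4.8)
The plan is to make precise the peeling procedure described in the paragraph preceding the statement. Let $F$ be a numerical contraction acting on a finite-dimensional Hilbert space $\cH$, so that $\nu(F)\leq 1$ is equivalent to $W(F)\subseteq\overline{\bD}$. The whole argument is an induction on $\dim\cH$ driven by Theorem \ref{T:BndryEgenvle}.

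First I would record the elementary observation that every unimodular eigenvalue of $F$ is a boundary point of $W(F)$: if $\beta\in\sigma(F)$ with $|\beta|=1$, then $\beta\in W(F)\subseteq\overline{\bD}$, and since $\overline{\bD}$ contains no neighbourhood of a point of the unit circle, $\beta$ cannot lie in the interior of $W(F)$; hence $\beta\in\partial W(F)$. Theorem \ref{T:BndryEgenvle} then produces a unitary decomposition $F\cong \beta I_{\bC^{r}}\oplus F'$ with $\beta\notin\sigma(F')$. The scalar block $\beta I_{\bC^{r}}$ is unitary, and $W(F')\subseteq W(F)\subseteq\overline{\bD}$ because the numerical range of a direct summand is the numerical range of a compression of $F$ and hence is contained in $W(F)$; thus $F'$ is again a numerical contraction.

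Next I would iterate. Starting from $F_0:=F$, as long as $\sigma(F_j)\cap\bT\neq\emptyset$ I pick $\beta_{j+1}$ in this set and apply the previous step to split off a block $\beta_{j+1}I_{\bC^{r_{j+1}}}$, leaving a numerical contraction $F_{j+1}$ with $\beta_{j+1}\notin\sigma(F_{j+1})$ and $\dim F_{j+1}<\dim F_j$ (since $r_{j+1}\geq 1$). Because $\dim\cH<\infty$, the process terminates after finitely many steps, say $k$, with $\sigma(F_k)\cap\bT=\emptyset$. Collecting the pieces gives
\begin{align*}
F\;\cong\;\bigl(\beta_1 I_{\bC^{r_1}}\oplus\cdots\oplus\beta_k I_{\bC^{r_k}}\bigr)\;\oplus\;F_k\;=\;U\oplus F_k,
\end{align*}
where $U$ is a direct sum of unimodular scalars, hence a unitary (acting on the zero space if $k=0$), and $F_k$ is a numerical contraction whose spectrum is disjoint from $\bT$.

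It then remains to verify that $F_k$ is completely non-unitary. Suppose $\cM$ is a reducing subspace for $F_k$ on which $F_k|_{\cM}$ is unitary. A unitary operator on a finite-dimensional space has spectrum contained in $\bT$, so $\sigma(F_k|_{\cM})\subseteq\bT$; on the other hand, since $\cM$ reduces $F_k$ we have $\sigma(F_k|_{\cM})\subseteq\sigma(F_k)$, which is disjoint from $\bT$. Hence $\cM=\{0\}$, i.e., $F_k$ is c.n.u., and $F\cong U\oplus F_k$ is the desired decomposition. The only points requiring care are the two uses of finite-dimensionality—for the termination of the iteration and for the spectral description of finite-dimensional unitaries—together with the observation that a unimodular eigenvalue of a numerical contraction lies on the boundary of its numerical range; none of these is a genuine obstacle, so I do not expect any serious difficulty in carrying out this plan.
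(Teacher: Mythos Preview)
Your proposal is correct and follows essentially the same approach as the paper: the paper's proof is precisely the peeling procedure sketched in the paragraph preceding the proposition, and you have faithfully made that sketch precise, adding the explicit verifications that a unimodular eigenvalue lies on $\partial W(F)$ and that the terminal piece $F_k$ is completely non-unitary.
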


For a square matrix $F$ and a complex number $p$, we are interested in the linear pencil $F^*+pF$. We begin with a couple of elementary facts about these linear pencils each of which makes a crucial contribution to the proof of the main result of this section.
\begin{lemma}\label{L:Auxiliary}
Let $F$ be a $d\times d$ matrix and $p\in\mathbb C$ be such that $|p|\neq 1$.
\begin{itemize}
\item[(1)]For $0\leq r\leq d$, $F=\sbm{\alpha I_{\mathbb C^r} & 0\\ 0 & A}$ if and only if
$F^* + p F=\sbm{\mu I_{\mathbb C^r} & 0\\0 & B}$;
\item[(2)]If $F$ is a numerical contraction, then
$$W(F^* + p F)\cap \{\beta + \bar{\beta}p: |\beta |=1\} \subseteq \partial W(F^* + p F).$$
\end{itemize}
\end{lemma}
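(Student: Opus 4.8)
The plan is to reduce both parts to one elementary fact about the pencil map: when $|p|\neq 1$, the $\mathbb{R}$-linear map $\Phi_p\colon X\mapsto X^*+pX$ on $M_d(\bC)$ is invertible, with inverse $\Phi_p^{-1}(Y)=\dfrac{\bar pY-Y^*}{|p|^2-1}$. This is a one-line check: from $Y=X^*+pX$ one gets $Y^*=X+\bar pX^*$, hence $\bar pY-Y^*=(|p|^2-1)X$. Everything in Lemma~\ref{L:Auxiliary} will follow from this.

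\emph{Part (1).} The forward direction is immediate: if $F=\sbm{\alpha I_{\mathbb C^r}&0\\0&A}$ then $F^*+pF=\sbm{(\bar\alpha+p\alpha)I_{\mathbb C^r}&0\\0&A^*+pA}$, so $\mu=\bar\alpha+p\alpha$ and $B=A^*+pA$ do the job. For the converse I would apply $\Phi_p^{-1}$ block by block: if $F^*+pF=\sbm{\mu I_{\mathbb C^r}&0\\0&B}$, then $F=\Phi_p^{-1}(F^*+pF)=\dfrac{1}{|p|^2-1}\big(\bar p(F^*+pF)-(F^*+pF)^*\big)$, which is block diagonal of the form $\sbm{\alpha I_{\mathbb C^r}&0\\0&A}$ with $\alpha=\dfrac{\bar p\mu-\bar\mu}{|p|^2-1}$ and $A=\dfrac{\bar pB-B^*}{|p|^2-1}$. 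A one-line substitution ($\bar p\mu-\bar\mu=(|p|^2-1)\alpha$ when $\mu=\bar\alpha+p\alpha$) confirms that the two block descriptions correspond, so the equivalence is genuine rather than merely a one-sided implication.

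\emph{Part (2).} I would first pass to the scalar level. Since $\langle(F^*+pF)h,h\rangle=\overline{\langle Fh,h\rangle}+p\langle Fh,h\rangle$, the map $\psi(z):=\bar z+pz$ satisfies $W(F^*+pF)=\psi\big(W(F)\big)$. Identifying $\bC$ with $\mathbb{R}^2$, $\psi$ has real determinant $|p|^2-1\neq0$, so $\psi$ is a linear homeomorphism of $\bC$; in particular $\psi$ commutes with taking topological boundary and with intersections. Next, for $|\beta|=1$ we have $\beta+\bar\beta p=\psi(\bar\beta)$, so $\{\beta+\bar\beta p:|\beta|=1\}=\psi(\bT)$, where $\bT$ denotes the unit circle. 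Hence $W(F^*+pF)\cap\{\beta+\bar\beta p:|\beta|=1\}=\psi\big(W(F)\big)\cap\psi(\bT)=\psi\big(W(F)\cap\bT\big)$. Finally, $F$ being a numerical contraction means $W(F)\subseteq\overline{\bD}$, and any point of a set contained in $\overline{\bD}$ that lies on $\bT=\partial\overline{\bD}$ must be a boundary point of that set; thus $W(F)\cap\bT\subseteq\partial W(F)$, and applying $\psi$ yields $W(F^*+pF)\cap\{\beta+\bar\beta p:|\beta|=1\}\subseteq\psi\big(\partial W(F)\big)=\partial\psi\big(W(F)\big)=\partial W(F^*+pF)$, as claimed.

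There is no deep obstacle here; the point that needs the most care is keeping track of where the hypothesis $|p|\neq1$ is used, which in both parts is exactly the non-vanishing of $|p|^2-1$ — the invertibility of $\Phi_p$ in part (1), and the invertibility (hence homeomorphy) of $\psi$ in part (2). I would also double-check the set identity $\{\beta+\bar\beta p:|\beta|=1\}=\psi(\bT)$ and the fact that intersections may be pushed through the injective map $\psi$, since that manipulation is precisely what turns ``lies on the unimodular slice'' into ``lies on $\partial W(F^*+pF)$''.
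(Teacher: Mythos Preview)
Your proof is correct, and it is organized around a single, clearly stated principle: the $\mathbb{R}$-linear map $\psi(z)=\bar z+pz$ (and its matrix lift $\Phi_p(X)=X^*+pX$) is invertible precisely when $|p|\neq1$. The paper uses the same underlying fact but deploys it locally rather than globally. For part~(1) the paper argues entry by entry, showing that $\langle(F^*+pF)u,v\rangle=0=\langle(F^*+pF)v,u\rangle$ forces $\langle Fu,v\rangle=\langle Fv,u\rangle=0$; you instead invert $\Phi_p$ once and read the block structure of $F$ off directly. For part~(2) the paper exhibits, for each $\epsilon>0$, an explicit point $(1+\delta/2)(\beta+\bar\beta p)$ near $\beta+\bar\beta p$ and checks it cannot lie in $W(F^*+pF)$; you recognize $W(F^*+pF)=\psi(W(F))$ and $\{\beta+\bar\beta p:|\beta|=1\}=\psi(\bT)$, then push $W(F)\cap\bT\subseteq\partial W(F)$ through the homeomorphism $\psi$. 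Your route is a bit more conceptual and makes the role of the hypothesis $|p|\neq1$ completely transparent in both parts; the paper's route is slightly more hands-on but reaches the same conclusion with the same essential ingredient.
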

\begin{proof}[Proof of (1)] The `only if' part is obvious. For the other part,
let us write out $F$ as  a block $2\times 2$ matrix $\begin{bmatrix}
		F_{11} & F_{12}\\
		F_{21} & F_{22}
\end{bmatrix}$
making use of the same basis with respect to which we have assumed that
$F^*+pF=\begin{bmatrix}
	\mu I_{\bC^r} & 0\\
	0 & B
\end{bmatrix}$. Then we get
$$
\begin{bmatrix}
	F_{11}^* & F_{21}^*\\
	F_{12}^* & F_{22}^*
\end{bmatrix}+p\begin{bmatrix}
F_{11} & F_{12}\\
F_{21} & F_{22}
\end{bmatrix}=\begin{bmatrix}
\mu I_{\bC^r} & 0\\
0 & B
\end{bmatrix}. $$
From the equality of the off-diagonal entries we get
$$F_{21}^*+pF_{12}=0, \quad F_{12}^*+pF_{21}=0,$$
and hence 
$$F_{21}^*=-pF_{12}=-p\left(-\overline{p}F_{21}^*\right)=|p|^2F_{21}^*.$$
Then the assumption that $p$ is not unimodular forces $F_{21}=0$. A similar arguments gives $F_{21}=0$. 

\noindent{\em Proof of (2).} It is enough to show that when $\beta$ is unimodular, any neighbourhood of $\beta+\overline\beta p$ contains a point that is not in $W(F^*+pF)$. Let us first note that 
$$
W(F^*+pF)=\{\gamma+\overline{\gamma}p: \gamma\in W(F^*)\subset \overline{\mathbb D}\}.
$$ For every $\epsilon > 0$, the point $(1+\epsilon/2)(\beta + \overline{\beta} p)$ lies in the $\epsilon$-ball centered at $\beta + \overline{\beta} p$. We claim this point cannot belong to $W(F^* + pF)$. If it did, there would exist some $\alpha \in W(F^*)\subset\overline{\mathbb{D}}$ satisfying 
	\[
	(1+\epsilon/2)(\beta + \overline{\beta} p) = \alpha + \overline{\alpha} p.
	\]
The above equality is the same as $(\overline{\alpha}-\overline\beta-\overline\beta\epsilon/2)p=\beta+\beta\epsilon/2 -\alpha$. The number $\overline{\alpha}-\overline\beta-\overline\beta\epsilon/2$ is non-zero because $|\beta|=1$ and $|\alpha|\leq1$. This forces $p$ to be unimodular, contrary to our assumption. This completes the proof.
\end{proof}

The closure of the symmetrized bidisk $\bG$ is denoted by $\Gamma$, i.e.,
$$
\Gamma:=\{(z_1+z_2,z_1z_2):z_1,z_2\in\overline{\bD}\}.
$$Among many characterizations of $\bG$, the following are of particular importance.
\begin{thm}[See Theorem 2.1 in \cite{ay-jga}]\label{T:Charc_G}
The following are equivalent for $s,p$ in $\bC$:
\begin{enumerate}
\item[(i)]  $(s,p) \in \bG$;
\item[(ii)]  $|s-\overline s p| < 1-  |p|^2$;
\item[(iii)] $|p|<1$ and there exists a unique $\beta\in\bD$ such that $s=\beta+\overline \beta p$. Moreover, $\beta$ is given by $\beta=\frac{s-\overline s p}{1-|p|^2};$
\item[(iv)] $|s|<2$ and for all $\alpha\in\overline{\bD}$, $\left|\frac{2\alpha p-s}{2-\alpha s}\right|<1$.
\end{enumerate}
\end{thm}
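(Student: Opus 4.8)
The plan is to prove the cycle of implications (i) $\Rightarrow$ (iii) $\Rightarrow$ (ii) $\Rightarrow$ (i), which settles the equivalence of the first three conditions, and then to deduce (ii) $\Leftrightarrow$ (iv) from a single boundary computation. The elementary fact underpinning everything is that when $|p|<1$ the pair of equations $s=\beta+\bar\beta p$ and $\bar s=\bar\beta+\beta\bar p$ forms a linear system with coefficient matrix $\sbm{1 & p\\ \bar p & 1}$ of nonzero determinant $1-|p|^2$; hence there is a \emph{unique} $\beta\in\bC$ with $s=\beta+\bar\beta p$, namely $\beta=\frac{s-\bar s p}{1-|p|^2}$, so statement (iii) is exactly the assertion that $|p|<1$ and this $\beta$ lies in $\bD$. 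Granting this, (i) $\Rightarrow$ (iii) is a computation: if $s=z_1+z_2,\ p=z_1z_2$ with $z_1,z_2\in\bD$ then $|p|<1$, $s-\bar s p=z_1(1-|z_2|^2)+z_2(1-|z_1|^2)$ and $1-|p|^2=(1-|z_1||z_2|)(1+|z_1||z_2|)$, so with $a=|z_1|,\ b=|z_2|$ the triangle inequality gives $|\beta|\le\frac{(a+b)(1-ab)}{(1-ab)(1+ab)}=\frac{a+b}{1+ab}<1$, the last step being just $(1-a)(1-b)>0$. And (iii) $\Rightarrow$ (ii) is immediate, since $s=\beta+\bar\beta p$ gives $s-\bar s p=\beta(1-|p|^2)$, whence $|s-\bar s p|=|\beta|(1-|p|^2)<1-|p|^2$.

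The one step requiring an idea is (ii) $\Rightarrow$ (i). From $|s-\bar s p|<1-|p|^2$ one gets $|p|<1$, and $\beta:=\frac{s-\bar s p}{1-|p|^2}$ lies in $\bD$ and satisfies $s=\beta+\bar\beta p$. I would then deform $\zeta^2-s\zeta+p$ to a trivial quadratic through the family
$$q_t(\zeta)=\zeta^2-(\beta+t\bar\beta p)\zeta+tp=\zeta(\zeta-\beta)+tp(1-\bar\beta\zeta),\qquad t\in[0,1],$$
noting $q_1(\zeta)=\zeta^2-s\zeta+p$, so that once its two roots are shown to lie in $\bD$ they form a pair witnessing $(s,p)\in\bG$. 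If some $\zeta\in\bT$ were a root of $q_t$, then, using $\beta\in\bD$ to see that $1-\bar\beta\zeta\neq0$, we would get $tp=\frac{\zeta(\beta-\zeta)}{1-\bar\beta\zeta}$; since $|\zeta|=1$ forces $|1-\bar\beta\zeta|=|\zeta-\beta|$, this yields $|tp|=1$, contradicting $t|p|<1$. Hence no $q_t$ vanishes on $\bT$, so $t\mapsto\frac{1}{2\pi i}\oint_{\bT}q_t'(\zeta)/q_t(\zeta)\,d\zeta$ is a continuous, integer-valued function of $t$, hence constant; at $t=0$ it equals $2$ because $q_0(\zeta)=\zeta(\zeta-\beta)$ has both roots $0,\beta$ in $\bD$, so at $t=1$ it is still $2$, i.e.\ both roots of $\zeta^2-s\zeta+p$ lie in $\bD$.

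For (ii) $\Leftrightarrow$ (iv), first note that (ii) forces $|s|(1-|p|)\le|s-\bar s p|<(1-|p|)(1+|p|)$, hence $|s|<1+|p|<2$, while (iv) assumes $|s|<2$ outright; so in either direction I may assume $|s|<2$, which makes $2-\alpha s\neq0$ for $\alpha\in\overline{\bD}$ and $\Phi(\alpha):=\frac{2\alpha p-s}{2-\alpha s}$ holomorphic on a neighbourhood of $\overline{\bD}$, whence the maximum modulus principle gives $\sup_{\overline{\bD}}|\Phi|=\max_{\bT}|\Phi|$. A direct expansion shows that for $|\alpha|=1$
$$|2-\alpha s|^2-|2\alpha p-s|^2=4(1-|p|^2)-4\,\mathrm{Re}\!\big(\alpha(s-\bar s p)\big),$$
whose minimum over $\alpha\in\bT$ is $4\big(1-|p|^2-|s-\bar s p|\big)$; thus $\max_{\bT}|\Phi|<1$ precisely when $|s-\bar s p|<1-|p|^2$, which is (ii), and this closes the argument. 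The main obstacle is the step (ii) $\Rightarrow$ (i): one has to prove that $\zeta^2-s\zeta+p$ has both roots in $\bD$, and the crux is the homotopy above, whose whole point is that a root reaching $\bT$ would force the modulus of the constant term to equal $1$; one could instead quote the Schur--Cohn stability test for quadratics, but its proof is essentially this same computation.
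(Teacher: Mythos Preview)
The paper does not prove this theorem at all: it is stated with the attribution ``See Theorem~2.1 in \cite{ay-jga}'' and used as a black box throughout. So there is nothing to compare your argument against in the paper itself.

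Your proof is correct and self-contained. The computations for (i)~$\Rightarrow$~(iii), (iii)~$\Rightarrow$~(ii), and (ii)~$\Leftrightarrow$~(iv) are routine and check out; in particular the boundary identity $|2-\alpha s|^2-|2\alpha p-s|^2=4(1-|p|^2)-4\operatorname{Re}\big(\alpha(s-\bar s p)\big)$ for $|\alpha|=1$ is exactly what is needed to reduce (iv) to (ii) via the maximum principle. The only nontrivial step, (ii)~$\Rightarrow$~(i), you handle with a clean homotopy-plus-argument-principle argument: deforming $\zeta^2-s\zeta+p$ to $\zeta(\zeta-\beta)$ through the family $q_t$ and observing that a unimodular root would force $|tp|=1$. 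This is elegant and avoids any appeal to the Schur--Cohn criterion or to writing down the roots explicitly. One could alternatively argue directly that if $z_1,z_2$ are the roots of $\zeta^2-s\zeta+p$ then $|z_1z_2|=|p|<1$ forces at least one root in $\bD$, and then use (ii) to rule out the other lying in $\overline{\bD}^c$; but your continuity argument is just as short and arguably more robust.
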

The above theorem, especially part (iii), will be used throughout the paper. The following is the main result of this section.
\begin{thm}\label{T:DistMain}
For a numerical contraction $F$ in $M_d(\bC)$, the set
$$
\cW_F=\{(s,p)\in\bC^2: \det(F^*+pF-sI)=0\}=\{(s,p)\in\bC^2:(s,p)\in\sigma(F^*+pF,pI)\}
$$is an algebraic variety that always intersects $\Gamma$. Moreover, the following are equivalent:
\begin{itemize}
\item[(1)]$\mathcal W_F$ is a distinguished variety;
\item[(2)] $F$ is completely non-unitary, or equivalently, $\sigma(F)\cap\bT=\emptyset$;
\item[(3)] $\cW_F\subset\pi(\bD^2\cup\bT^2\cup\bE^2)$, where $\bE=\bC\setminus \overline\bD$;
\item[(4)] Every irreducible component of $\cW_F$ intersects $\bG$.
\end{itemize}
Conversely, if $\cW$ is any distinguished variety such that each of its irreducible components meets $\bG$, then there exists a c.n.u.\ numerical contraction $F$ such that $\cW=\cW_F$.
\end{thm}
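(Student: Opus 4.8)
The plan is to settle the two ``soft'' assertions directly and then to pivot all four equivalences, and the converse, on the statement that $F$ is completely non-unitary, using the splitting of a finite-dimensional numerical contraction into a unitary summand and a c.n.u.\ summand. That $\cW_F=Z(\xi)$ for the polynomial $\xi(s,p)=\det(F^*+pF-sI)$, with $\deg_s\xi=d$, is immediate, so $\cW_F$ is an algebraic variety. For $\cW_F\cap\Gamma\neq\emptyset$ I would restrict to $p=1$: since $\nu(F)\le1$, the self-adjoint matrix $F^*+F$ has numerical range $\{2\,\mathrm{Re}\langle Fh,h\rangle:\|h\|=1\}\subseteq[-2,2]$, hence an eigenvalue $s_0\in[-2,2]$, and $(s_0,1)\in\cW_F$ with $(s_0,1)=\pi(e^{i\theta},e^{-i\theta})\in b\bG\subseteq\Gamma$ for $2\cos\theta=s_0$; this uses only $\nu(F)\le1$.

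The technical core is the following localization statement: \emph{if $F$ is a c.n.u.\ numerical contraction and $(s,p)\in\cW_F$, then $(s,p)\in\bG$ when $|p|<1$, $(s,p)\in b\bG$ when $|p|=1$, and $(s,p)\in\pi(\bE^2)$ when $|p|>1$.} To prove it, fix a unit eigenvector $h$ of $F^*+pF$ for the eigenvalue $s$ and set $\beta=\langle Fh,h\rangle$, so $|\beta|\le1$ and $s=\langle(F^*+pF)h,h\rangle=\bar\beta+p\beta$. The key claim is that $|\beta|<1$ whenever $|p|\neq1$: if $|\beta|=1$ then $s=\gamma+\bar\gamma p$ with $\gamma=\bar\beta$ unimodular, so Lemma~\ref{L:Auxiliary}(2) places $s\in\partial W(F^*+pF)$; as $s$ is also an eigenvalue, Theorem~\ref{T:BndryEgenvle} gives $F^*+pF\cong sI_r\oplus B$ with $r\ge1$, and then Lemma~\ref{L:Auxiliary}(1) forces $F\cong\alpha I_r\oplus A$ with $\bar\alpha+p\alpha=s=\bar\beta+p\beta$; since $x\mapsto\bar x+px$ is injective for $|p|\neq1$, $\alpha=\beta$, so $\beta\in\sigma(F)\cap\bT$, contradicting c.n.u. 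The three conclusions are then read off Theorem~\ref{T:Charc_G}(iii): for $|p|<1$, $\frac{s-\bar s\,p}{1-|p|^2}=\bar\beta\in\bD$, so $(s,p)\in\bG$; for $|p|>1$, $(s,p)\in\pi(\bE^2)\iff(s/p,1/p)\in\bG$ and $s/p=\beta+\bar\beta(1/p)$ with $\beta\in\bD$; and for $|p|=1$, writing $p=\zeta^2$ with $\zeta\in\bT$ one has $s\bar\zeta=2\,\mathrm{Re}(\beta\zeta)\in[-2,2]$ (using only $|\beta|\le1$), so $(s,p)=\pi(\zeta e^{i\psi},\zeta e^{-i\psi})\in b\bG$ for a suitable real $\psi$.

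With the localization statement in hand, $(2)\Rightarrow(3)$ is immediate. Also $(2)\Rightarrow(1)$: since $\partial\bG\subseteq\Gamma\subseteq\{|p|\le1\}$ and $\Gamma\cap\{|p|=1\}=b\bG$, the $|p|<1$ case gives $\cW_F\cap\partial\bG=\cW_F\cap b\bG$, while $(\bar\mu,0)\in\cW_F\cap\bG$ for any eigenvalue $\mu$ of $F$ (note $|\mu|<1$). And $(2)\Rightarrow(4)$: every irreducible component $Z(\xi_i)$ of $\cW_F$ has $\deg_s\xi_i\ge1$ (a factor depending on $p$ alone would force a vertical line inside $\cW_F$, impossible since $\xi(\cdot,p)$ has degree $d$), hence projects onto all but finitely many values of $p$, so meets $\{|p|<1\}\subseteq\bG$. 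For the reverse implications I would split $F\cong V\oplus F'$ with $V$ unitary on $\bC^r$ and $F'$ c.n.u.\ (Proposition~\ref{Obs:Decomposition}), so $\cW_F=\cW_V\cup\cW_{F'}$ and $\cW_V$ is the union of the lines $L_\gamma=\{(\bar\gamma+p\gamma,p):p\in\bC\}$ over the distinct eigenvalues $\gamma\in\bT$ of $V$. The linear factor $s-\bar\gamma-p\gamma$ of $\xi$ is irreducible and (evaluating at $p=0$, using $\gamma\notin\sigma(F')$) does not divide $\det(F'^*+pF'-sI)$, so each $L_\gamma$ is an irreducible component of $\cW_F$; moreover $L_\gamma\cap\bG=\emptyset$ (there $\frac{s-\bar s\,p}{1-|p|^2}=\bar\gamma\notin\bD$), for $|p|<1$ one has $L_\gamma\ni\pi(\bar\gamma,\gamma p)\in\pi(\bT\times\bD)\subseteq\partial\bG\setminus b\bG$, and $(\bar\gamma,0)\in L_\gamma$ has $\pi$-preimage $\{0,\bar\gamma\}$, which lies in none of $\bD,\bT,\bE$. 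Hence if $r\ge1$ (i.e.\ $\sigma(F)\cap\bT\neq\emptyset$) then (1), (3) and (4) each fail, so $(1)\Leftrightarrow(2)\Leftrightarrow(3)\Leftrightarrow(4)$; and $(2)\Leftrightarrow[\sigma(F)\cap\bT=\emptyset]$ is exactly the discussion preceding Proposition~\ref{Obs:Decomposition}.

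For the converse of the theorem, I would invoke the Pal--Shalit representation (Theorem~3.5 of \cite{Pal-Shalit}) to write a given distinguished variety $\cW$ as $\cW_F$ for some numerical contraction $F$; the hypothesis that each irreducible component of $\cW$ meets $\bG$ (equivalently, by $(3)\Leftrightarrow(4)$ for this $F$, that $\cW\subseteq\pi(\bD^2\cup\bT^2\cup\bE^2)$) is condition (4), so $F$ is c.n.u.\ by the equivalence just proved. The step I expect to be the main obstacle is the localization statement --- upgrading the scalar identity $s=\bar\beta+p\beta$ with $|\beta|=1$ to the genuine conclusion that $F$ has a unimodular eigenvalue. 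The chain Lemma~\ref{L:Auxiliary}(2) $\to$ Theorem~\ref{T:BndryEgenvle} $\to$ Lemma~\ref{L:Auxiliary}(1) is built precisely for this, but it works only for $|p|\neq1$, so the distinguished-boundary slice $|p|=1$ (where $(s,p)\in b\bG$ even when $|\beta|=1$) must be handled separately, and the bookkeeping between $\Gamma$, $\partial\bG$, $b\bG$ and the inversion symmetry $\pi(\bE^2)\leftrightarrow\bG$ has to be kept under tight control.
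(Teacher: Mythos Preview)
Your technical core---the chain Lemma~\ref{L:Auxiliary}(2) $\to$ Theorem~\ref{T:BndryEgenvle} $\to$ Lemma~\ref{L:Auxiliary}(1) to force a unimodular eigenvalue of $F$---is exactly the paper's engine for the hard implication. The difference is organizational: the paper runs the cycle $(1)\Rightarrow(2)\Rightarrow(3)\Rightarrow(4)\Rightarrow(1)$ and treats the regions $R_1=\pi(\bT\times(\bD\cup\bE))$ and $R_2=\pi(\bD\times\bE)$ separately in $(2)\Rightarrow(3)$, whereas you package everything into a single ``localization statement'' pivoting on $|p|$ and then prove all four conditions equivalent to~(2) by a hub-and-spoke argument using the splitting $F\cong V\oplus F'$. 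Your structure is arguably cleaner; your $p=1$ slice for $\cW_F\cap\Gamma\neq\emptyset$ and your projection argument for $(2)\Rightarrow(4)$ are legitimate alternatives to the paper's $p=0$ slice and its ``no variety lives in $\bT^2\cup\bE^2$'' argument.

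The one place to be careful is the converse. You invoke Pal--Shalit's Theorem~3.5 to get $\cW=\cW_F$ globally and then read off ``$F$ is c.n.u.'' from $(4)\Leftrightarrow(2)$. The paper, by contrast, only extracts $\cW\cap\bG=\cW_F\cap\bG$ from the Pal--Shalit construction, then explicitly discards the unitary summand of $F$ (which cannot contribute to $\cW_F\cap\bG$), and finally argues that two varieties all of whose components meet $\bG$ and which agree on $\bG$ must coincide globally. If the literal statement of \cite[Theorem~3.5]{Pal-Shalit} gives you the equality on all of $\bC^2$, your shortcut is fine; if it only gives equality in $\bG$ (as the paper's proof sketch suggests), you need that extra irreducibility/analytic-continuation step, which is short but not automatic.
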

We make a couple of remarks before we prove this theorem.
\begin{remark}
  Let $F$ in $M_d(\bC)$ be any numerical contraction. Then by Proposition \ref{Obs:Decomposition}, $F=F_0\oplus F_u$, where $F_0$ is a completely non-unitary matrix and $F_u$ is a unitary matrix. It is easy to note that $\cW_F=\cW_{F_0}\cup\cW_{F_u}$ and that for some $0\leq r\leq d$,
  $$
  \cW_{F_u}=\bigcup_{j=1}^r\{(\beta_j+\overline{\beta_j}p,p):\beta_j\in\bT,p\in\bC\}.
  $$ Therefore in view of part (iii) of Theorem \ref{T:Charc_G}, we have $\cW_F\cap\bG=\cW_{F_0}\cap\bG$.
\end{remark}

\begin{remark}
One of the important consequences of Theorem~\ref{T:DistMain} is that if $\cW$ is a distinguished variety with an irreducible component not intersecting $\bG$, then there is no numerical contraction $F$ such that $\cW=\cW_F$.
A concrete example demonstrating this phenomenon is the variety
$$
\cW=\{(s,p)\in\bC^2:(s^2-4p)(p-1)=0\}.
$$This is a distinguished variety. The irreducible component $\{(s,p)\in\bC^2:p-1=0\}$ does not intersect $\bG$. There is, in fact, no matrix $F$ such that $\cW=\cW_F$. Because if there was an $F$, then in view of the joint spectrum representation of $\cW_F$:
$$
\cW_F=\{(s,p)\in\bC^2:(s,p)\in\sigma(F^*+pF,pI)\},
$$ $\sigma(F^*+F)$ would have to contain the set $\{z_1+z_2:z_1z_2=1\}$. This is absurd because the spectrum of a matrix is a finite set.
\end{remark}
\begin{proof}[Proof of Theorem \ref{T:DistMain}]
That $\cW_F$ is an algebraic variety follows from the fact that the determinant is always a polynomial; for this, $F$ does not even have to be a numerical contraction. To see that $\cW_F$ intersects $\Gamma$, note that $\cW_F$ contains the points $(\sigma(F^*),0)$, which is contained in $\Gamma$ because $\sigma(F^*)\subset \overline\bD$.

For the moreover part, we prove $(1)\Rightarrow(2)\Rightarrow(3)\Rightarrow(4)\Rightarrow(1)$. The equivalence stated in item $(2)$ is a consequence of Theorem \ref{T:BndryEgenvle}. Indeed, if a c.n.u.\ numerical contraction $F$ has a unimodular eigenvalue, then Theorem \ref{T:BndryEgenvle} shows that $F$ must decompose into a matrix of the form $\beta I_{\bC^r}\oplus F'$ for some $1\leq r$. This is not possible because $F$ is a c.n.u.\ numerical contraction. The other direction is straightforward.

{\em Proof of $(1)\Rightarrow(2)$}: This is easy to see. Indeed, if there was a unitary $U$ such that $F$ is unitarily equivalent to $U\oplus F'$, then $\cW_F$ would contain points of the form $(\beta+\overline{\beta} p,p)$ such that $(\beta,p)\in\bT\times\bD$, which would violate the distinguished property of $\cW_F$.

{\em Proof of $(2)\Rightarrow(3)$}: This is the non-trivial part of the proof and has some important implications that we shall record as we progress with the proof.

To establish the containment in $(3)$, it is equivalent to show that $\cW_F$ does not intersect
$$
R_1:=\pi(\bD\times\bT)\cup\pi(\bT\times\bD)\cup\pi(\bE\times\bT)\cup\pi(\bT\times\bE)
$$and
$$
R_2:=\pi(\bD\times\bE)\cup\pi(\bE\times\bD).
$$Let us suppose on the contrary that $(s,p)=\pi(z_1,z_2)\in\cW_F\cap \pi(\bD\times\bE)$. This means that there exists a norm-unit vector $v$ such that with $\beta=\langle F^*v,v\rangle $,
$$
z_1+z_2=\beta+\overline\beta z_1z_2,
$$ or equivalently,
$$
z_2=\frac{\beta-z_1}{1-\overline\beta z_1}.
$$Since $F$ is a numerical contraction, $\beta\in\overline{\bD}$, and since $z_1\in\bD$, the above representation of $z_2$ implies that it must belong to $\overline{\bD}$ contradicting the assumption that $z_2\in\bE$. A similar argument works for the case $\pi(\bE\times\bD)$. Thus $\cW_F$ does not intersect $R_2$. Note that we have not used the completely non-unitary property of $F$ to rule out $R_2$; the analysis works for any numerical contractions.

To show that $\cW_F$ does not intersect $R_1$, we begin with the following characterization of $R_1$.
\begin{lemma}\label{L:R1}
A pair $(s,p)$ is in $R_1$ if and only if $p$ is not in $\bT$ and $\frac{s-\overline s p}{1-|p|^2}$ is in $\bT$.
\end{lemma}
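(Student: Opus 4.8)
The plan is to unwind the definition of $R_1$ and reduce everything to the elementary identity relating the symmetrization map $\pi$ to the quantity $\frac{s-\overline sp}{1-|p|^2}$ that appears in part (iii) of Theorem \ref{T:Charc_G}. By definition, $(s,p)\in R_1$ exactly when $s$ and $p$ admit a factorization $s=z_1+z_2$, $p=z_1z_2$ in which precisely one of $|z_1|,|z_2|$ equals $1$; I will treat the case $|z_1|=1$, $|z_2|\ne 1$, the other being identical by symmetry.

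For the forward implication, I would write $z_1=\beta$ with $\beta\in\bT$. Then $|p|=|z_1z_2|=|z_2|\ne 1$, so $p\notin\bT$, and $z_2=\overline\beta p$, whence $s=\beta+\overline\beta p$. A direct computation using $|\beta|=1$ gives $s-\overline sp=\beta(1-|p|^2)$, so $\frac{s-\overline sp}{1-|p|^2}=\beta\in\bT$, the denominator being nonzero since $|p|\ne 1$.

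For the converse, suppose $p\notin\bT$ and $\beta:=\frac{s-\overline sp}{1-|p|^2}\in\bT$. I would set $w:=s-\beta-\overline\beta p$ and aim to show $w=0$. Substituting $s=w+\beta+\overline\beta p$ into the relation $s-\overline sp=\beta(1-|p|^2)$ and simplifying with $|\beta|=1$ yields $w-\overline wp=0$, i.e.\ $w=\overline wp$; taking moduli gives $|w|=|w|\,|p|$, and since $|p|\ne 1$ this forces $w=0$. Hence $s=\beta+\overline\beta p$, so with $z_1:=\beta$, $z_2:=\overline\beta p$ one has $z_1+z_2=s$, $z_1z_2=p$, $|z_1|=1$, $|z_2|=|p|\ne1$; according as $|p|<1$ or $|p|>1$ this places $(s,p)$ in $\pi(\bT\times\bD)$ or $\pi(\bT\times\bE)$, hence in $R_1$.

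The only delicate point is the converse. The formula $\beta=\frac{s-\overline sp}{1-|p|^2}$ is exactly the one from Theorem \ref{T:Charc_G}(iii), but that result is stated only for $|p|<1$, whereas here $p$ may lie in $\bE$; this is why I cannot simply invoke (iii) and instead must argue directly that $w=\overline wp$ together with $|p|\ne1$ forces $w=0$. That modulus step is the (mild) crux, and it works uniformly whether $p$ lies in $\bD$ or in $\bE$.
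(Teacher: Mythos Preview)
Your proof is correct and follows essentially the same approach as the paper. For the converse the paper is slightly more direct: it simply records the algebraic identity $s=\dfrac{s-\overline sp}{1-|p|^2}+\dfrac{\overline s-s\overline p}{1-|p|^2}\,p$, valid for \emph{all} $s,p$ with $|p|\ne1$, which gives $s=\beta+\overline\beta p$ immediately without introducing the auxiliary $w$ or the modulus step.
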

\begin{proof}
If $(s,p)=(\zeta+z,\zeta z)\in R_1$ such that $\zeta\in\bT$ and $|z|\neq1$, then obviously $|p|\neq 1$. Straightforward computation yields $s-\overline s p= \zeta(1-|p|^2)$, proving the forward assertion. The converse follows immediately from the equality
$$
s=\frac{s-\overline s p}{1-|p|^2}+\frac{\overline s- s\overline p}{1-|p|^2}p.
$$
\end{proof}
Now let us suppose on the contrary that $(s,p)\in \cW_F\cap R_1$. Then there exists a norm-unit vector $v$ such that $(F^*+pF)v=sv$. Thus $s$ is an eigenvalue of $F^*+pF$, and with $\beta=\langle F^*v,v\rangle$ we have $s=\beta+\overline{\beta} p$. Since $|p|\neq 1$, we must have $\beta=\frac{s-\overline s p}{1-|p|^2}$. By Lemma \ref{L:R1}, $\beta\in\bT$. Thus by part (2) of Lemma \ref{L:Auxiliary}, $s=\beta+\overline{\beta} p$ must belong to $\partial W(F^*+pF)$. Since $s$ is an eigenvalue of $F^*+pF$, by Theorem \ref{T:BndryEgenvle}, there exists a positive integer $r\leq d$, and a unitary $\tau$ such that
$$
\tau(F^*+pF)\tau=
\begin{bmatrix}
sI_{\bC^r}&0\\0& G
\end{bmatrix}
$$for some matrix $G$ whose spectrum does not contain $s$. It turns out that we do not need the spectrum assertion on $G$. Now apply part (1) of Lemma \ref{L:Auxiliary} for $\tau^*F\tau$ to obtain
\begin{align}\label{gradedF}
\tau^*F\tau=
\begin{bmatrix}
\overline{\beta} I_{\bC^r}&0\\0& F'
\end{bmatrix}
\end{align}for some matrix $F'$. The scalar in the $(11)$-entry of $\tau^*F\tau$ has to be $\overline{\beta}$ because $s=\beta+\overline{\beta} p$. But the fact that $\beta\in\bT$ contradicts the hypothesis that $F$ is completely non-unitary. This completes the proof of $(2)\Rightarrow(3)$.

{\em Proof of $(3)\Rightarrow(4)$}: This follows from the fact that no algebraic variety can be entirely contained in $\bT^2\cup\bE^2$. Indeed, if $Z(\xi)\subset \bT^2\cup\bE^2$ for a polynomial $\xi$ in $C[z_1,z_2]$, then $\xi$ cannot be a polynomial of only one of the variables. Now choose $z_1\in\bD$ and consider the polynomial $\xi_{z_1}(z_2)=\xi(z_1,z_2)$. Since $\bC$ is an algebraically closed field, $\xi_{z_1}$ must vanish at some point. This violates the containment $Z(\xi)\subset \bT^2\cup\bE^2$. Now the proof follows from the understanding that a variety in $(z_1+z_2,z_1z_2)$-coordinate is the symmetrization of a variety in $(z_1,z_2)$-coordinate - see Lemma 3.1 in \cite{Pal-Shalit}.

{\em Proof of $(4)\Rightarrow(1)$}: If every irreducible component of $\cW_F$ meets $\bG$, then clearly $F$ must be completely non-unitary. We have already proved that in such a case, $\cW_F$ is contained in $\pi(\bD^2\cup\bT^2\cup\bE^2)$. This, together with the fact that no algebraic variety is entirely contained in $\bT^2\cup\bE^2$ ensures the non-emptiness of $\cW_F\cap\bG$. The rest follows from the definition of a distinguished variety.

For the converse part, it is enough to assume that $\cW$ is an irreducible variety because if it is not, then one considers the direct sum of the numerical contractions obtained for each irreducible component.

So given an irreducible distinguished variety $\cW$, we have to find a c.n.u.\ numerical contraction $F$ such that $\cW=\cW_F$. The idea goes back to the work \cite{AM_Acta} of Agler and McCarthy, which was adopted in the setting of the symmetrized bidisk to obtain an $F$ by Pal and Shalit \cite[Theorem 3.5]{Pal-Shalit}. We sketch the proof for completeness. The first step is to construct a Hardy-type reproducing kernel Hilbert space $H(k)$ on $\cW\cap \bG$ that originates from a regular Borel measure on the boundary $\cW\cap b\bG$. The existence of the measure is guaranteed by a result of the theory of Riemann surfaces. Consider the pair $(M_s,M_p)$ of multiplication by coordinate functions on $H(k)$. It turns out (see Lemma 3.4 in \cite{Pal-Shalit}) that the closed symmetrized bidisk $\Gamma$ is a spectral set for $(M_s,M_p)$ (the so called $\Gamma$-contraction, a terminology due to Agler and Young \cite{ay-jot}), i.e.,
$$
\|f(M_s,M_p)\|\leq \sup\{|f(s,p)|:(s,p)\in\Gamma\}
$$for every polynomial $f$ in two variables. Note that this makes $M_p$ a contraction. Invoke Theorem 4.2 in \cite{BPSR} that says that there is a unique numerical contraction $F:\operatorname{Range}(I-M_pM_p^*)\to\operatorname{Range}(I-M_pM_p^*)$ such that
$$
M_s^*-M_sM_p^*=(I-M_pM_p^*)^{\frac{1}{2}}F(I-M_pM_p^*)^{\frac{1}{2}}.
$$Pal and Shalit showed that $\operatorname{Range}(I-M_pM_p^*)$ is finite dimensional (thus $F$ is a matrix) and that $\cW\cap\bG=\cW_F\cap\bG$.

While there is no reason to believe that $\nu(F)<1$, we note that $F$ can be replaced by its c.n.u.\ part harmlessly. This is because the unitary part of $F$ has no contribution in the intersection $\cW_F\cap\bG$. Now, we know that if $F$ is c.n.u., then every irreducible component of $\cW_F$ meets $\bG$. Since $\cW\cap\bG=\cW_F\cap\bG$, and $\cW$ is irreducible, $\cW_F$ must be irreducible. So we have arrived at a situation where two irreducible varieties coincide in $\bG$. This forces the two varieties to be the same. This completes the proof.
\end{proof}
During the course of the proof of $(2)\Rightarrow(3)$, there emerged an interesting fact that we record below as a proposition. Note that to arrive at the conclusion \eqref{gradedF}, all we used is that $F$ is a numerical contraction and $(s,p)\in\cW_F$ such that $|p|\neq 1$ and $\frac{s-\overline s p}{1-|p|^2}\in\bT$. Thus we have already proved $(1)\Rightarrow(2)$ of the following result; the implications $(2)\Rightarrow(3)$ and $(3)\Rightarrow(1)$ are obvious.
\begin{proposition}\label{P:sheet}
Let $F\in M_d(\mathbb C)$ be a numerical contraction and $\beta\in\bT$. Then the following are equivalent:
\begin{itemize}
\item[(1)]
$\mathcal W_F$ contains a point of the form $(\beta+\overline{\beta} p,p)$ such that $|p|\neq 1$;
\item[(2)] $F$ is unitarily equivalent to $\begin{bmatrix} \overline \beta I_{\mathbb C^r} &0\\ 0 & F'
\end{bmatrix}$ for some matrix $F'$;
\item[(3)] $\mathcal W_{\beta}\subseteq \mathcal W_F$.
\end{itemize}
\end{proposition}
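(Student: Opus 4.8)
The plan is to run the cyclic chain $(1)\Rightarrow(2)\Rightarrow(3)\Rightarrow(1)$. Of these, $(2)\Rightarrow(3)$ and $(3)\Rightarrow(1)$ are immediate determinant observations, so the only implication with any content is $(1)\Rightarrow(2)$; and even there nothing genuinely new has to be proved, since the required chain of steps is exactly the one already carried out in the proof of $(2)\Rightarrow(3)$ of Theorem~\ref{T:DistMain} leading up to \eqref{gradedF}, now read with the present hypothesis in place of the intermediate conclusion reached there. So the write-up will essentially consist of isolating that sub-argument and appending the two easy implications.

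Concretely, for $(1)\Rightarrow(2)$ I would start from a point $(s,p)=(\beta+\overline\beta p,p)\in\cW_F$ with $\beta\in\bT$ and $|p|\neq1$, so that $s$ is an eigenvalue of $F^*+pF$; I pick a unit eigenvector $v$ and set $\beta_0:=\langle F^*v,v\rangle$. Pairing $(F^*+pF)v=sv$ with $v$ gives $s=\beta_0+\overline{\beta_0}p$, and since the map $x\mapsto x+\overline x p$ is injective whenever $|p|\neq1$ (from $x+\overline x p=y+\overline y p$ one gets $|x-y|=|x-y|\,|p|$), this forces $\beta_0=\beta\in\bT$. Hence $s=\beta+\overline\beta p$ lies in $W(F^*+pF)\cap\{\gamma+\overline\gamma p:|\gamma|=1\}$, so by Lemma~\ref{L:Auxiliary}(2) it lies on $\partial W(F^*+pF)$. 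Being an eigenvalue of $F^*+pF$ sitting on the boundary of its numerical range, Theorem~\ref{T:BndryEgenvle} supplies a unitary $\tau$ and an integer $1\leq r\leq d$ with $\tau^*(F^*+pF)\tau=\sbm{sI_{\bC^r}&0\\0&G}$. Since the linear pencil of $\tau^*F\tau$ is precisely $\tau^*(F^*+pF)\tau$, Lemma~\ref{L:Auxiliary}(1) (here $|p|\neq1$ is used again) yields $\tau^*F\tau=\sbm{\alpha I_{\bC^r}&0\\0&F'}$ for some scalar $\alpha$ and matrix $F'$; comparing $(1,1)$-blocks of the two pencils gives $\overline\alpha+\alpha p=s=\beta+\overline\beta p$, and one further application of the injectivity above pins $\alpha=\overline\beta$ --- which is exactly statement~(2).

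Finally, $(2)\Rightarrow(3)$: if $F$ is unitarily equivalent to $\overline\beta I_{\bC^r}\oplus F'$, then $\det(F^*+pF-sI)=(\beta+\overline\beta p-s)^r\det(F'^*+pF'-sI)$, so $\cW_F$ contains the complex line $\cW_\beta=\{(\beta+\overline\beta p,p):p\in\bC\}$; and $(3)\Rightarrow(1)$ because $\cW_\beta\subseteq\cW_F$ in particular contains the point $(\beta,0)$, which is of the required form $(\beta+\overline\beta p,p)$ with $p=0$ and $|p|\neq1$. I do not expect a real obstacle: all the machinery (Theorem~\ref{T:BndryEgenvle} and Lemma~\ref{L:Auxiliary}) is already in hand. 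The one spot that deserves care is verifying that the scalar appearing in the $(1,1)$-block of $\tau^*F\tau$ is precisely $\overline\beta$ rather than merely some unimodular number --- this is where the explicit form $s=\beta+\overline\beta p$ of the point in~(1), together with the $|p|\neq1$ uniqueness of the representation, is indispensable.
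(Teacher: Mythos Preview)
Your proposal is correct and follows essentially the same approach as the paper. The paper's proof simply observes that $(1)\Rightarrow(2)$ is exactly the argument leading to \eqref{gradedF} in the proof of $(2)\Rightarrow(3)$ of Theorem~\ref{T:DistMain} (with the hypothesis $\beta\in\bT$ replacing the conclusion drawn there via Lemma~\ref{L:R1}), and declares $(2)\Rightarrow(3)$ and $(3)\Rightarrow(1)$ obvious; you spell out these steps in more detail, using the injectivity of $x\mapsto x+\overline{x}p$ for $|p|\neq1$ in place of the explicit inverse formula $\beta=\tfrac{s-\overline{s}p}{1-|p|^2}$, but the two formulations are equivalent.
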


\begin{remark}
We believe that items (2) and (3) of Proposition \ref{P:sheet} are equivalent even in the case when $|\beta|\neq1$. This is easily seen to be true when $F$ is normal for example. Indeed, the containment $\cW_\beta\subset\cW_F$ implies that
\begin{align}\label{ZeroinC}
\det[(F^*-\beta I)+p(F-\overline\beta I)]=0 \quad\text{for all }p\in\bC.
\end{align}If $F$ is normal, then so is $F^*-\beta I$ and hence is diagonalizable, which readily implies that $F^*-\beta I$ must have $0$ as an eigenvalue. This establishes item (2). Note that in this case, we used the information \eqref{ZeroinC} just for $p=0$.
\end{remark}
\begin{remark}
If an algebraic variety $\cV$ is contained in $\bD^2\cup\bT^2\cup\bE^2$, then as explained in the proof of $(3)\Rightarrow(4)$ of Theorem \ref{T:DistMain}, every irreducible component of $\cV$ meets $\bD^2$. In Proposition 4.1 of \cite{Knese-TAMS2010}, Knese elegantly showed that the converse is also true: if $\cV$ is a distinguished variety with respect to $\bD^2$ each of whose irreducible components meets $\bD^2$, then $\cV\subset\bD^2\cup\bT^2\cup\bE^2$. Theorem \ref{T:DistMain} gives a new proof of this result. Indeed, let $\cV$ be a distinguished variety with respect to $\bD^2$ such that each irreducible component intersects $\bD^2$. Then $\cW=\pi(\cV)$ is a distinguished variety with each irreducible component intersecting $\bG$. Apply the converse of Theorem \ref{T:DistMain} to conclude that $\cW=\cW_F$ for some c.n.u.\ numerical contraction $F$. Then item (3) of Theorem \ref{T:DistMain} implies that $\pi(\cV)\subset\pi(\bD^2\cup\bT^2\cup\bE^2)$. Now pull back this containment to obtain Knese's characterization.
\end{remark}

\begin{definition}
A set $X\subset\bC^2$ is said to have the {\em distinguished property}, if its closure satisfies the following boundary constraint:
$$\overline{X}\cap\partial \Gamma=\overline{X}\cap b\bG.$$
\end{definition}Therefore by definition, a distinguished variety has the distinguished property. Such sets have appeared before in \cite{DGH_IEOT} while studying the totally Abelian analytic Toeplitz operators.

Given a numerical contraction $F$, an interesting consequence of Theorem \ref{T:DistMain} is that the intersection $\cW_F\cap\bG$ always enjoys the distinguished property, while the variety $\cW_F$ need not have the distinguished property (precisely when $F$ is not c.n.u.).
\begin{corollary}
Given a numerical contraction $F$, the set $\cW_F\cap\bG$ always satisfies the distinguished property.
\end{corollary}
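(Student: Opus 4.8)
The plan is to reduce the statement to the completely non-unitary part of $F$ and then quote Theorem \ref{T:DistMain}. First I would invoke Proposition \ref{Obs:Decomposition} to write $F=F_0\oplus F_u$ with $F_0$ completely non-unitary and $F_u$ unitary, and record (as in the Remark immediately following Theorem \ref{T:DistMain}, or directly from part (iii) of Theorem \ref{T:Charc_G}, since $\cW_{F_u}$ lies over unimodular values of the associated $\beta$) that $\cW_F\cap\bG=\cW_{F_0}\cap\bG$. Put $X:=\cW_F\cap\bG=\cW_{F_0}\cap\bG$. The one structural observation to carry throughout is that $\cW_{F_0}$, being the zero set of a polynomial, is closed in $\bC^2$, so from $X\subseteq\cW_{F_0}$ we get $\overline X\subseteq\cW_{F_0}$; this is precisely what lets a boundary limit point of $X$ be handled as a point of the variety $\cW_{F_0}$.

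Next I would peel off the degenerate case. If $F$ is unitary then $F_0$ acts on the zero space, $\cW_{F_0}=\emptyset$, so $X=\emptyset$ and $\overline X\cap\partial\Gamma=\emptyset=\overline X\cap b\bG$ holds trivially. Otherwise $F_0$ is a nontrivial c.n.u.\ numerical contraction, so by the equivalence $(1)\Leftrightarrow(2)$ in Theorem \ref{T:DistMain} the set $\cW_{F_0}$ is a distinguished variety; that is, $\cW_{F_0}\cap\partial\bG=\cW_{F_0}\cap b\bG$, and since $\partial\bG=\partial\Gamma$ (recall $\Gamma=\overline\bG$), we have $\cW_{F_0}\cap\partial\Gamma=\cW_{F_0}\cap b\bG$.

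Then the two inclusions defining the distinguished property of $\cW_F\cap\bG$ fall out. For $\overline X\cap b\bG\subseteq\overline X\cap\partial\Gamma$ I only need $b\bG\subseteq\partial\Gamma$: a point of $b\bG$ has $|p|=1$, hence is not in $\bG$ by part (iii) of Theorem \ref{T:Charc_G}, yet it plainly lies in $\Gamma$, so it lies in $\partial\Gamma=\Gamma\setminus\bG$. For the reverse inclusion, if $z\in\overline X\cap\partial\Gamma$ then $z\in\cW_{F_0}$ (because $\overline X\subseteq\cW_{F_0}$) and $z\in\partial\Gamma=\partial\bG$, so $z\in\cW_{F_0}\cap\partial\bG=\cW_{F_0}\cap b\bG$; in particular $z\in b\bG$, hence $z\in\overline X\cap b\bG$. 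Combining gives $\overline X\cap\partial\Gamma=\overline X\cap b\bG$, which is exactly the asserted distinguished property.

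I do not anticipate a genuine obstacle here; all the content lives in the earlier results. The only care needed is the bookkeeping just described: keeping $\overline X$ inside the closed set $\cW_{F_0}$, separating the case where $F$ is unitary (so that one is entitled to call $\cW_{F_0}$ a distinguished variety rather than the empty set), and remembering that the distinguished-variety identity is a statement about all of $\cW_{F_0}\cap\partial\Gamma$, which in particular covers the smaller set $\overline X\cap\partial\Gamma$.
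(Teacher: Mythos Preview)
Your proposal is correct and follows essentially the same route as the paper: reduce to the c.n.u.\ part $F_0$ via $\cW_F\cap\bG=\cW_{F_0}\cap\bG$, invoke Theorem \ref{T:DistMain} to know $\cW_{F_0}$ is distinguished, and then read off the boundary identity for $\overline{\cW_{F_0}\cap\bG}$ from that of $\cW_{F_0}$. Your version is in fact a bit more careful than the paper's one-line chain of equalities---you isolate the unitary case and make explicit the closedness step $\overline X\subseteq\cW_{F_0}$, which is exactly what justifies passing from $\overline X\cap\partial\Gamma$ to $\cW_{F_0}\cap\partial\Gamma$.
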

\begin{proof}
Note that if $F'$ is the c.n.u.\ part of $F$, then $\cW_F\cap\bG=\cW_{F'}\cap\bG$ because the component corresponding to the unitary part never intersects the open symmetrized bidisk. Since $\cW_{F'}$ is a distinguished variety and each of its irreducible components meets $\bG$, we have
$$
\overline{\cW_{F'}\cap\bG}\cap \partial\Gamma=\cW_{F'}\cap \partial\Gamma=\cW_{F'}\cap b\bG=\overline{\cW_{F'}\cap\bG}\cap b\bG.
$$\end{proof}
The royal variety $\mathcal R=\{(s,p)\in\bC^2: s^2-4p=0\}$ is a special distinguished variety. It has proved to be of great importance in understanding the complex geometry of $\bG$, see for example \cite[Theorem 1.4]{ALY_MAMS} and \cite[Theorem 2.6]{BBM}.  The mystery surrounding the $\nu(F)=1$ case (discussed in the Introduction following equation \eqref{DistVarG}) led to the intuitive expectation that for $\cW_F$ to be a distinguished variety when $\nu(F)=1$, it must contain the royal variety. The following example shows that this need not be true.
 \begin{example}
 Let $F=\sbm{
 1/2 & 1\\
 0 & 1/2}$. Then a simple calculation reveals that $\nu(F)=1$. Also, note that
 \[\mathcal W_F=\{(s,p)\in \mathbb C^2: ((1+p)-2s)^2-4p=0\}.
 \]
 By part (2) of Theorem~\ref{T:DistMain}, $\mathcal W_F$ is a distinguished variety. But $\mathcal W_F$ does not contain the royal variety $\cR$ because for example $(0,0)$ is in $\cR$ but not in $\cW_F$.
 \end{example}

As elaborated in \cite[Lemma 3.1]{Pal-Shalit}, $\bG$-distinguished varieties are pecisely the image of $\bD^2$-distinguished varieties under the symmetrization map $\pi$. Theorem 3.2 in \cite{BKS-APDE} describes the $\bD^2$-distinguished varieties as
$$
\cW_{P,U}=\{(z_1,z_2)\in\bC^2: (z_1,z_2)\in\sigma(P^\perp U+z_1z_2 PU, U^*P+z_1z_2U^*P^\perp)\}
$$where $P$ is an orthogonal projection and $U$ a unitary matrix acting on $\bC^d$, for some $d\geq1$. The ingedients $(P,U)$ arise from an application of \cite[Theorem 3.1]{BCL} which characterises commuting isometries. It was detailed in the proof of Theorem 6.1 in \cite{BKS-APDE} that $\pi(\cW_{P,U})=\cW_{PU+U^*P^\perp}$  ($\cW_F$, for a matrix $F$, is as in Theorem \ref{T:DistMain}). As mentioned in the Introduction (see Lemma 3.10 of \cite{BKS-APDE}) that
\begin{align*}
\cN=\{PU+U^*P^\perp: P \text{ is projection, } U \text{ a unitary on }\bC^d\}
\end{align*} is a family of numerical contractions. Thus what we have is the following theorem.
\begin{thm}[Theorem 6.1 in \cite{BKS-APDE}]\label{T:BKP_APDE}
Let $P$ be an orthogonal projection and $U$ be a unitary acting on a finite-dimensional Hilbert space such that $\nu(PU+U^*P^\perp)<1$. Then $\cW_{PU+U^*P^\perp}$ is a distinguished variety. Conversely, if $\mathcal W$ is any distinguished variety, then there is an orthogonal projection $P$ and a unitary $U$ acting on a finite-dimensional Hilbert space such that $\mathcal W\cap\mathbb G=\mathcal W_{PU+U^*P^\perp}\cap\mathbb G$.
\end{thm}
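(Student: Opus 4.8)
The plan is to prove the two directions separately. The forward direction is immediate: if $\nu(PU+U^*P^\perp)<1$ then $F:=PU+U^*P^\perp$ is a strict numerical contraction on a finite dimensional space, and the Pal--Shalit theorem (see the discussion after \eqref{DistVarG}, i.e.\ \cite[Theorem 3.5]{Pal-Shalit}) gives at once that $\cW_F$ is a distinguished variety. The strict inequality is exactly the hypothesis that allows that theorem to be invoked, in keeping with the fact that $\cW_F$ can fail to be distinguished when $\nu(F)=1$. All of the remaining work goes into the converse.

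\emph{Converse, reductions.} Let $\cW$ be a distinguished variety. First reduce to the irreducible case: if $\cW=\bigcup_j\cW_j$ with each $\cW_j$ irreducible and $\cW_j\cap\bG=\cW_{P_jU_j+U_j^*P_j^\perp}\cap\bG$, then $P:=\bigoplus_jP_j$ and $U:=\bigoplus_jU_j$ satisfy $PU+U^*P^\perp=\bigoplus_j(P_jU_j+U_j^*P_j^\perp)$, hence $\cW\cap\bG=\cW_{PU+U^*P^\perp}\cap\bG$. For an irreducible $\cW$, invoke the converse half of Theorem \ref{T:DistMain} (equivalently \cite[Theorem 3.5]{Pal-Shalit}): form the Hardy-type reproducing kernel Hilbert space $H(k)$ on $\cW\cap\bG$ from a boundary measure on $\cW\cap b\bG$, let $(M_s,M_p)$ be the pair of coordinate multipliers --- a $\Gamma$-contraction with $\operatorname{Ran}(I-M_pM_p^*)$ finite dimensional --- and take $F$ to be the associated fundamental operator of \cite[Theorem 4.2]{BPSR}. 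Then $F$ is a numerical contraction on a finite dimensional space with $\cW\cap\bG=\cW_F\cap\bG$. The theorem is thereby reduced to the matricial assertion that every numerical contraction $F$ satisfies $\cW_F\cap\bG=\cW_{PU+U^*P^\perp}\cap\bG$ for some projection $P$ and unitary $U$ on a finite dimensional space.

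\emph{Converse, the matricial core.} To prove this, the plan is to pass from the $\Gamma$-contraction $(M_s,M_p)$ to its minimal $\Gamma$-isometric dilation $(T_s,T_p)$, where $T_p$ is an isometry with finite dimensional defect (finiteness being inherited from $\operatorname{Ran}(I-M_pM_p^*)$), and to examine the fundamental operator $\widetilde F$ of that $\Gamma$-isometry. Two facts are then to be checked. First, dilation does not disturb the part of the joint spectrum lying in $\bG$, so $\cW_{\widetilde F}\cap\bG=\cW_F\cap\bG$. Second --- and this is the structural heart of the matter --- the fundamental operator of such a $\Gamma$-isometry is automatically of the form $PU+U^*P^\perp$: here one uses the model theory of pure $\Gamma$-isometries together with the Agler--McCarthy description of the underlying distinguished variety by a \emph{rational} matrix inner function $\Phi$ (carried over to $\bG$ through the symmetrization of \cite[Lemma 3.1]{Pal-Shalit}); realizing $\Phi$ by a finite dimensional unitary colligation lets one read off $U$ as the unitary of boundary values of $\Phi$ and $P$ as the compression projection coming from the shift part of $T_p$, and with these choices $\widetilde F$ is unitarily equivalent to $PU+U^*P^\perp$. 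Chaining the two facts with the reductions above finishes the argument.

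\emph{Main obstacle.} The crux is the second fact in the preceding paragraph: recognizing the fundamental operator of the $\Gamma$-isometric dilation explicitly in the form $PU+U^*P^\perp$, and carrying the attendant bookkeeping --- finite dimensionality of every space involved, and agreement of the two joint spectra off the distinguished boundary $b\bG$. The forward direction, the reduction to irreducible varieties, and the invariance of $\cW\cap\bG$ under the dilation step are all routine given the results already available in the excerpt.
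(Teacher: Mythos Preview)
The paper does not supply a proof of Theorem~\ref{T:BKP_APDE}; it is quoted verbatim from \cite{BKS-APDE} (note the sentence ``The following theorem was proved in \cite{BKS-APDE}'' immediately preceding the statement). So there is no in-paper argument to compare against, and what follows is an assessment of your proposal on its own terms, together with a word on how \cite{BKS-APDE} actually proceeds.

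Your forward direction is correct and is exactly the one-line argument everyone would give: a strict numerical contraction falls under the Pal--Shalit theorem, hence $\cW_{PU+U^*P^\perp}$ is distinguished.

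The converse, however, has a real gap at precisely the point you label the ``main obstacle.'' Two issues. First, passing to the minimal $\Gamma$-isometric dilation does not buy you anything: in the Pal--Shalit construction the pair $(M_s,M_p)$ on the Hardy space of $\cW\cap\bG$ already has $M_p$ a pure isometry (since $|p|=1$ on $\cW\cap b\bG$), so the dilation is essentially trivial and the fundamental operator $\widetilde F$ coincides with the original $F$. Your target therefore collapses to the assertion that this particular $F$ is of the form $PU+U^*P^\perp$, which you have not shown. Second, your proposed mechanism for extracting $P$ and $U$ --- ``read off $U$ as the unitary of boundary values of $\Phi$'' --- is ill-defined: the boundary values $\Phi(\zeta)$, $\zeta\in\bT$, of a rational matrix inner function form a one-parameter family of unitaries, not a single $U$, and nothing in a generic colligation realization $\Phi(z)=A+zB(I-zD)^{-1}C$ singles out a projection/unitary pair with $F=PU+U^*P^\perp$.

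The tool you are missing is the Berger--Coburn--Lebow theorem, which is exactly what \cite{BKS-APDE} uses (as its title advertises). One lifts to the bidisk picture via \cite[Lemma~3.1]{Pal-Shalit}, obtains a pair of commuting pure isometries $(V_1,V_2)$ on $H^2(\cE)$ with $V_1V_2=M_z$, and then BCL furnishes a projection $P$ and a unitary $U$ on $\cE$ with $V_1=M_{U^*(P^\perp+zP)}$ and $V_2=M_{(P+zP^\perp)U}$ (up to the usual labeling). Summing gives $V_1+V_2=M_{G^*+zG}$ with $G=U^*P+P^\perp U$, so the fundamental operator of the resulting pure $\Gamma$-isometry is explicitly of the claimed shape, and the variety identity in $\bG$ follows. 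Your sketch gestures toward this structure but never invokes BCL, and without it the identification step is a plan rather than a proof.
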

It is therefore natural to ask for a $(P,U)$-version of Theorem \ref{T:DistMain}. This requires the knowledge of exactly when a numerical contraction of the form $PU+U^*P^\perp$ is completely non-unitary. The following lemma provides it.
\begin{lemma}
  For an orthogonal projection $P$ and a unitary $U$ acting on $\bC^d$, $PU+U^*P^\perp$ is not c.n.u.\ if and only if there is a non-trivial $U$-reducing subspace contained either in $\operatorname{Range}P$ or in $\operatorname{Range}P^\perp$.

\end{lemma}
\begin{proof}
  Suppose that there is a non-trivial subspace $\cH\subset\operatorname{Range}P$ such that
$U=\sbm{U'&0\\0&W}$ with respect to the decomposition $\sbm{\cH \\ \cH^\perp}$. The case $\cH\subset\operatorname{Range}P^\perp$ can be dealt with similarly. Since $\cH\subset\operatorname{Range}P$, the projections $P,P^\perp$ take the form $P=\sbm{I_\cH&0\\0&Q}$ and $P^\perp=\sbm{0&0\\0&Q^\perp}$ for some orthogonal projection $Q$ on $\cH^\perp$. It is now a plain matrix computation to see that $PU+U^*P^\perp=\sbm{U'&0\\0&QW+W^*Q^\perp}$. Since $\cH$ is non-trivial and $U'$ is unitary, $PU+U^*P^\perp$ cannot be completely non-unitary. Note that this direction works for infinite dimensional Hilbert spaces also. We shall use finite dimensionality in the other direction.

Note that if
  $$
  U=\begin{bmatrix}
    A&B\\C&D
  \end{bmatrix}:\begin{bmatrix}
    \operatorname{Range}P\\ \operatorname{Range}P^\perp
  \end{bmatrix}\to\begin{bmatrix}
    \operatorname{Range}P\\ \operatorname{Range}P^\perp
  \end{bmatrix},
  $$then
  $$
  PU+U^*P^\perp=
  \begin{bmatrix}
  A&B+C^*\\0&D^*
  \end{bmatrix}:\begin{bmatrix}
    \operatorname{Range}P\\\operatorname{Range}P^\perp
  \end{bmatrix}\to\begin{bmatrix}
    \operatorname{Range}P\\ \operatorname{Range}P^\perp
  \end{bmatrix}.
  $$If $PU+U^*P^\perp$ is not c.n.u., then it has an eigenvalue, say $\zeta$ on $\bT$, the unit circle. But from the above matrix structure of $PU+U^*P^\perp$, we see that $\sigma(PU+U^*P^\perp)=\sigma(A)\cup\sigma(D^*)$. If $\zeta\in\sigma(A)$, then there is a non-trivial space $\cH\subset\operatorname{Range}P$ such that $A=\sbm{\zeta I_{\dim\cH}&0\\0&A'}$. The space $\cH$ clearly reduces $U$. If $\zeta\in\sigma(D^*)$, then we shall get a non-trivial $U$-reducing subspace contained in $\operatorname{Range}P^\perp$.
\end{proof}
The following result is now an easy consequence of Theorem \ref{T:DistMain}, Theorem \ref{T:BKP_APDE} and the fact that $PU+U^*P^\perp$ is a numerical contraction.
\begin{thm}\label{T:PUversion}
For an orthogonal projection $P$ and a unitary $U$ acting on $\bC^d$, the set $\cW_{PU+U^*P^\perp}$ is an algebraic variety that always intersects $\Gamma$. Moreover, the following are equivalent:
\begin{itemize}
\item[(1)]$\mathcal W_{PU+U^*P^\perp}$ is a distinguished variety;
\item[(2)] $PU+U^*P^\perp$ is completely non-unitary, or equivalently, $\sigma(PU+U^*P^\perp)\cap\bT=\emptyset$;
\item[(3)] There is no non-trivial $U$-reducing subspace contained either in $\operatorname{Range}P$ or in $\operatorname{Range}P^\perp$;
\item[(4)] $\cW_{PU+U^*P^\perp}\subset\pi(\bD^2\cup\bT^2\cup\bE^2)$;
\item[(5)] Every irreducible component of $\cW_{PU+U^*P^\perp}$ intersects $\bG$.
\end{itemize}
Conversely, if $W$ is any distinguished variety such that each of its irreducible components meets $\bG$, then there exist a projection $P$ and a unitary $U$ such that $PU+U^*P^\perp$ is c.n.u.\ and $\cW=\cW_{PU+U^*P^\perp}$.
\end{thm}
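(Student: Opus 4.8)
The plan is to obtain everything from Theorem~\ref{T:DistMain}, the Lemma just proved, and Theorem~\ref{T:BKP_APDE}, using the single extra input that $F:=PU+U^*P^\perp$ is a numerical contraction (Lemma 3.10 of \cite{BKS-APDE}). Since $F\in M_d(\bC)$ is then a numerical contraction, Theorem~\ref{T:DistMain} applies verbatim to $\cW_F=\cW_{PU+U^*P^\perp}$: it is an algebraic variety meeting $\Gamma$, and items (1), (2), (4), (5) of the present statement are word for word items (1), (2), (3), (4) of Theorem~\ref{T:DistMain} under the obvious matching. So nothing needs to be proved for those four. The remaining equivalence (2)$\Leftrightarrow$(3) is exactly the content of the preceding Lemma, namely that $PU+U^*P^\perp$ is completely non-unitary precisely when $U$ admits no decomposition of the form \eqref{BadU} over a non-zero subspace. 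This settles the ``moreover'' part.

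For the converse, let $\cW$ be a distinguished variety each of whose irreducible components meets $\bG$. By the converse half of Theorem~\ref{T:BKP_APDE} there are a projection $P_0$ and a unitary $U_0$ on some $\bC^{d_0}$ with $\cW\cap\bG=\cW_{F_0}\cap\bG$, where $F_0=P_0U_0+U_0^*P_0^\perp$. If $F_0$ is not completely non-unitary, the Lemma produces a non-zero $\cH$ over which $U_0$ decomposes as in \eqref{BadU}, and the block computation in that Lemma's proof gives $F_0=(W_1\oplus W_2^*)\oplus(QW+W^*Q^\perp)$ with $Q=P_0|_{\cH^\perp}$ a projection on $\cH^\perp$ and $W=U_0|_{\cH^\perp}$ a unitary on $\cH^\perp$. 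The summand $W_1\oplus W_2^*$ is unitary, so $\cW_{W_1\oplus W_2^*}$ is a finite union of sheets $\{(\beta+\overline\beta p,p):p\in\bC\}$ with $\beta\in\bT$, and by part (iii) of Theorem~\ref{T:Charc_G} these miss $\bG$; hence $\cW_{F_0}\cap\bG=\cW_{QW+W^*Q^\perp}\cap\bG$, while $QW+W^*Q^\perp$ is again of $(P,U)$-form but on a strictly smaller space. Iterating (the dimension strictly drops at each step, and the process cannot reach the zero space since $\cW$ meets $\bG$) yields a completely non-unitary $F=PU+U^*P^\perp$ with $\cW\cap\bG=\cW_F\cap\bG$.

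It remains to promote this to $\cW=\cW_F$. Since $F$ is c.n.u., Theorem~\ref{T:DistMain} tells us that $\cW_F$ is a distinguished variety each of whose irreducible components meets $\bG$; by hypothesis $\cW$ has the same property, and the two varieties coincide on $\bG$. As an irreducible component of either is the Zariski closure of its non-empty, hence dense, intersection with $\bG$, the components must match and $\cW=\cW_F=\cW_{PU+U^*P^\perp}$. This is precisely the ``two irreducible varieties coinciding on $\bG$ must be equal'' argument already used at the end of the proof of Theorem~\ref{T:DistMain}.

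The only genuinely new work lies in the converse, and I expect the main (mild) obstacle to be keeping the special $(P,U)$-form while stripping off the unitary part of $F_0$: a priori the compression of $PU+U^*P^\perp$ to a reducing subspace need not be of the form $P'U'+U'^{*}P'^{\perp}$, and it is only the particular shape of the subspaces appearing in \eqref{BadU} that makes it so, which is why the explicit block formula in the Lemma's proof is indispensable. The second point worth care is that Theorem~\ref{T:BKP_APDE} only delivers agreement on $\bG$; upgrading this to equality of varieties is exactly where the hypothesis that every irreducible component meets $\bG$ is used, and without it the converse fails (cf.\ the Remark following Theorem~\ref{T:DistMain}).
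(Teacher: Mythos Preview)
Your proposal is correct and follows exactly the approach the paper indicates: the paper's proof is the single sentence that the result ``is now an easy consequence of Theorem~\ref{T:DistMain}, Theorem~\ref{T:BKP_APDE} and the fact that $PU+U^*P^\perp$ is a numerical contraction,'' and you have simply unpacked this, supplying the (correct) details for the converse that the paper leaves implicit.
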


\section{Rational inner functions on the symmetrized bidisk}\label{S:RationalFunctions}
If $\Psi$ on $\bD^2$ is an operator-valued bounded analytic function, then it is not difficult to see that the radial limit
$$\Psi(\zeta,\eta):=\lim_{r\to 1-}\Psi(r\zeta,r\eta)$$ exists almost everywhere in $\bT^2$. Since every function on $\bG$ can be viewed as a symmetric function on $\bD^2$, it follows that every bounded analytic function on $\bG$ has a radial limit almost everywhere in $b\bG=\{(z_1+z_2,z_1z_2):z_1,z_2\in\bT\}$. An operator-valued bounded analytic function $\Psi$ on $\bG$ is said to be {\em inner}, if the radial limits are isometric operators a.e.\ in $b\bG$.  See \cite{BK3} for details discussion on inner functions on the symmetrized bidisc or more genreally on the {\em quotient domains}. 

This section aims to establish Theorem \ref{T:RatSol}, a crucial result utilized in the subsequent section. The main tool is an elegant representation formula for bounded analytic functions. The classical version of this formula states that for two Hilbert spaces $\cE$ and $\cF$, a function $\Psi:\bD\to\cB(\cE,\cF)$ is contractive analytic if and only if there is an auxiliary Hilbert space $\cH$ and a unitary operator $U=\sbm{A&B\\C&D}:\sbm{\cE\\ \cH}\to\sbm{\cF\\ \cH}$ such that
$$
\Psi(z)= A+zB(I-zD)^{-1}C.
$$This is called the realization formula for $\Psi$. This result has been extended to polydisk \cite{Agler1988} and various other generalities \cite{Ambrozie, BBF'07, BH'13, DMM-Crell'07}. The realization formula for bounded analytic functions on $\bG$ was found in \cite{AY-R} and \cite{BS} by different methods. In both approaches, the rational functions
$$
\varphi(\alpha,s,p)=\frac{2\alpha p-s}{2-\alpha s} \quad\text{for}\quad\alpha\in\bD \quad\text{and}\quad (s,p)\in\bG
$$have played a significant role. For a fixed $(s,p)\in\bG$, the function $\varphi(\cdot,s,p)$ is analytic in $\bD$ and continuous on $\overline\bD$. Thus for a contractive operator $\tau$, the operator
$$
\varphi(\tau,s,p):=(2\tau p-s)(2-\tau s)^{-1}
$$is well-defined - see for example \cite[Sec.\ 2, Ch.\ III]{Nagy-Foias}. Moreover, the supremum of $\varphi(\cdot,s,p)$ over $\bD$ is not greater than one by item (iv) of Theorem \ref{T:Charc_G}, therefore $\varphi(\tau,s,p)$ is a contraction. The realization formula in the setting of $\bG$ is the following.
\begin{thm}\label{T:RFSymmDsk}
A function $\Psi:\mathbb G\to\cB(\cE,\cF)$ is contractive analytic if and only if there exist a Hilbert space $\mathcal{H}$ and unitary operators
\begin{align}
\tau:\mathcal{H}\to \mathcal{H} \quad \text {and} \quad
U=\begin{bmatrix}
A & B\\
C & D
\end{bmatrix}
:\begin{bmatrix}\cE\\\mathcal{H}\end{bmatrix}\to \begin{bmatrix}\cF\\\mathcal{H}\end{bmatrix}
\end{align}
such that
\begin{align}
\Psi(s,p)= A+B\varphi(\tau,s,p)(I-D\varphi(\tau,s,p))^{-1}C.
\end{align}
\end{thm}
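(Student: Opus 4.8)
\emph{Overview and the ``if'' direction.} The statement is an equivalence, and the plan is to prove the two implications by different means: the sufficiency is a transfer-function estimate, while the necessity is a model/lurking-isometry argument following \cite{AY-R}. For sufficiency, suppose $\cH$, a unitary $\tau$ on $\cH$ and a unitary colligation $U=\sbm{A&B\\C&D}$ are given, and put $\Phi(s,p)=\varphi(\tau,s,p)=(2p\tau-s)(2-s\tau)^{-1}$ for $(s,p)\in\bG$. First I would record that $\Phi$ is a \emph{strict} contraction on $\bG$ depending holomorphically on $(s,p)$: since $(s,p)\in\bG$ forces $|s|<2$ by Theorem~\ref{T:Charc_G}(iv), we have $2-\alpha s\neq0$ for every $\alpha\in\overline{\bD}\supseteq\sigma(\tau)$, so $\varphi(\cdot,s,p)$ is holomorphic on a neighbourhood of $\overline{\bD}$ and the Riesz--Dunford calculus gives $\|\Phi(s,p)\|\le\sup_{\alpha\in\overline{\bD}}|\varphi(\alpha,s,p)|$, which is $<1$ by Theorem~\ref{T:Charc_G}(iv) and compactness. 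Hence $I-D\Phi(s,p)$ is invertible, $\Psi:=A+B\Phi(I-D\Phi)^{-1}C$ is holomorphic on $\bG$, and $\|\Psi(s,p)\|\le 1$ by the standard fact that the transfer function of a contractive colligation, evaluated at a strict contraction, is a contraction (see \cite{Nagy-Foias}, and the polydisk treatment in \cite{Agler1988}).

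\emph{Necessity, Step 1: an Agler-type decomposition adapted to $\bG$.} Let $\Psi:\bG\to\cB(\cE,\cF)$ be contractive analytic. I would first produce a Hilbert space $\cH$, a unitary $\tau$ on $\cH$, and a holomorphic $L:\bG\to\cB(\cH,\cF)$ with
$$
I_{\cF}-\Psi(s,p)\Psi(s',p')^{*}=L(s,p)\big(I_{\cH}-\varphi(\tau,s,p)\varphi(\tau,s',p')^{*}\big)L(s',p')^{*}
$$
for all $(s,p),(s',p')\in\bG$. The key point is that $\{\varphi(\alpha,\cdot):\alpha\in\bT\}$ is a family of \emph{test functions} for $\bG$: by Theorem~\ref{T:Charc_G}(iv) together with the maximum principle, $(s,p)\in\bG$ holds precisely when $\sup_{\alpha\in\bT}|\varphi(\alpha,s,p)|<1$. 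Using this --- either by pulling $\Psi$ back via $\pi$ to a symmetric contractive analytic function on $\bD^{2}$ and invoking Agler's two-variable decomposition, or directly through a Hahn--Banach / weak-$*$ compactness argument over finite subsets of $\bG$ --- one obtains a positive $\cB(\cF)$-valued kernel represented as an integral against the elementary kernels $1-\varphi(\alpha,s,p)\overline{\varphi(\alpha,s',p')}$, as in
$$
I-\Psi(s,p)\Psi(s',p')^{*}=\int_{\bT}\big(1-\varphi(\alpha,s,p)\overline{\varphi(\alpha,s',p')}\big)\,d\mu_{(s,p),(s',p')}(\alpha);
$$
packaging the parameter $\alpha$ into the operator of multiplication by $\alpha$ on an $L^{2}(\mu)$-type space (a unitary, since $\mu$ lives on $\bT$) then yields the single unitary $\tau$ together with $L$. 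This step is the content of the realization theory of \cite{AY-R,BS}, and I expect it to be the main obstacle: organizing the $\bT$-indexed family $\{\varphi(\alpha,\cdot)\}$ into \emph{one} unitary $\tau$ on a single Hilbert space, rather than an infinite direct sum, while keeping $L$ holomorphic in $(s,p)$, is exactly the feature peculiar to the symmetrized bidisk.

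\emph{Necessity, Step 2: lurking isometry and completion to a colligation.} Rearranging the Step-1 identity into
$$
\Psi(s,p)\Psi(s',p')^{*}+L(s,p)L(s',p')^{*}=I_{\cF}+L(s,p)\varphi(\tau,s,p)\varphi(\tau,s',p')^{*}L(s',p')^{*},
$$
one checks that the assignment sending, for $e\in\cF$ and $(s,p)\in\bG$,
$$
\varphi(\tau,s,p)^{*}L(s,p)^{*}e\oplus e\ \longmapsto\ L(s,p)^{*}e\oplus\Psi(s,p)^{*}e
$$
is well defined and isometric between the corresponding subspaces of $\cH\oplus\cF$ and $\cH\oplus\cE$, the norm identity being precisely the rearranged decomposition. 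Enlarging $\cH$ if necessary --- harmless, since $\cH$ is at our disposal, and after a routine unitary dilation if needed --- this isometry extends to a unitary between $\cH\oplus\cF$ and $\cH\oplus\cE$, whose adjoint, reorganized into block form with respect to $\cE\oplus\cH\to\cF\oplus\cH$, is a unitary $U=\sbm{A&B\\C&D}$. Reading off the four intertwining relations encoded in $U$ and eliminating the auxiliary variable --- using that $I-D\varphi(\tau,s,p)$ is invertible on $\bG$, as established in the sufficiency part --- gives
$$
\Psi(s,p)=A+B\varphi(\tau,s,p)\big(I-D\varphi(\tau,s,p)\big)^{-1}C,
$$
which completes the proof. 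Compared with Step~1, the lurking-isometry bookkeeping here and the colligation estimate in the sufficiency part are routine.
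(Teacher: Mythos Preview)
The paper does not actually prove Theorem~\ref{T:RFSymmDsk}: it is stated as a known result, with the sentence immediately preceding it attributing the realization formula to \cite{AY-R} and \cite{BS}. So there is no ``paper's own proof'' to compare against; the theorem functions here as background input to the later results (Theorems~\ref{T:SymRatInn} and \ref{T:RS}).

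That said, your outline is a faithful sketch of the \cite{AY-R} route and is essentially correct. The ``if'' direction is fine as written. In Step~1 you correctly isolate the non-trivial content --- packaging the $\bT$-parametrized test functions $\varphi(\alpha,\cdot)$ into a single unitary $\tau$ --- and you appropriately defer to \cite{AY-R,BS} for it; this is exactly what the present paper does. One small point in Step~2: when you write ``enlarging $\cH$ if necessary'' to promote the lurking isometry to a unitary, you must simultaneously extend $\tau$ to a unitary on the enlarged space (e.g.\ by a direct sum with an arbitrary unitary on the complement), since $\tau$ and the colligation act on the \emph{same} auxiliary space $\cH$. This is harmless but should be said explicitly, as the formula for $\Psi$ involves $\varphi(\tau,s,p)$ on all of $\cH$.
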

We show that a rational inner function on the symmetrized bidisk has a {\em finite dimensional realization formula}, i.e., when the auxiliary Hilbert space $\cH$ in Theorem \ref{T:RFSymmDsk} is of finite dimension. Such a result is known for the bidisk, which we state below. See the papers \cite{Ball-Sad-Vin, Bickel-Knese, Knese, Kummert} for various proofs and generalizations as well as \cite{ABJK} for a recent application of this result in proving Carath\'eodory approximation result. 
\begin{thm}\label{T:BDisk}
Let $\Psi$ be a $d\times d$ matrix-valued analytic function on $\mathbb D^2$. Then the following are equivalent:
\begin{enumerate}
\item[(\text{RI})] $\Psi$ is a rational inner function;
\item[(\text{AD})]There exist analytic functions $F_1,F_2:\mathbb D^2\to \cB(\mathbb C^d,\mathbb C^{d_j})$ such that
$$
I-\Psi(w)^*\Psi(z)=(1-\overline w_1z_1)F_1(w)^*F_1(z)+(1-\overline w_2z_2)F_2(w)^*F_2(z);
$$
\item[(\text{RF})] There exist positive integers $d_1,d_2$, and a unitary
$$
U=\begin{bmatrix} A&B\\C&D \end{bmatrix}=\begin{bmatrix} A&B_1&B_2\\C_1&D_{11}&D_{12}\\C_2&D_{21}&D_{22} \end{bmatrix}:
\begin{bmatrix} \mathbb C^d\\ \mathbb C^{d_1}\\ \mathbb C^{d_2}\end{bmatrix}\to \begin{bmatrix} \mathbb C^d\\ \mathbb C^{d_1}\\ \mathbb C^{d_2}\end{bmatrix}
$$such that
$$
\Psi(z)=A+B\cE_{z}(I-D\cE_z)^{-1}C
$$where $\cE_z=z_1P_1+z_2P_2$ and $P_1,P_2$ are orthogonal projections of $\mathbb C^{d_1+d_2}$ onto $\mathbb C^{d_1}$ and $\mathbb C^{d_2}$, respectively.
\end{enumerate}
Moreover, there is a minimal choice of the integers $d_1,d_2$ in item $(\operatorname{RF})$ above, viz., if $\det \Psi = \tilde g/g$, then $(d_1,d_2)=\deg \tilde g$.
\end{thm}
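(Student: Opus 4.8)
These equivalences are classical, with several proofs in the references cited above; the route I would take establishes the cycle $(\mathrm{RI})\Rightarrow(\mathrm{AD})\Rightarrow(\mathrm{RF})\Rightarrow(\mathrm{RI})$ and then reads the minimal pair $(d_1,d_2)$ off the first implication. Of the three arrows, $(\mathrm{AD})\Rightarrow(\mathrm{RF})$ is the ``lurking isometry'' bookkeeping, $(\mathrm{RF})\Rightarrow(\mathrm{RI})$ is a short computation with the transfer function, and $(\mathrm{RI})\Rightarrow(\mathrm{AD})$ carries essentially all the content --- it is the step in which the two-variable nature of the domain is indispensable.

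\emph{The implication $(\mathrm{AD})\Rightarrow(\mathrm{RF})$.} Rewrite the Agler identity $I-\Psi(w)^*\Psi(z)=\sum_{j=1}^2(1-\overline w_jz_j)F_j(w)^*F_j(z)$ as
$$
\Psi(w)^*\Psi(z)+\sum_{j=1}^2\overline w_jz_j\,F_j(w)^*F_j(z)\;=\;I+\sum_{j=1}^2 F_j(w)^*F_j(z).
$$
This says that the vectors $(h,z_1F_1(z)h,z_2F_2(z)h)$ and $(\Psi(z)h,F_1(z)h,F_2(z)h)$, as $z$ runs over $\mathbb D^2$ and $h$ over $\mathbb C^d$, have the same Gram data, so there is a linear isometry taking the closed span of the first family onto the closed span of the second. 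Since both spans sit inside the \emph{same} finite-dimensional space $\mathbb C^d\oplus\mathbb C^{d_1}\oplus\mathbb C^{d_2}$ --- this is the only place where $d_1,d_2<\infty$ is used --- this isometry extends to a unitary $U=\sbm{A&B\\C&D}$ on that space. Writing $\cE_z=z_1P_1+z_2P_2$ and $F=(F_1,F_2)$, the relation $U(h,\cE_zF(z)h)=(\Psi(z)h,F(z)h)$ gives $(I-D\cE_z)F(z)=C$, hence $F(z)=(I-D\cE_z)^{-1}C$ (the operator $I-D\cE_z$ is invertible on $\mathbb D^2$, where $\|D\cE_z\|<1$), and therefore $\Psi(z)=A+B\cE_z(I-D\cE_z)^{-1}C$, which is $(\mathrm{RF})$.

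\emph{The implication $(\mathrm{RF})\Rightarrow(\mathrm{RI})$.} The formula $\Psi(z)=A+B\cE_z(I-D\cE_z)^{-1}C$ is a rational function of $(z_1,z_2)$, holomorphic on $\mathbb D^2$ because $\|D\cE_z\|\le\|D\|\max(|z_1|,|z_2|)<1$ there. Setting $y=(I-D\cE_z)^{-1}Cx$, one has $Cx=y-D\cE_zy$ and $U(x,\cE_zy)=(\Psi(z)x,y)$, so unitarity of $U$ yields $\|x\|^2+\|\cE_zy\|^2=\|\Psi(z)x\|^2+\|y\|^2$. For $z\in\mathbb D^2$ this gives $\|\Psi(z)x\|\le\|x\|$ (and, by polarization, the Agler decomposition of the previous step back again), so $\Psi$ is contractive. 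For $z\in\mathbb T^2$ the operator $\cE_z$ is unitary, and $I-D\cE_z$ is invertible for a.e.\ such $z$ since the polynomial $\det(I-D\cE_z)$ equals $1$ at $z=0$ and hence vanishes on a set of measure zero in $\mathbb T^2$; at such a $z$, $\|\cE_zy\|=\|y\|$ forces $\|\Psi(z)x\|=\|x\|$, i.e.\ $\Psi(z)$ is a $d\times d$ isometry, hence unitary. Thus $\Psi$ is rational with unitary boundary values a.e.\ on $\mathbb T^2$, that is, rational inner.

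\emph{The implication $(\mathrm{RI})\Rightarrow(\mathrm{AD})$ and minimality.} This is the crux. Clearing denominators, write $\Psi=g^{-1}Q$ with $Q$ a $d\times d$ matrix polynomial and $g$ a scalar polynomial of least bidegree having no zeros in $\mathbb D^2$; innerness gives $Q(z)^*Q(z)=|g(z)|^2I$ on $\mathbb T^2$ and $\det\Psi=\tilde g/g$ with $\tilde g$ the reflection of $g$, while contractivity of $\Psi$ gives $N(z,z)\ge 0$ on $\mathbb D^2$ for the matrix polynomial $N(z,w):=\overline{g(w)}g(z)\,I-Q(w)^*Q(z)$ in $(z,\overline w)$, which moreover vanishes on the diagonal of $\mathbb T^2$. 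The key lemma is the \emph{two-variable Agler decomposition} (Kummert; Cole--Wermer; Knese): such an $N$ splits as $N(z,w)=(1-\overline w_1z_1)G_1(z,w)+(1-\overline w_2z_2)G_2(z,w)$ with each $G_j$ a polynomial kernel that is positive semidefinite on $\mathbb D^2$; a Kolmogorov factorization $G_j(z,w)=\overline{g(w)}g(z)\,F_j(w)^*F_j(z)$ followed by division by $\overline{g(w)}g(z)$ then yields $(\mathrm{AD})$, with $F_j$ of finite rank precisely because $G_j$ is a polynomial. I would prove the key lemma by slicing: freezing $z_2=w_2=\zeta\in\mathbb T$ reduces $N$ to the one-variable Hermitian form attached to the rational inner function $z_1\mapsto\Psi(z_1,\zeta)$, which by the classical one-variable theory factors as $(1-\overline w_1z_1)$ times a positive kernel in $(z_1,w_1)$ of rank at most $\deg_{z_1}\tilde g$; the genuine difficulty is to perform these slice factorizations \emph{uniformly} so as to obtain polynomial dependence on $(z_2,\overline w_2)$ --- an induction on the bidegree with no analogue in three or more variables, and this is where I expect the main obstacle to lie. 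Finally, for the minimal dimensions one passes to the observable and controllable realization obtained in the first step; the size of its $j$-th block then equals the rank of the associated one-variable transfer operator in the $z_j$-direction, which a dimension count on the de Branges--Rovnyak space $\cH(\Psi)$ identifies with the $z_j$-degree of $\tilde g$, giving $(d_1,d_2)=\deg\tilde g$.
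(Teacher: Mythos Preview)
The paper does not prove this theorem at all: it is quoted verbatim as a known result about the bidisk, with the sentence ``Such a result is known for the bidisk, which we state below. See the papers \cite{Ball-Sad-Vin, Bickel-Knese, Dritschel, Knese, Kummert} for various proofs and generalizations.'' There is therefore nothing to compare your argument against in the paper itself; the authors use Theorem~\ref{T:BDisk} as a black box in the proofs of Theorem~\ref{T:SymRatInn} and Theorem~\ref{T:RS}.

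As for your sketch on its own merits: the implications $(\mathrm{AD})\Rightarrow(\mathrm{RF})$ and $(\mathrm{RF})\Rightarrow(\mathrm{RI})$ are correctly and cleanly handled; these are indeed the routine directions. For $(\mathrm{RI})\Rightarrow(\mathrm{AD})$ you correctly identify that this is where all the work lies, and you are honest that your slicing-plus-induction idea is where you ``expect the main obstacle to lie.'' That is accurate --- obtaining the polynomial dependence in the second variable from the one-variable slice factorizations is precisely the nontrivial step, and your outline does not actually carry it out. The cited references handle it by quite different means (Kummert via a state-space/Cayley argument, Knese via sums-of-squares and Christoffel--Darboux-type identities, Bickel--Knese via Hilbert-space methods on $\cH(\Psi)$), none of which is really a ``slice then induct'' argument in the form you describe. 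Your minimality paragraph is also more of a gesture than an argument: the identification of the block sizes with the bidegree of $\tilde g$ requires a genuine rank computation (as in Knese or Bickel--Knese), not just a dimension count on $\cH(\Psi)$. So your write-up is a reasonable roadmap of the standard cycle, but the substantive implication and the minimality claim remain at the level of citations rather than proofs --- which, to be fair, is exactly how the paper itself treats the theorem.
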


A subset $\mathbb F$ of $\mathbb D^2$ is said to be {\em symmetric}, if $(z_2,z_1)\in\mathbb F$, whenever $(z_1,z_2)\in\mathbb F$. Given a symmetric subset $\mathbb F$ of $\mathbb D^2$, a function $g$ on $\mathbb F\times\mathbb F$ is said to be {\em doubly symmetric} if it is symmetric in both the coordinates, i.e., denoting $z^\sigma:=(z_2,z_1)$ for $z=(z_1,z_2)$,
$$g(z,w)=g(z^\sigma, w)=g(z, w^\sigma)=g(z^\sigma,w^\sigma)$$ for all $z,w\in\mathbb F$.

The following lemma was proved for the scalar-valued functions in \cite[Lemma 2.3]{AY-R}, and it was observed that the proof works for the operator-valued case as well. We have observed that the proof also works for functions acting on an arbitrary symmetric subset $\mathbb F$ of $\bD^2$ instead of the whole domain. The technique, however, is exactly the same as the one for the scalar case. Therefore we omit the proof.
\begin{lemma}\label{L:DoubSym}
Let $\mathbb F$ be a symmetric subset of $\mathbb D^2$ and $G:\mathbb F\times\mathbb F\to \mathbb C^{d\times d}$ be a doubly symmetric function such that there exist positive integers $d_1,d_2$ and functions $F_j:\mathbb F\to \cB(\mathbb C^d,\mathbb{C}^{d_j})$ such that
\begin{align}\label{G}
G(z, w)= (1-\overline{w_1}z_1)F_1(w)^*F_1(z)+ (1-\overline{w_2}z_2)F_2(w)^*F_2(z)
\end{align}
for all $z,w\in\mathbb F$. Then there exist a unitary operator $\tau$ on $\mathbb{C}^{d_1+d_2}$ and a function $F:\pi(\mathbb{F})\rightarrow\cB(\mathbb C^d,\mathbb{C}^{d_1+d_2})$ such that for all $z,w\in\mathbb F$,
\begin{align}\label{finalG}
G(z, w)=F(t,q)^*(I-\varphi(\tau,t,q)^*\varphi(\tau,s,p))F(s,p),
\end{align}
where $(s,p)=\pi(z)$ and $(t,q)=\pi(w)$.

\end{lemma}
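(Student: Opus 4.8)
\emph{Proof plan.} The plan is to recast \eqref{G} as a single linear--pencil identity, to convert the resulting ``bidisk pencil'' into the ``$\varphi$--pencil'' $\varphi(\tau,\cdot)$ by a lurking--isometry argument in which the double symmetry of $G$ is used essentially, and finally to observe that the same symmetry makes the coefficient function so produced a genuine function of $(s,p)=\pi(z)$. This follows the scalar argument of \cite[Lemma~2.3]{AY-R} verbatim; nothing below is sensitive to $d>1$.

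First I would repackage the hypothesis. Let $P_1,P_2$ be the orthogonal projections of $\bC^{d_1+d_2}$ onto $\bC^{d_1}$ and $\bC^{d_2}$, put $\mathbf F(z):=\sbm{F_1(z)\\ F_2(z)}\in\cB(\bC^d,\bC^{d_1+d_2})$ and $\cE_z:=z_1P_1+z_2P_2$. Then \eqref{G} becomes
\[
G(z,w)=\mathbf F(w)^*(I-\cE_w^*\cE_z)\mathbf F(z),\qquad\text{i.e.}\qquad \mathbf F(w)^*\mathbf F(z)=G(z,w)+(\cE_w\mathbf F(w))^*(\cE_z\mathbf F(z)).
\]
Writing $z^\sigma=(z_2,z_1)$, the equalities $G(z,w)=G(z^\sigma,w)$ and $G(z,w)=G(z,w^\sigma)$ from double symmetry, after cancelling $G$, yield the compatibility relations
\begin{align*}
(\mathbf F(w)-\mathbf F(w^\sigma))^*\mathbf F(z)&=(\cE_w\mathbf F(w)-\cE_{w^\sigma}\mathbf F(w^\sigma))^*(\cE_z\mathbf F(z)),\\
\mathbf F(w)^*(\mathbf F(z)-\mathbf F(z^\sigma))&=(\cE_w\mathbf F(w))^*(\cE_z\mathbf F(z)-\cE_{z^\sigma}\mathbf F(z^\sigma)),
\end{align*}
which say precisely that the partially defined assignment $\cE_z\mathbf F(z)x\mapsto\mathbf F(z)x$ intertwines the flip $z\leftrightarrow z^\sigma$ on its two sides.

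The algebraic bridge between the two pencils is provided by the elementary identities (valid for every unitary $\tau$ on $\bC^{d_1+d_2}$ and every $(s,p)=\pi(z_1,z_2)\in\bG$, and verified by multiplying out)
\[
\varphi(\tau,s,p)+z_iI=(z_i-z_j)(I-z_i\tau)(2I-s\tau)^{-1}\ \ (\{i,j\}=\{1,2\}),\qquad 2\varphi(\tau,s,p)+sI=-(z_1-z_2)^2\,\tau(2I-s\tau)^{-1},
\]
which recover the individual roots $z_1,z_2$ from $\varphi(\tau,\cdot)$ up to the anti--symmetric scalar $z_1-z_2$. Armed with these, I would run a lurking--isometry (Kolmogorov) construction starting from $\mathbf F(w)^*\mathbf F(z)=G(z,w)+(\cE_w\mathbf F(w))^*(\cE_z\mathbf F(z))$: the two compatibility relations supply exactly the intertwining needed for the relevant isometry to be well defined, while the double symmetry is what lets the flipped pencil $\cE_{z^\sigma}$ be absorbed into a unitary conjugation without enlarging the state space, so that completing the isometry to a unitary keeps the dimension equal to $d_1+d_2$. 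This produces a unitary $\tau$ on $\bC^{d_1+d_2}$ together with an invertible matrix function $\Theta$ on $\mathbb F$ with
\[
I-\cE_w^*\cE_z=\Theta(w)^*\bigl(I-\varphi(\tau,t,q)^*\varphi(\tau,s,p)\bigr)\Theta(z)\qquad\text{and}\qquad \Theta(z)\mathbf F(z)=\Theta(z^\sigma)\mathbf F(z^\sigma),
\]
where $(s,p)=\pi(z)$ and $(t,q)=\pi(w)$. Since $\mathbb F$ is symmetric, the second identity lets one define $F$ on $\pi(\mathbb F)$ by $F(\pi(z)):=\Theta(z)\mathbf F(z)$; substituting this into the first identity and sandwiching by $\mathbf F(w)^*$ and $\mathbf F(z)$ yields \eqref{finalG}.

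The hard part, I expect, is the construction of $\tau$ --- equivalently, arranging that the lurking isometry be the functional calculus of a single unitary of size \emph{exactly} $d_1+d_2$. The naive attempt fails: if one takes $\tau$ (up to unitary equivalence) to be a block--scalar unitary $\bigoplus_k\mu_kI$ and tries $F$ built from the symmetrised datum $\tfrac12(\mathbf F(z)+\mathbf F(z^\sigma))$, then matching all the cross--kernels forces a flip identity $\Sigma\bigl(z_1\mathbf F(z)-z_2\mathbf F(z^\sigma)\bigr)=\mathbf F(z)-\mathbf F(z^\sigma)$ for a unitary $\Sigma$, whose isometry condition is \emph{not} a consequence of \eqref{G} together with double symmetry. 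One therefore needs the genuine Hilbert--space construction, arranged so that both the compatibility relations and the dimension bookkeeping --- this is where double symmetry re--enters, dictating how the two coordinate test functions $z_1,z_2$ of $\bD^2$ must be spread over the circle $\{\varphi(\omega,\cdot,\cdot):\omega\in\bT\}$ of test functions of $\bG$ --- hold at once. Once $\tau$ and $\Theta$ are in hand, the well--definedness of $F$ on $\pi(\mathbb F)$ and the identity \eqref{finalG} are routine.
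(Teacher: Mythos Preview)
Your proposal is a plan rather than a proof: the decisive step---producing the unitary $\tau$ and the transition map $\Theta$---is left open, as you yourself flag (``The hard part, I expect, is the construction of $\tau$''; ``the naive attempt fails''; ``One therefore needs the genuine Hilbert--space construction''). The intermediate identity you aim for,
\[
I-\cE_w^*\cE_z=\Theta(w)^*\bigl(I-\varphi(\tau,t,q)^*\varphi(\tau,s,p)\bigr)\Theta(z)\qquad(\Theta\ \text{invertible}),
\]
would be a \emph{pointwise} fact on $\bC^{d_1+d_2}$, entirely independent of $G$, $F_1$, $F_2$ and $\mathbb F$; a lurking isometry that \emph{starts} from the $G$-dependent relation $\mathbf F(w)^*\mathbf F(z)=G(z,w)+(\cE_w\mathbf F(w))^*(\cE_z\mathbf F(z))$ cannot, as a matter of logic, output a $G$-independent identity. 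Even granting such $\tau,\Theta$ a priori, the companion requirement $\Theta(z)\mathbf F(z)=\Theta(z^\sigma)\mathbf F(z^\sigma)$ is a constraint on the particular $F_1,F_2$ and does not follow from the pointwise identity together with double symmetry of $G$. The two displayed properties thus pull in different directions, and no single construction delivering both is indicated.

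By contrast, the argument the paper defers to (\cite[Lemma~2.3]{AY-R}) is explicit rather than abstract. Its engine is the elementary identity, valid for each $\omega\in\bT$ with $(s,p)=\pi(z)$, $(t,q)=\pi(w)$,
\[
(2-\omega s)\,\overline{(2-\omega t)}\,\bigl(1-\overline{\varphi(\omega,t,q)}\,\varphi(\omega,s,p)\bigr)=2(1-\overline{w_1}z_1)(1-\omega z_2)\overline{(1-\omega w_2)}+2(1-\overline{w_2}z_2)(1-\omega z_1)\overline{(1-\omega w_1)},
\]
together with its images under $z\mapsto z^\sigma$ and $w\mapsto w^\sigma$. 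One averages the four Agler decompositions of $G$ supplied by double symmetry and then reads off $\tau$ and $F$ \emph{directly} from these identities---no lurking isometry is used to manufacture $\tau$, and double symmetry enters only through the averaging that makes the resulting coefficient function descend to $\pi(\mathbb F)$. This explicit algebraic bridge between the bidisk pencil and the $\varphi$-pencil is the missing idea in your plan.
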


The following theorem shows that the rational inner functions on $\bG$ are exactly the ones that have a finite dimensional realization formula.
\begin{thm}\label{T:SymRatInn}
Let $\Psi$ be a $d\times d$ matrix-valued analytic function on $\mathbb G$. Then the following are equivalent:
\begin{enumerate}
\item[(\text{RI})]$\Psi$ is rational and inner function;
\item[(AD)]There exist positive integers $d_1,d_2$, a unitary operator
\begin{align*}
\tau:\begin{bmatrix}\mathbb C^{d_1}\\ \mathbb C^{d_2}\end{bmatrix}\to\begin{bmatrix}\mathbb C^{d_1}\\ \mathbb C^{d_2}\end{bmatrix}
\end{align*} and an analytic function $F:\mathbb G\to \cB(\mathbb C^d,\mathbb C^{d_1+d_2})$ such that
\begin{align}\label{AD}
I-\Psi(t,q)^*\Psi(s,p)= F(t,q)^*(I-\varphi(\tau,t,q)^*\varphi (\tau,s,p))F(s,p);
\end{align}
\item[(\text{RF})] There exist positive integers $d_1,d_2$ and unitary operators
\begin{align*}
\tau:\begin{bmatrix}\mathbb C^{d_1}\\ \mathbb C^{d_2}\end{bmatrix}\to\begin{bmatrix}\mathbb C^{d_1}\\ \mathbb C^{d_2}\end{bmatrix} \quad \text{and}\quad
U=\begin{bmatrix} A&B\\C&D \end{bmatrix}=\begin{bmatrix} A&B_1&B_2\\C_1&D_{11}&D_{12}\\C_2&D_{21}&D_{22} \end{bmatrix}:
\begin{bmatrix} \mathbb C^d\\ \mathbb C^{d_1}\\ \mathbb C^{d_2}\end{bmatrix}\to \begin{bmatrix} \mathbb C^d\\ \mathbb C^{d_1}\\ \mathbb C^{d_2}\end{bmatrix}
\end{align*}such that
\begin{align}\label{SymPsi}
\Psi(s,p)=A+B\varphi(\tau,s,p)(I-D\varphi(\tau,s,p))^{-1}C.
\end{align}

\end{enumerate}

Moreover, writing $\det \Psi\circ \pi = \tilde g/g$, the integers in items  $(\operatorname{AD})$ and $(\operatorname{RF})$ can be chosen to be $(d_1,d_2)=\deg \tilde g$; and this is the minimal possible integers for such a representation.
\end{thm}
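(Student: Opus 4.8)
The plan is to prove the chain $(\mathrm{RI})\Rightarrow(\mathrm{AD})\Rightarrow(\mathrm{RF})\Rightarrow(\mathrm{RI})$ by transferring data back and forth between $\bG$ and $\bD^2$ along the symmetrization map $\pi$, with Theorem \ref{T:BDisk} doing the work on the bidisk and Lemma \ref{L:DoubSym} acting as the bridge. For $(\mathrm{RI})\Rightarrow(\mathrm{AD})$ I would first note that $\Psi\circ\pi$ is a $d\times d$ rational matrix function on $\bD^2$ which is inner there: inner-ness on $\bG$ is defined precisely through the radial limits of the symmetric function $\Psi\circ\pi$ on $\bT^2$, so nothing is lost in passing to $\bD^2$. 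Theorem \ref{T:BDisk}, direction $(\mathrm{RI})\Rightarrow(\mathrm{AD})$, then gives analytic $F_1,F_2:\bD^2\to\cB(\bC^d,\bC^{d_j})$ with $I-(\Psi\circ\pi)(w)^*(\Psi\circ\pi)(z)=(1-\overline{w_1}z_1)F_1(w)^*F_1(z)+(1-\overline{w_2}z_2)F_2(w)^*F_2(z)$, where one may take the minimal pair $(d_1,d_2)=\deg\tilde g$, $\det(\Psi\circ\pi)=\tilde g/g$. The left side, being $I-(\Psi\circ\pi)(w)^*(\Psi\circ\pi)(z)$ with $\Psi\circ\pi$ symmetric, is doubly symmetric, so Lemma \ref{L:DoubSym} (with $\mathbb F=\bD^2$) produces a unitary $\tau$ on $\bC^{d_1+d_2}$, graded by the two summands, and a function $F:\bG\to\cB(\bC^d,\bC^{d_1+d_2})$ realizing \eqref{AD} with the same $(d_1,d_2)$. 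Analyticity of $F$ is not needed yet (the next step uses only pointwise values), and will be recovered automatically at the level of \eqref{RF} below.

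For $(\mathrm{AD})\Rightarrow(\mathrm{RF})$ I would run a finite-dimensional lurking-isometry argument. Rearranging \eqref{AD} as $\langle\varphi(\tau,s,p)F(s,p)x,\varphi(\tau,t,q)F(t,q)y\rangle+\langle x,y\rangle=\langle\Psi(s,p)x,\Psi(t,q)y\rangle+\langle F(s,p)x,F(t,q)y\rangle$ for all $(s,p),(t,q)\in\bG$ and $x,y\in\bC^d$ shows that $(\Psi(s,p)x,F(s,p)x)\mapsto(x,\varphi(\tau,s,p)F(s,p)x)$ is a well-defined isometry between two subspaces of the finite-dimensional space $\bC^d\oplus\bC^{d_1+d_2}$; since the two subspaces lie in the same ambient space their orthocomplements have equal dimension, so the map extends to a unitary $V$, and $U:=V^*$, written as $\sbm{A&B\\C&D}$ over $\bC^d\oplus(\bC^{d_1}\oplus\bC^{d_2})$, satisfies $U(x,\varphi(\tau,s,p)F(s,p)x)=(\Psi(s,p)x,F(s,p)x)$. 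Reading off the two rows gives $Cx=(I-D\varphi(\tau,s,p))F(s,p)x$ and $\Psi(s,p)x=Ax+B\varphi(\tau,s,p)F(s,p)x$; here $I-D\varphi(\tau,s,p)$ is invertible because $\sup_{\alpha\in\overline\bD}|\varphi(\alpha,s,p)|<1$ on $\bG$ by item (iv) of Theorem \ref{T:Charc_G} and compactness (so $\|\varphi(\tau,s,p)\|<1$, $\tau$ being a contraction) together with $\|D\|\le1$. Substituting yields \eqref{SymPsi}, and then $F=(I-D\varphi(\tau,\cdot))^{-1}C$ is manifestly analytic and satisfies \eqref{AD}, which also vindicates the word "analytic" in item $(\mathrm{AD})$.

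For $(\mathrm{RF})\Rightarrow(\mathrm{RI})$: rationality of $\Psi$ is immediate since $\varphi(\tau,s,p)=(2p\tau-sI)(2I-s\tau)^{-1}$ is a rational matrix function of $(s,p)$ and \eqref{SymPsi} composes rational functions. For inner-ness, unitarity of $U$ gives the transfer-function identity $I-\Psi(t,q)^*\Psi(s,p)=F(t,q)^*(I-\varphi(\tau,t,q)^*\varphi(\tau,s,p))F(s,p)$ on $\bG$, with $F=(I-D\varphi(\tau,\cdot))^{-1}C$; a.e.\ on $b\bG$ the point is $\pi(\zeta_1,\zeta_2)$ with $\zeta_1,\zeta_2\in\bT$, $\zeta_1\neq\zeta_2$, for which one computes $|p|=1$, $\overline s=\overline p\,s$ and hence $\varphi(\tau,s,p)^*\varphi(\tau,s,p)=I=\varphi(\tau,s,p)\varphi(\tau,s,p)^*$ (using $|s|<2$ for invertibility of $2I-s\tau$). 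Taking radial limits in the identity, which are legitimate a.e.\ since the diagonal of \eqref{AD} already shows $\Psi$ is a contraction on $\bG$, gives $I-\Psi(s,p)^*\Psi(s,p)=0$ a.e.\ on $b\bG$, so $\Psi$ is inner. For the minimality statement: the value $\deg\tilde g$ is attained by the construction in the first paragraph. For the lower bound I would take any representation \eqref{AD} with $\tau$ on $\bC^{d_1}\oplus\bC^{d_2}$, pull it back through $\pi$, and expand $I-\varphi(\tau,\pi(w))^*\varphi(\tau,\pi(z))=(1-\overline{w_1}z_1)G_1(w)^*G_1(z)+(1-\overline{w_2}z_2)G_2(w)^*G_2(z)$ with $G_j$ valued in $\cB(\bC^{d_1+d_2},\bC^{d_j})$ — a grading-respecting "unsymmetrization" of the kernel attached to $\varphi(\tau,\cdot)$ — and then compose with $F\circ\pi$ to obtain a bidisk Agler decomposition of $\Psi\circ\pi$ of type $(d_1,d_2)$; minimality in Theorem \ref{T:BDisk} then forces $\deg\tilde g\le(d_1,d_2)$ coordinatewise, so $\deg\tilde g$ is the minimal pair.

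The step I expect to be the main obstacle is producing the grading-preserving unsymmetrization $I-\varphi(\tau,\pi(w))^*\varphi(\tau,\pi(z))=\sum_{j=1}^{2}(1-\overline{w_j}z_j)G_j(w)^*G_j(z)$ with defect dimensions exactly $d_1,d_2$: this is the converse mechanism to Lemma \ref{L:DoubSym}, it is precisely what ties the "size" of a $\bG$-realization to that of a $\bD^2$-realization, and once it is available both the minimality claim and a clean alternative proof of inner-ness in $(\mathrm{RF})\Rightarrow(\mathrm{RI})$ follow by quoting Theorem \ref{T:BDisk}. A lesser, routine nuisance is controlling the resolvent $(I-D\varphi(\tau,s,p))^{-1}$ on the null subset of $b\bG$ where $\det(I-D\varphi(\tau,s,p))$ vanishes or $|s|=2$; this is handled, as indicated, by the fact that $\Psi$ is already known to be a rational contraction on $\bG$, so the boundary identity holds off a set of measure zero, which suffices for inner-ness.
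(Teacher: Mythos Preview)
Your proposal is correct and follows essentially the same route as the paper: $(\mathrm{RI})\Rightarrow(\mathrm{AD})$ by applying Theorem~\ref{T:BDisk} to $\Psi\circ\pi$ and then Lemma~\ref{L:DoubSym}; $(\mathrm{AD})\Rightarrow(\mathrm{RF})$ by a finite-dimensional lurking-isometry argument; and $(\mathrm{RF})\Rightarrow(\mathrm{RI})$ via the transfer-function identity together with the fact that $\varphi(\tau,s,p)$ is unitary on $b\bG$. Your version is in fact slightly more careful than the paper's in two places --- you justify invertibility of $I-D\varphi(\tau,s,p)$ on $\bG$ explicitly, and you handle the measure-zero set on $b\bG$ where $|s|=2$ --- while the paper simply asserts these points.

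On the ``moreover'' clause, the paper's entire argument is the sentence ``follows from that of Theorem~\ref{T:BDisk}''. Your more detailed plan --- achievability by construction, plus a lower bound via an unsymmetrization $I-\varphi(\tau,\pi(w))^*\varphi(\tau,\pi(z))=\sum_j(1-\overline{w_j}z_j)G_j(w)^*G_j(z)$ with defect dimensions exactly $(d_1,d_2)$ --- is a genuine addition, and you are right to flag that unsymmetrization step as the nontrivial point: the paper does not supply it either. So your proposal is at least as complete as the original on this score.
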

\begin{proof}The strategy is to follow the path (RI)$\Rightarrow$(AD)$\Rightarrow$(RF)$\Rightarrow$(RI).

For (RI)$\Rightarrow$(AD), the idea is to apply Theorem \ref{T:BDisk} to the function $\tilde{\Psi}:\mathbb{D}^2\to\mathbb{C}^{d\times d}$ defined as
$$\tilde{\Psi}(z_1, z_2)= \Psi(z_1+ z_2, z_1z_2) \text { for all } (z_1, z_2)\in{\mathbb{D}^2}.$$
Clearly, $\tilde{\Psi}$ is rational and inner. Therefore by (RI)$\Rightarrow$(AD) of Theorem \ref{T:BDisk}, there exist positive integers $d_1,d_2$ and analytic functions $F_j:\mathbb{D}^2\to \cB(\mathbb C^d,\mathbb{C}^{d_j}), j=1, 2,$ such that
\begin{align*}
I-\tilde{\Psi}(w)^*\tilde{\Psi}(z)=(1-\overline{w_1}z_1)F_1(w)^*F_1(z)+(1-\overline{w_2}z_2)F_2(w)^*F_2(z)
\end{align*}
 for all $z,w\in\mathbb{D}^2$. Since the function $(z,w)\mapsto I-\tilde{\Psi}(w)^*\tilde{\Psi}(z)$ is doubly symmetric on $\mathbb D^2\times\mathbb D^2$, by Lemma \ref{L:DoubSym} there exist a unitary operator $\tau$ on $\mathbb{C}^{d_1+d_2}$ and an analytic map $F:\mathbb{G}\rightarrow\cB(\mathbb{C}^{d_1+d_2},\mathbb C^d)$ such that for all $z,w\in\mathbb{D}^2$,
$$
I-\tilde{\Psi}(w)^*\tilde{\Psi}(z)=F(t,q)^*(I-\varphi(\tau,t,q)^*\varphi(\tau,s,p))F(s,p)=I-{\Psi}(t, q)^*{\Psi}(s, p),
$$
where $(s,p)=\pi(z)$ and $(t,q)=\pi(w)$. This establishes (AD).

(AD)$\Rightarrow$(RF) Rearrange  equation \eqref{AD} to obtain
$$I+F(t, q)^*\varphi(\tau,t,q)^*\varphi (\tau,s,p))F(s,p)=\Psi(t,q)^*\Psi(s,p)+ F(t,q)^*F(s, p).
$$ What follows is a standard lurking isometry argument. Indeed, the above identity implies that there is a unitary 
	$V : \mathcal{D}_{\mathcal{V}} \to \mathcal{R}_{\mathcal{V}}$ where 
\begin{align*}
&	\mathcal{D}_{\mathcal{V}} =\overline{\operatorname{span}} \left\{ \begin{bmatrix}
		I \\
		\varphi(\tau, s, p)F(s, p)
	\end{bmatrix}\eta: (s, p)\in\bG \mbox{ and } \eta\in\mathbb{C}^d\right\} \text{ and}\\
&	\mathcal{R}_{\mathcal{V}}  =\overline{\operatorname{span}} \left\{ \begin{bmatrix}
		\Psi(s,p)\\
	F(s, p)
	\end{bmatrix}\eta: (s, p)\in\bG \mbox{ and } \eta\in\mathbb{C}^d\right\}.
\end{align*}
It could happen that $\mathcal{D}_{\mathcal{V}}$ and/or $\mathcal{R}_{\mathcal{V}}$ is not the whole space. As $V$ is unitary, we know that $\mathcal{D}_{\mathcal{V}}$ and $\mathcal{R}_{\mathcal{V}}$ are of the same dimension. Since the ambient space $\begin{bmatrix}
		\mathbb{C}^d \\ \mathbb{C}^{d_1 + d_2}\end{bmatrix}$ is finite-dimensional, it follows that the original $V : \mathcal{D}_{\mathcal{V}} \to \mathcal{R}_{\mathcal{V}}$ can be extended to a unitary
	\[
	\begin{bmatrix}
		A & B \\
		C & D
	\end{bmatrix} : \mathbb{C}^{d_1 + d_2} \to \mathbb{C}^{d_1 + d_2}.
	\]such that for all $(s,p)\in\mathbb{G}$,
	$$\begin{bmatrix}
		A & B \\
		C & D
	\end{bmatrix}
	\begin{bmatrix}
		I \\
		\varphi(\tau, s, p)F(s, p)
	\end{bmatrix}=\begin{bmatrix}
		\Psi(s, p)\\
		F(s, p)
	\end{bmatrix}.
	$$
Equivalently,
\begin{align*}
A+B\varphi(\tau, s, p)F(s, p)&=\Psi(s, p)\\
C+D\varphi(\tau. s, p)F(s, p)&= F(s, p).
\end{align*}
From these two equations, one easily eliminates $F(s,p)$ to obtain
$$\Psi(s,p)=A+B\varphi(\tau,s,p)(I-D\varphi(\tau,s,p))^{-1}C.$$

 (RF)$\Rightarrow$(RI)
 The rationality of $\Psi$ is easily read off from the expression \eqref{SymPsi}, while the assertion of innerness follows from a straightforward computation that leads to the identity
$$
I-\Psi(s,p)^*\Psi(s,p)=C^*(I-\varphi(\tau,s,p)^*D^*)^{-1}\left(I-\varphi(\tau,s,p)^*\varphi(\tau,s,p)\right)(I-D\varphi(\tau,s,p))^{-1}C
$$for every $(s,p)\in\mathbb G$. Now a straightforward computation shows that $\varphi(\tau,s,p)$ is unitary whenever $(s,p)$ is in $b\bG$ and $\tau$ is unitary. Thus in view of the above equation, the assertion in (RI) follows. The `moreover' part follows from that of Theorem \ref{T:BDisk}.
\end{proof}

Finally, the takeaway of this section is the following result.
\begin{thm}\label{T:RS}
A solvable matrix Nevanlinna--Pick problem on the symmetrized bidisk has a rational inner solution.
\end{thm}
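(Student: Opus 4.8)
The plan is to reduce everything to the bidisk, where a \emph{finite} Agler decomposition is available, and then to transport the resulting data down to $\bG$ by means of Lemma \ref{L:DoubSym} and the implication $(\operatorname{RF})\Rightarrow(\operatorname{RI})$ of Theorem \ref{T:SymRatInn}. After padding with zero rows and columns we may assume the targets are $d\times d$ matrices $W_1,\dots,W_N$ prescribed at distinct nodes $\lambda_1,\dots,\lambda_N\in\bG$, and, since the problem is solvable, there is a contractive analytic $\Psi:\bG\to M_d(\bC)$ with $\Psi(\lambda_j)=W_j$. Write $\lambda_j=\pi(\mu_j)$ with $\mu_j\in\bD^2$, so that the $\pi$-fibre over $\lambda_j$ is $\{\mu_j,\mu_j^\sigma\}$, and put $\mathbb F:=\{\mu_j,\mu_j^\sigma:1\le j\le N\}$. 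Then $\Psi\circ\pi$ is a symmetric contractive analytic $M_d(\bC)$-valued function on $\bD^2$ taking the value $W_j$ at both points of the $j$-th fibre; in particular the matrix Pick problem on $\bD^2$ with node set $\mathbb F$ and these targets is solvable. (The only coincidences inside $\mathbb F$ come from royal nodes $\lambda_j\in\cR$, for which $\mu_j=\mu_j^\sigma$; distinctness of the $\lambda_j$ rules out all others.)

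First I would invoke the matrix-valued Agler solvability criterion for the bidisk \cite{Agler1988,Ball-Trent}: solvability of this $\bD^2$-problem produces positive integers $d_1,d_2$ and maps $F_\ell:\mathbb F\to\cB(\bC^d,\bC^{d_\ell})$, $\ell=1,2$, such that, writing $W_{(z)}$ for the target attached to $z\in\mathbb F$,
$$
G(z,w):=I-W_{(w)}^*W_{(z)}=(1-\overline{w_1}z_1)F_1(w)^*F_1(z)+(1-\overline{w_2}z_2)F_2(w)^*F_2(z)
$$
for all $z,w\in\mathbb F$. Because the targets are constant along $\pi$-fibres, $G$ is doubly symmetric on the symmetric set $\mathbb F$, so Lemma \ref{L:DoubSym} applies and yields a unitary $\tau$ on $\bC^{d_1+d_2}$ together with a map $F:\{\lambda_1,\dots,\lambda_N\}\to\cB(\bC^d,\bC^{d_1+d_2})$ with
$$
I-W_i^*W_j=F(\lambda_i)^*\bigl(I-\varphi(\tau,\lambda_i)^*\varphi(\tau,\lambda_j)\bigr)F(\lambda_j)\qquad(1\le i,j\le N),
$$
where $\varphi(\tau,\lambda)$ abbreviates $\varphi(\tau,s,p)$ for $\lambda=(s,p)$.

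Next I would run a lurking-isometry argument at the finitely many nodes, exactly as in the proof of $(\operatorname{AD})\Rightarrow(\operatorname{RF})$ of Theorem \ref{T:SymRatInn}. The displayed identity says that the two families $\sbm{I\\ \varphi(\tau,\lambda_j)F(\lambda_j)}$ and $\sbm{W_j\\ F(\lambda_j)}$ of operators $\bC^d\to\bC^d\oplus\bC^{d_1+d_2}$ have equal Gram matrices, so $\sbm{I\\ \varphi(\tau,\lambda_j)F(\lambda_j)}\mapsto\sbm{W_j\\ F(\lambda_j)}$ extends to a well-defined isometry between two subspaces of the finite-dimensional space $\bC^d\oplus\bC^{d_1+d_2}$; since the two orthocomplements have the same dimension, this isometry extends to a unitary $U=\sbm{A&B\\ C&D}$ on $\bC^d\oplus\bC^{d_1+d_2}$ with $U\sbm{I\\ \varphi(\tau,\lambda_j)F(\lambda_j)}=\sbm{W_j\\ F(\lambda_j)}$ for every $j$. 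I then set
$$
\Psi_0(s,p):=A+B\varphi(\tau,s,p)\bigl(I-D\varphi(\tau,s,p)\bigr)^{-1}C\qquad (s,p)\in\bG,
$$
which is analytic and $M_d(\bC)$-valued on $\bG$ because $\|\varphi(\tau,s,p)\|<1$ there. Reading off the two rows of $U\sbm{I\\ \varphi(\tau,\lambda_j)F(\lambda_j)}=\sbm{W_j\\ F(\lambda_j)}$ and eliminating $F(\lambda_j)$ gives $\Psi_0(\lambda_j)=W_j$ for all $j$, so $\Psi_0$ solves the Pick problem; and since $\Psi_0$ is exhibited in the form $(\operatorname{RF})$, the implication $(\operatorname{RF})\Rightarrow(\operatorname{RI})$ of Theorem \ref{T:SymRatInn} shows that $\Psi_0$ is a rational inner function on $\bG$, which is exactly the assertion.

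The step I expect to require the most care is the lurking isometry: one must verify that the column map is genuinely well defined and isometric — this is precisely where the node-identity supplied by Lemma \ref{L:DoubSym} is used — and that it extends to a unitary on the \emph{same} space $\bC^d\oplus\bC^{d_1+d_2}$, with no need to enlarge $d_1+d_2$. A secondary nuisance is the bookkeeping for degenerate fibres, namely royal nodes $\lambda_j$ where $\mu_j=\mu_j^\sigma$ so that $\mathbb F$ has fewer than $2N$ elements, and for possible rank degeneracy of the Agler matrices; neither affects the argument but each deserves a remark. Finally, if one also wants the realization to be minimal, the optimal choice $(d_1,d_2)=\deg\tilde g$ with $\det(\Psi_0\circ\pi)=\tilde g/g$ is inherited verbatim from the ``moreover'' clause of Theorem \ref{T:SymRatInn}.
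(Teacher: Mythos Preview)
Your proof is correct and follows the same route as the paper: pull the problem back to $\bD^2$ along $\pi$, obtain a finite Agler decomposition on the symmetric node set $\mathbb F$, feed it into Lemma~\ref{L:DoubSym}, run the lurking-isometry argument to build a finite-dimensional realization, and conclude via $(\operatorname{RF})\Rightarrow(\operatorname{RI})$ of Theorem~\ref{T:SymRatInn}. The only cosmetic difference is how the finite Agler decomposition on $\mathbb F$ is obtained---the paper first produces a rational inner interpolant on $\bD^2$ via \cite[Corollary~2.13]{AM-Bidisc} and then decomposes it with Theorem~\ref{T:BDisk} before restricting to $\mathbb F$, whereas you invoke the Agler solvability criterion at the nodes directly; since the paper immediately restricts to $\mathbb F$ anyway, the two paths merge at exactly the same point.
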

\begin{proof}
Consider an $N$-point solvable matrix Nevanlinna--Pick problem with initial data $(s_1,p_1),(s_2,p_2),\dots,(s_N,p_N)$ in $\mathbb G$ and final data $M_1,M_2,\dots,M_N$ in the closed operator-norm unit ball of $d\times d$ complex matrices. For each $j=1,2,\dots,N$, let $z_j\in\mathbb D^2$ be such that $\pi(z_j)=(s_j,p_j)$. Consider the following (at most $2N$-point) matrix Navanlinna--Pick interpolation problem in $\mathbb D^2$:
 $$
 z_j \mapsto M_j \quad \text{and}\quad z_j^\sigma\mapsto M_j \quad\text{for each }j=1,2,\dots,N.
 $$By hypothesis, this interpolation problem is solvable and thus by \cite[Corollary 2.13]{AM-Bidisc}, there exists a rational inner function $\Psi:\mathbb{D}^2\rightarrow\mathbb{C}^{d\times d}$ such that
\begin{align*}
\Psi(z_j)=M_j=\Psi(z_j^\sigma) \text{ for each }j=1,2,\dots, N.
\end{align*}
Apply Theorem \ref{T:BDisk} to the rational inner function $\Psi$ to get positive integers $d_1,d_2$ and functions $F_{j}:\mathbb{D}^2\rightarrow \cB(\mathbb{C}^{d}, \mathbb{C}^{d_j})$ for $j=1,2$ such that
$$
I-\Psi(w)^*\Psi(z)=(1-\overline w_1z_1)F_1(w)^*F_1(z)+(1-\overline w_2z_2)F_2(w)^*F_2(z).
$$Consider the finite subset $\mathbb F=\{z_j,z_j^\sigma:j=1,2,\dots,N\}$ of $\mathbb D^2$. Define $G:\mathbb F\times\mathbb F\to\mathbb C^{d\times d}$ by
$$
G(z,w)=I-\Psi(w)^*\Psi(z).
$$Since $G$ is doubly symmetric, invoke Lemma \ref{L:DoubSym} to get a unitary $\tau$ on $\mathbb{C}^{d_1+d_2}$ and a function $F:\pi(\mathbb F)\rightarrow\cB(\mathbb{C}^{d},\mathbb{C}^{d_1+d_2})$ such that
$$I-M_i^*M_j=F(s_i,p_i)^*(I-\varphi(\tau,s_i,p_i)^*\varphi(\tau,s_j,p_j))F(s_j,p_j)$$
for each $j=1,2,\dots,N$. Rearrange the above equation to obtain
\begin{align}\label{Condition-Solv}
I+F(s_i,p_i)^*\varphi(\tau, s_i,p_i)^* \varphi(\tau, s_j,p_j) F(s_j,p_j)&=M_i^*M_j+F(s_i,p_i)^*F(s_j,p_j).
\end{align}This readily implies that the operator defined as
\begin{align}\label{Isometry}
\begin{bmatrix}
I_{\mathbb C^d}\\\varphi(\tau, s_j,p_j)F(s_j,p_j)
\end{bmatrix}\xi\mapsto\begin{bmatrix}
M_j\\
F(s_j,p_j)
\end{bmatrix}\xi
\end{align} from
$$\operatorname{span}\left\{
\begin{bmatrix}
I\\\varphi(\tau, s_j,p_j)F(s_j,p_j)
\end{bmatrix}\xi:\xi\in\mathbb C^d,j=1,2,\dots,N\right\}\subset \begin{bmatrix}\mathbb{C}^d\\ \mathbb C^{d_1+d_2}\end{bmatrix}
$$ onto
$$
\operatorname{span}\left\{
\begin{bmatrix}
M_j\\
F(s_j,p_j)
\end{bmatrix}\xi:\xi\in\mathbb C^d,j=1,2,\dots,N\right\}\subset \begin{bmatrix}\mathbb{C}^d\\ \mathbb C^{d_1+d_2}\end{bmatrix}
$$is unitary. By extending this unitary to $ \begin{bmatrix}\mathbb{C}^d\\ \mathbb C^{d_1+d_2}\end{bmatrix}$ and denoting by
 $$
 \begin{bmatrix} A&B\\C&D \end{bmatrix}:\begin{bmatrix}\mathbb C^d\\ \mathbb C^{d_1+d_2}\end{bmatrix}\to \begin{bmatrix}\mathbb C^d\\ \mathbb C^{d_1+d_2}\end{bmatrix}
 $$we define $\Phi:\mathbb{G}\rightarrow\mathbb C^{d\times d}$ by
$$\Phi(s,p)=A+B\varphi(\tau,s,p)(I-D\varphi(\tau,s,p))^{-1}C.$$
By (RF)$\Rightarrow$(RI) of Theorem \ref{T:SymRatInn}, $\Phi$ so defined is rational and inner. Moreover, $\Phi$ interpolates the data because of \eqref{Isometry}.
\end{proof}

\section{Admissible extension of admissible kernels}\label{S:UV}
We begin this section by defining the terminology alluded to in the title above. Here, by a (scaled-valued) {\em kernel} on a set $\Lambda$, we mean a function $k:\Lambda\times \Lambda\to\bC$ such that for every choice of finitely many points $\{\lambda_j\}_{j=1}^N$ in $\Lambda$, the matrix $\left[k(\lambda_i,\lambda_j)\right]_{i,j=1}^N$ is positive semi-definite and the diagonal entries are non-zero. It is well known that associated to every kernel $k$, there is a Hilbert function space $H(k)$, called the reproducing kernel Hilbert space, where the set $\{k(\cdot,\lambda):\lambda\in \Lambda\}$ constitutes a total set for $H(k)$; we refer the readers to \cite[Chapter 2]{AMc-book} for elementary theory of such kernels.
\begin{definition}
Let $\mathcal G$ be a subset of $\mathbb G$. An {\em admissible kernel} $k$ on $\cG$ is a kernel such that the pair $(M_s,M_p)$ of multiplication by coordinate functions on $H(k)$ has $\Gamma$ (the closure of $\bG$) as a spectral set, i.e.,
$$
\|f(M_s,M_p)\|\leq \sup\{|f(s,p)|:(s,p)\in\Gamma\}
$$for every polynomial $f$ in two variables.
\end{definition}
 An example of an admissible kernel on $\bG$ is given by
 $$
((s,p),(t,q))\mapsto \frac{1}{(1-p\overline q)^2-(s-\overline t p)(\overline t- s\overline q)}.
 $$This kernel is known to have all the properties to be referred to as the Szeg\"o kernel for $\bG$ - see the papers \cite{MSRZ, BS, BDSIMRN}. Let us recall that a data set $\{(\lambda_j,w_j):1\leq j\leq N\}$ in $\bD$ is solvable if and only if the Pick matrix
$$
\begin{bmatrix}\frac{1-w_i\overline w_j}{1-\lambda_i\overline\lambda_j}\end{bmatrix}_{i,j=1}^N
$$is positive semi-definite. One way of interpreting this result is that the solvability of a Pick problem in $\bD$ is equivalent to checking the positivity of $1-w_i\overline w_j$ against the Szeg\"o kernel $\bS$ of $\bD$, i.e., $\bS(\lambda_i,\lambda_j):=(1-\lambda_i\overline\lambda_j)^{-1}$. This point of view makes it possible to approach the Pick problem in other cases such as the multi-connected domains \cite{Abrahamse}, the polydisk \cite{Agler1988, AMc-book}, the distinguished varieties \cite{JKM'12} and the symmetrized bidisk. However, one must consider a family of kernels instead of just the Szeg\"o kernel.
\begin{thm}[See \cite{BS}]\label{T:SolveG}
A Pick data set $\{(\lambda_j,w_j):1\leq j\leq N\}$ in $\bG$ is solvable if and only if for every admissible kernel $k$ on $\bG$, the matrix
       \begin{align}\label{PickMatrix}
        \begin{bmatrix}(1-w_i\overline{w_j})k(\lambda_i, \lambda_j)\end{bmatrix}_{i,j=1}^N
       \end{align}is positive semi-definite.
\end{thm}
Let $\Lambda$ be a finite subset of $\bG$ and $k$ on $\Lambda$ be an admissible kernel. As it turns out, there is always a distinguished variety containing $\Lambda$ -- see Remark \ref{R:Substance} below. The question that we address below is whether there is an admissible kernel on the distinguished variety that extends $k$. We also need the extended kernel to possess a type of continuous structure. The concrete dilation theory developed in \cite{BPSR} and the detailed geometry of distinguished varieties developed in \S \ref{S:DistVar} will come in handy now. The result below does the heavy lifting in the proof of Theorem \ref{T:SandIntro}. The proof of part (iii) is due to John E.\ McCarthy from a private communication and is different from the proof of its bidisk analogue (Lemma 4.16 in \cite{AM_Erratum}). We call it a theorem for its significant role in the proof of Theorem \ref{T:SandIntro}. 
\begin{thm}\label{T:AdmisExt}
Let $\Lambda=\{(s_j,p_j):j=1,2,\dots,N\}$ be a subset of $\mathbb G$ and $k$ be an admissible kernel on $\Lambda$. Then there is a distinguished variety $\cW$ containing $\Lambda$ and an admissible kernel $K$ on $\cW\cap\mathbb{G}$ that extends $k$. Moreover, the distinguished variety and the extension can be obtained with the following bonuses:
\begin{enumerate}
\item[(i)] there is a completely non-unitary numerical contraction $F$ such that $\cW=\cW_F$;
\item[(ii)] the extension can be made so that the extended kernel $K$ is given by
$$
K((s,p),(t,q))=\frac{1}{1-p\overline q}\langle u(t,q),u(s,p)\rangle=\langle \bS\otimes u(t,q), \bS\otimes u(s,p)\rangle,
$$where $\bS$ is the Szeg\"o kernel for $\bD$ and $u(s,p)\in\operatorname{Ker} (F+\overline p F^*-\overline sI) $; and
\item[(iii)]the kernel vectors $u(s,p)$ can be chosen so that for each $(s_j,p_j)$, there exist $q_j$ functions $\alpha_1,\alpha_2,\dots,\alpha_{q_j}$ acting on a neighbourhood $V_j$ of $\overline p_j$ so that for each $z\in V_j$ and $l\in\{1,2,\dots,q_j\}$, $\alpha_l(z)$ belongs to $\sigma(F+\overline p_j F^*)$,
    $$
     \alpha_l(z) \to \overline s_j
  \quad \text{and}\quad
    \sum_{l=1}^{q_j}u(\overline{\alpha_l(z)},\overline z)\to u(s_j,p_j) \quad\text{as}\quad z\to\overline{p_j}.
    $$

\end{enumerate}
\end{thm}
\begin{proof}
Let $k$ be an admissible kernel on the set $\Lambda$, i.e., the pair
 $(M_s,M_p)$ of multiplication by coordinate functions on $H(k)=\operatorname{span}\{k_{(s_j,p_j)}: j=1,2,...,N\}$ has $\Gamma$ as a spectral set (i.e., a $\Gamma$-contraction). Let $F'$ on $\cF'=\operatorname{Range}(I-M_pM_p^*)^{1/2}$ be the numerical contraction given by \cite[Theorem 4.2]{BPSR} applied to $(M_s^*,M_p^*)$, i.e., $F'$ is the unique numerical contraction on $\cF'$ (the so-called fundamental operator) such that
 \begin{align}\label{FundEqn}
 M_s^*-M_sM_p^*=D_{M_p^*}F'D_{M_p^*},
 \end{align}where $D_{M_p^*}=(I-M_pM_p^*)^{1/2}$. Since $(s_j,p_j)\in\bG$, we have $|p_j|<1$ for each $j$ and therefore $M_p^{*n}\to 0$ strongly as $n\to\infty$. Using this convergence, one can compute that the operator $\Pi:H(k)\rightarrow H^2(\cF')$ given by
 \begin{align}\label{Dil_1}
 \Pi h= \sum_{n\geq{0}}z^n D_{M_p^*} M_p^{*n} h \quad\text{for all}\quad h\in H(k)
\end{align}is an isometry. The following intertwining relations were established in the proof of \cite[Theorem 4.6]{BPSR}:
\begin{align}\label{Dil_2}
\Pi(M_s^*, M_p^*)= (M_{F'^*+zF'}^*, M_z^*)\Pi.
\end{align}Using the first intertwining relation, we compute
\begin{align*}
\overline{s_j}\Pi k_{(s_j,p_j)}=\Pi M_s^*k_{(s_j,p_j)}&=M_{F'^*+zF'}^*\Pi k_{(s_j,p_j)}=M_{F'^*+zF'}^*\sum_{n\geq0}z^n\overline{p_j}^nD_{M_p^*}k_{(s_j,p_j)}\\
&=\sum_{n\geq{0}}z^n\overline{p_j}^n F'D_{M_p^*} k_{(s_j,p_j)}+\sum_{n\geq{0}}z^n\overline{p_j}^{n+1} F'^*D_{M_p^*}k_{(s_j,p_j)}\\
&=\sum_{n\geq{0}}z^n\overline{p_j}^n (F'+\overline{p_j}F'^*)D_{M_p^*}k_{(s_j,p_j)}.
\end{align*}Equating the coefficients we thus obtain
\begin{align*}
(F'+\overline{p_j}F'^*-\overline{s_j}I)D_{M_p^*}k_{(s_j,p_j)}=0 \quad \text{for each}\quad j=1,2,\dots,N.
\end{align*}Let $F'=\sbm{F&0\\0&F_u}:\sbm{\cF\\\cF_u}\to\sbm{\cF\\\cF_u}$ be the decomposition of $F'$ into the c.n.u.\! part $F$ and the unitary part $F_u$. We pause here to note that none of the vectors $D_{M_p^*}k_{(s_j,p_j)}$ has a non-zero component in $\cF_u$. Indeed, if there was a non-zero vector $v$ such that
$(F_u+\overline{p_j}F_u^*-\overline{s_j}I)v=0$ for some $j$, then we would have $s_j=\beta+\overline{\beta}p_j$ for some $\beta\in\bT$. This contradicts part (iii) of Theorem \ref{T:Charc_G} because $(s_j,p_j)$ comes from $\bG$. Therefore we must have
\begin{align}\label{Points_condi}
(F+\overline{p_j}F^*-\overline{s_j}I)D_{M_p^*}k_{(s_j,p_j)}=0 \quad \text{for each}\quad j=1,2,\dots,N,
\end{align}which readily implies that the points $(s_j,p_j)$ belong to
\begin{align}\label{DV}
\cW_{F}=\{(s, p)\in\mathbb C^2: \operatorname{det}(F^*+pF-sI)=0\},
\end{align} which is a distinguished variety by Theorem \ref{T:DistMain}.

To produce an admissible extension of $k$ to $\cW_F\cap\mathbb G$, we pick an $(s,p)$ in $(\cW_F\cap\mathbb G)\setminus\Lambda$ and choose $u(s,p)$ to be {\em any} vector in $\operatorname{Ker}(F+\overline pF^*-\overline s I)$ and set $u(s_j,p_j)=D_{M_p^*}k_{(s_j,p_j)}$ for each $j=1,2,\dots,N$. We shall prove that the function $K:(\cW_F\cap\mathbb G)\times (\cW_F\cap\mathbb G)\to\bC$ given by
$$
K((s,p),(t,q))=\langle \bS_{ q}\otimes u(t,q), \bS_{p}\otimes u(s,p)\rangle_{H^2\otimes \cF}
$$is an admissible kernel that extends $k$. That it extends $k$ follows from the following easy computation:
 \begin{align*}
K((s_i,p_i),(s_j,p_j))&=\frac{1}{(1-p_i\overline{p_j})}\langle D_{M_p^*}k_{(s_j,p_j)}, D_{M_p^*}k_{(s_i,p_i)}\rangle\\
 & =\frac{1}{(1-p_i\overline{p_j})}\langle(I-M_pM_p^*)k_{(s_j,p_j)},k_{(s_i,p_i)}\rangle=k((s_i,p_i),(s_j,p_j)).
 \end{align*} To prove the admissibility of $K$, we show that the pair of multiplication by coordinate functions $(M_s,M_p)$ on $H(K)=\overline{\operatorname{span}}\{K_{(s,p)}:(s,p)\in\cW_F\cap\mathbb G\}$ is a $\Gamma$-contraction. To that end, we first observe that $H(K)$ is unitarily equivalent to
 $$
\cR=\overline{\operatorname{span}}\{\bS_p\otimes u(s,p):(s,p)\in\cW_F\cap \mathbb G\}\subseteq H^2\otimes\cF
 $$via the unitary given by
 $$
 \tau:\sum_{l=1}^dc_lK_{(s_l,p_l)}\mapsto \sum_{l=1}^dc_l\bS_{p_l}\otimes u(s_l,p_l) \quad \text{where} \quad (s_l,p_l)\in\cW_F\cap \mathbb G .
 $$ We show that $(M_s^*,M_p^*)$ on $H(K)$ is unitarily equivalent to $(M_{F^*+zF}^*,M_z^*)|_\cR$ via the unitary $\tau$. This is easily achieved by doing the following computation on the kernel functions:
\begin{align*}
M_{F^*+zF}^* \tau K_{(s,p)}=M_{F^*+zF}^*(\bS_{ p}\otimes u(s,p))&= \bS_{ p}\otimes Fu(s,p)+\overline{p}\bS_p\otimes F^*u(s,p)\\
&=\bS_{ p}\otimes(F+\overline{p}F^*)u(s,p)\\
&=\overline{s}\bS_{ p}\otimes u(s,p)=\tau M_s^*K_{(s,p)}.
\end{align*}
Note that pairs of the form $(M_{F^*+zF}^*,M_z^*)$ are known to be $\Gamma$-contractions whenever $F$ is a numerical contraction; see \cite[Theorem 2.4]{ay-jot}. Since restrictions and the adjoint of a $\Gamma$-contraction are again  $\Gamma$-contractions, the admissibility of $K$ is established. This completes the proof of items (i) and (ii).

Proof of item (iii) requires some ideas in perturbation theory for the eigenvalue problem: a study of how the eigenvalues and the eigenvectors of a matrix-valued analytic function change subject to a small perturbation. The results that we shall use can be found in the books \cite{Knopp, Kato}. Although we shall apply the results to a very simple matrix-valued function, viz., $z\mapsto F+zF^*$, we briefly discuss the general theory below.

Let $\Psi$ be an $N\times N$ matrix-valued polynomial (the theory extends to the analytic case as well but we restrict ourselves to just polynomials). It follows from the discussion in \cite[Sec.\ 13, Ch.\ 5]{Knopp} that the function
$$
z\mapsto \# \sigma(\Psi(z))
$$ is constant, say $r\leq N$, except at only a finite number of points - the so-called {\em exceptional points}, which may of course be empty. The eigenvalues of $\Psi(z)$ - the so called {\em algebraic functions} - are the roots of the algebraic equation
$$
\det(\Psi(z)-wI)=(-1)^Nw^N+g_1(z)w^{N-1}+\cdots+g_{n-1}(z)w+g_N(z)=0.
$$The algebraic functions are (branches of) analytic functions with exceptional points as the possible branch points. To demonstrate, consider the two examples
$$\begin{bmatrix}
1&z\\z&-1	
\end{bmatrix}
\quad\text{and}\quad
\begin{bmatrix}
	0&z\\z&0
\end{bmatrix}.$$
For the first example, the eigenvalues are $\pm (1+z^2)^{1/2}$, with the exceptional points $\pm i$ as the branch points; for the second example, the
eigenvalues are the entire functions $\pm z$, and $0$ is the only exceptional point.

For a non-exceptional point $z$, let $\alpha_1(z),\alpha_2(z),\dots,\alpha_r(z)$ be the distinct eigenvalues of $\Psi(z)$. Let $P_j(z)$ be the projection (i.e., idempotent but not necessarily self-adjoint) onto the generalized eigenspace corresponding to the eigenvalue $\alpha_j(z)$. The projections $P_j(z)$ are called the {\em eigenprojections}. It is known (see \cite[Sec. I-3,4 and Sec. II-4]{Kato}) that
$$
P_i(z)P_j(z)=P_j(z)P_i(z)=\delta_{ij}P_i(z) \quad\text{and}\quad\sum_{j=1}^rP_j(z)=I_N.
$$Therefore, with the {\em eigennilpotent} defined as
$$
D_j(z)=(\Psi(z)-\alpha_j(z)I)P_j(z)
$$it follows that
$$
\Psi(z)=\sum_{j=1}^r\alpha_j(z)P_j(z)+\sum_{j=1}^rD_j(z).
$$The above decomposition is referred to as the canonical decomposition of $\Psi$; see the discussion in \cite[Sec. I-4 and II-4]{Kato}.

Let $z_0$ be any complex number (possibly an exceptional point) and $w_0$ be an eigenvalue of $\Psi(z_0)$. Let $l$ be the algebraic multiplicity of $w_0$. Let $\epsilon>0$ be small enough so that $\bD(w_0,\epsilon)$, the disk of radius $\epsilon$ around $w_0$, does not contain any other eigenvalue of $\Psi(z_0)$. Then by the ``Theorem on the continuity of the roots" in \cite[Page 122]{Knopp}, it is possible to find a $\delta=\delta(\epsilon)>0$ so that for every $z$ in $\mathbb D(z_0,\delta)$, the disk of radius $\delta$ around $z_0$, there are {\em exactly} $l$ eigenvalues of $\Psi(z)$, say $\alpha_1(z),\alpha_2(z),\dots,\alpha_l(z)$ lying inside $\mathbb D(w_0,\epsilon)$. In other words, every $l$-fold eigenvalue of $\Psi(z_0)$ branches off into exactly $l$ simple roots $\alpha_1(z),\alpha_2(z),\dots,\alpha_l(z)$ such that for each $j$, $\alpha_j(z)\to w_0$ as $z\to z_0$ (compare it with the examples above).  Define the operator
\begin{align}\label{TheP}
P(z)=\frac{1}{2\pi i}\int_{\mathbb D(w_0,\epsilon)}(\zeta-\Psi(z))^{-1}d\zeta.
\end{align}Then the crux of the matter is that $z\mapsto P(z)$ is projection-valued and holomorphic near $z_0$. Furthermore, it is equal to the sum of the eigenprojections corresponding to all the eigenvalues that are inside $\mathbb D(w_0,\epsilon)$, i.e.,
\begin{align}\label{PropertyP}
P(z)=\sum_{j=1}^lP_j(z).
\end{align}It should, however, be noted that while $P$ is holomorphic near $z_0$, the eigenprojections $P_j$ may not even be defined at the exceptional points; see for example \cite[Example II-1.12]{Kato}. For the results stated above, see \cite[Prob. I-5.9 and Sec. II-4]{Kato}.

Let us now consider the example $\Psi(z)=F+zF^*$ for some matrix $F$. It can be checked that $\Psi(z)$ is a normal matrix for each $z$ on the unit circle. Therefore by Theorem II-1.10 in Kato \cite{Kato}, $D_j(z)=0$ for every $z$ except possibly the exceptional ones. In other words, $\Psi(z)$ is diagonalizable at every $z$ except possibly the exceptional points, i.e., for each $j$, $\operatorname{Range}P_j(z)$ equals to the eigenspace corresponding to the $j$-th eigenvalue.

Let us recall that the vectors $u(s,p)$ in item (ii) were just any vectors in the kernel of $F+\overline p F^*-\overline sI$. To prove item (iii), we have to make certain choices of such vectors. To that end, let us pick the point $(s_1,p_1)$; the analysis for the other nodes remains the same. Note that $\overline s_1$ is an eigenvalue of $\Psi(\overline p_1)$ with $u(s_1,p_1)=D_{M_p^*}k_{(s_1,p_1)}$ as an eigenvector. We apply the above analysis for the choice $(z_0,w_0)=(\overline p_1,\overline s_1)$ to obtain, for a sufficiently small $\epsilon>0$, a $\delta>0$ such that for every $z$ in $\mathbb D(\overline p_1,\delta)\cap\mathbb D$, there exist points $\alpha_1(z),\alpha_2(z),\dots,\alpha_l(z)$ in $\mathbb D(\overline s_1,\epsilon)\cap\sigma(\Psi(z))$, i.e., for each $j=1,2,\dots,l$,
\begin{align}\label{convergence}
\alpha_j(z)\to \overline s_1 \quad\text{as}\quad z\to \overline p_1.
\end{align} With the projection $P$ as defined in \eqref{TheP}, we then have $P(\overline p_1)u(s_1,p_1)=u(s_1,p_1)$. For each $z$ in $\mathbb D(\overline p_1,\delta)\cap\mathbb D$, set
$$
v(z)=P(z)u(s_1,p_1)=\sum_{j=1}^l P_j(z)u(s_1,p_1)=\sum_{j=1}^l v_j(z),
$$ where $v_j(z):=P_j(z)u(s_1,p_1)$. Now if necessary we can choose smaller $\delta$ so that $\mathbb D(\overline p_1,\delta)\cap\mathbb D$ does not contain any exceptional point of $\Psi$ (except possibly $\overline p_1$ itself).  Therefore we can assume that $\Psi(z)$ is diagonalizable for each $z\in \mathbb D(\overline p_1,\delta)\cap\mathbb D$. Therefore the vectors $v_j(z)$ are just the eigenvectors corresponding to $\alpha_j(z)$, i.e.,
\begin{align}\label{VarietyMember}
(\Psi(z)-\alpha_j(z)I)v_j(z)=(F+zF^*-\alpha_j(z)I)v_j(z)=0
\end{align}for every $j=1,2,\dots,l$ and each $z\in \mathbb D(\overline p_1,\delta)\cap\mathbb D$. This together with \eqref{convergence} imply that the points $(\overline{\alpha_j(z)},\overline z)$ are in $\cW_F\cap \mathbb G$ and converge to $(s_1,p_1)$ as $z\to\overline p_1$. The proof of item (iii) now follows if we set $u (\overline{\alpha_j(z)},\overline z)=v_j(z)$.
\end{proof}
\begin{remark}\label{R:Substance}
  The substance of Theorem \ref{T:AdmisExt} above is not that there is a distinguished variety containing the finite set $\Lambda$; it is that there is a distinguished variety $\cW$ to which a given admissible kernel on $\Lambda$ can be admissibly extended to $\cW\cap \mathbb G$. Indeed, given a finite subset $\{(s_j,p_j):1\leq j\leq N\}$ of $\bG$, there is a quick way to get a distinguished variety containing these points. For example, consider the diagonal matrix $F=\operatorname{diag}(\beta_j)$. By part (iii) of Theorem \ref{T:Charc_G}, this is a strict contraction and hence a strict numerical contraction. One can show using the relation between $(s_j,p_j)$ and $\beta_j$ given in part (iii) of Theorem \ref{T:Charc_G} that the distinguished variety $\cW_F$ contains all the points $(s_j,p_j)$.
\end{remark}

\section{Distinguished varieties and extremal Pick problems -- proofs of Theorem \ref{T:SandIntro}} \label{S:TwoProofs}
In this section, we give two proofs of Theorem \ref{T:SandIntro}, which we restate below as Theorem \ref{T:Sand} for readers' convenience. We first present the shorter proof that directly appeals to its bidisk version. 
We begin with a couple of elementary observations about the uniqueness set of a solvable problem.
\begin{obs}\label{O:UV}
The uniqueness set corresponding to a solvable data in $\bG$ coincides with an algebraic variety.
\end{obs}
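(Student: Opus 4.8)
The plan is to build, from a solvable Pick problem $\fD=\{(\lambda_j,w_j):1\le j\le N\}$ in $\bG$, an explicit finite collection of polynomials whose common zero set in $\bG$ is precisely the set on which all interpolants agree. The starting point is Theorem~\ref{T:RS} (equivalently Theorem~\ref{T:RatSol}): there is at least one rational inner solution $\Phi_0$. Fix such a $\Phi_0$. If $\Phi$ is any other solution, then $\Phi-\Phi_0$ is a bounded analytic function on $\bG$ vanishing at the initial nodes, and the uniqueness set is exactly $\bigcap_{\Phi}\,Z(\Phi-\Phi_0)$, the intersection taken over all solutions $\Phi$. So the content of the observation is that this (a priori huge) intersection is cut out by finitely many polynomials in $\bG$.

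First I would reduce to the non-extremal and extremal cases separately. In the non-extremal case one invokes Observation~\ref{O:NonExt}: the uniqueness set is just $\{\lambda_1,\dots,\lambda_N\}$, which is certainly an algebraic variety (a finite set is the zero set of finitely many polynomials), so there is nothing more to prove. In the extremal case, the key structural input is the realization/parametrization of all solutions: via the symmetrization correspondence one pulls the problem back to a (solvable) matrix Pick problem on $\bD^2$, applies the known parametrization of all solutions there (as in the Ball--Trent/Agler--McCarthy circle of ideas, cf.\ \cite{AM-Bidisc}), and pushes forward. This exhibits every solution $\Phi$ in the form obtained from a contractive free parameter feeding into a fixed transfer function built from the data. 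Consequently $\Phi(s,p)-\Phi_0(s,p)$ factors, at each $(s,p)\in\bG$, through a fixed finite-dimensional ``defect'' operator $D(s,p)$ whose entries are rational in $(s,p)$; a point $(s,p)$ lies in the uniqueness set precisely when this defect degenerates, i.e.\ when a certain finite set of rational functions (minors, or the entries of $D(s,p)$ composed with the coordinate projections from the lurking-isometry construction) vanish. Clearing denominators gives finitely many polynomials, and their common zero set in $\bG$ is the uniqueness set.

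Concretely, the cleanest route uses the kernel point of view that Observation~\ref{O:UV} is set up to exploit: the solvable data determines a (finite-dimensional) space of admissible kernels, and for each such kernel $k$ the pair $(M_s,M_p)$ on $H(k)$ is a $\Gamma$-contraction interpolating the data; the uniqueness set is the set of $(s,p)$ where the evaluation functionals associated to all admissible kernels fail to separate, which by the reproducing-kernel identities is the vanishing locus of finitely many rational (hence, after clearing denominators, polynomial) functions. Either way, the canonical variety is $\bigl\{(s,p)\in\bG: q_1(s,p)=\cdots=q_m(s,p)=0\bigr\}$ for the polynomials $q_i$ produced by this construction, and one checks the two inclusions: any point where all $q_i$ vanish forces $\Phi(s,p)=\Phi_0(s,p)$ for every solution $\Phi$ (so it is in the uniqueness set), and conversely any point outside $Z(q_1,\dots,q_m)$ admits two solutions that disagree there, which is exactly the statement that the free parameter in the parametrization can be varied to change the value at a non-degenerate point.

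The main obstacle is the converse inclusion: showing that at a point $(s,p)$ where the defect $D(s,p)$ does \emph{not} degenerate one can genuinely produce two distinct interpolants taking different values at $(s,p)$. This requires knowing that the free parameter in the $\bG$-parametrization of solutions is a true contractive parameter with enough freedom---i.e.\ that the parametrization is onto the solution set, not merely that each parameter yields a solution---and transporting this faithfully through the symmetrization map $\pi$ (the subtlety flagged in the introduction, that $\pi$ does not preserve extremality in the reverse direction, has to be handled with care here). Granting the realization theory of Section~\ref{S:RationalFunctions} and the $\bD^2$ parametrization, this is a finite-dimensional linear-algebra perturbation argument; the forward inclusion (a common zero of the $q_i$ lies in the uniqueness set) is comparatively routine once the factorization of $\Phi-\Phi_0$ through $D(s,p)$ is in hand.
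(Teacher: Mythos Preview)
Your plan overshoots the target and, in doing so, introduces a genuine gap. You are trying to produce a \emph{finite} explicit list of polynomials by parametrizing \emph{all} solutions and reading off a ``defect operator'' $D(s,p)$. But the parametrization step is not available in the form you need: pulling back to $\bD^2$ gives a correspondence between solutions on $\bG$ and \emph{symmetric} solutions on $\bD^2$, and the Ball--Trent/Agler--McCarthy free-parameter description does not come with any control on which parameters yield symmetric solutions. So the ``onto'' direction you flag as the main obstacle is not a routine perturbation argument---you would first have to identify the symmetric slice of the parameter space, and nothing in the paper (or in the references you cite) hands you that. The alternative ``kernel point of view'' you sketch is too vague to count as a proof.

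The paper's argument is far simpler and avoids parametrization entirely. Fix a point $(s,p)\in\bG\setminus\cS$. By definition of the uniqueness set there are two interpolants $f,f'$ with $f(s,p)=w\neq w'=f'(s,p)$. Now \emph{augment the data}: both $\fD\cup\{((s,p),w)\}$ and $\fD\cup\{((s,p),w')\}$ are solvable, so Theorem~\ref{T:RS} supplies \emph{rational} interpolants for each. Their difference is a rational function vanishing on $\cS$ but not at $(s,p)$; clearing denominators gives a polynomial, and its zero set is an algebraic variety $\cW_{(s,p)}\supseteq\cS$ with $(s,p)\notin\cW_{(s,p)}$. Intersecting $\cW_{(s,p)}$ over all $(s,p)\in\bG\setminus\cS$ yields an algebraic variety $\cW$ with $\cW\cap\bG=\cS$. (An arbitrary intersection of zero sets of polynomials is again such a zero set; finiteness follows from Noetherianity but is not even needed given the paper's definition of ``algebraic variety''.) The one idea you missed is this data-augmentation trick: you never need to understand the full solution set, only to manufacture, point by point, a pair of rational solutions that separate.
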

\begin{proof}
Let $\fD=\{((s_j,p_j),w_j):1\leq j\leq N\}$ be a solvable data set in $\bG$ and $\cS$ be the uniqueness set. Let $(s,p)$ be any point in $\bG\setminus\cS$. Then by definition of the uniqueness set, there exists a pair of interpolants $f$ and $f'$ so that $w=f(s,p)\neq f'(s,p)=w'$. Since the data $\fD\cup\{((s,p),w)\}$ and $\fD\cup\{((s,p),w')\}$ are solvable, by Theorem \ref{T:RS}, there exist rational solutions to both of these problems. Consequently, $\cS$ is contained in the common zero sets of the differences of these rational interpolants, which is an algebraic variety; denote this algebraic variety by $\cW_{(s,p)}$. Repeat the analysis for all $(s,p)\in\bG\setminus\cS$ and denote the intersection of $\cW_{(s,p)}$ by $\cW$. Obviously, $\cW$ is an algebraic variety that coincides with $\cS$ in $\bG$, i.e., $\cS=\bG\cap\cW.$
\end{proof}
\begin{obs}\label{O:NonExt}
The uniqueness set for a non-extremal data $\fD=\{((s_j,p_j),w_j):1\leq j\leq N\}$ in $\bG$ is just the set $\{(s_j,p_j):1\leq j\leq N\}$.
\end{obs}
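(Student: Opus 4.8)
The plan is to establish the two inclusions separately, the nontrivial one by a small additive perturbation of a strictly contractive solution. One inclusion is automatic: every interpolant $f$ of $\fD$ satisfies $f(s_j,p_j)=w_j$ for all $j$, so all solutions agree on $\{(s_j,p_j):1\le j\le N\}$, and hence this set is contained in the uniqueness set. It therefore remains to show that no point of $\bG$ outside this set lies in the uniqueness set; that is, for every $(s,p)\in\bG$ with $(s,p)\neq(s_j,p_j)$ for all $j$, there exist two solutions of $\fD$ taking different values at $(s,p)$.

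To build the two solutions I would use the hypothesis that $\fD$ is non-extremal (and solvable): fix an interpolant $f_0$ with $\delta:=1-\sup_{\bG}|f_0|>0$, and perturb it by a polynomial that vanishes at the initial nodes but not at $(s,p)$. Since $(s,p)\neq(s_j,p_j)$, for each $j$ one can choose an affine-linear functional $\ell_j$ on $\bC^2$ with $\ell_j(s_j,p_j)=0$ and $\ell_j(s,p)=1$: the functionals vanishing at $(s_j,p_j)$ form a $2$-dimensional affine plane, and evaluation at $(s,p)$ is not identically zero on it precisely because $(s,p)\neq(s_j,p_j)$, so a suitable scaling works. Put $q=\prod_{j=1}^N\ell_j$, so $q(s_j,p_j)=0$ for all $j$ and $q(s,p)=1$, and set $M:=\sup_{\Gamma}|q|<\infty$, which is finite because $\Gamma=\overline{\bG}$ is compact. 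For $0<\epsilon<\delta/M$ the function $f_\epsilon:=f_0+\epsilon q$ is analytic on $\bG$, satisfies $\sup_{\bG}|f_\epsilon|\le\sup_{\bG}|f_0|+\epsilon M<1$ (so it is $\bD$-valued and, since $q$ kills the nodes, is again an interpolant of $\fD$), and $f_\epsilon(s,p)=f_0(s,p)+\epsilon\neq f_0(s,p)$. Thus $f_0$ and $f_\epsilon$ are solutions of $\fD$ disagreeing at $(s,p)$, so $(s,p)$ is not in the uniqueness set; combined with the first inclusion this proves the observation.

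There is no genuine obstacle in this argument; the only two points worth a sentence of justification are the existence of the separating functionals $\ell_j$ (elementary linear algebra, as indicated above) and the bookkeeping that ``non-extremal'' delivers a strictly contractive solution $f_0$, which is precisely the negation of the definition of extremality for a solvable problem. Everything else is a bounded-perturbation estimate together with the fact that a polynomial is bounded on the compact set $\Gamma=\overline{\bG}$.
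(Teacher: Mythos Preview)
Your proof is correct and follows essentially the same approach as the paper: perturb a strictly contractive interpolant by a small multiple of a polynomial that is a product of affine-linear factors vanishing at the nodes but not at the test point. The only cosmetic difference is that the paper uses factors of the form $(s-s_r)+\eta(p-p_r)$ with a single slope $\eta$ chosen to avoid finitely many bad values, whereas you pick each $\ell_j$ separately and normalize it to $1$ at $(s,p)$.
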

\begin{proof} Let $f$ be a solution of supremum norm less than $1$ and $\eta\neq 0$ be any complex number. Choose $\epsilon>0$ small enough so that for every $0<\delta\leq\epsilon$
the function
$$
h_{\delta}(s,p)= f(s,p)+\delta\prod_{r=1}^N[(s-s_r)+\eta(p-p_r)],
$$ which obviously interpolates, has supremum norm no greater than one. Now for $(s,p)$ to be in the uniqueness set we must have $\prod_{r=1}^N[(s-s_r)+\eta(p-p_r)]=0$, which implies that there is an $r=1,2,\dots,N$, so that $(s-s_r)+\eta(p-p_r)=0$. Note that if $p=p_r$, then obviously $s=s_r$. If $p\neq p_r$, then we must have $\eta=-(s-s_r)/(p-p_r)$. So we choose $\eta$ other than these points. Hence the observation is made.
\end{proof}

An $N$-point extremal Pick problem in $\bG$ is said to be {\em minimal}, if none of the $(N-1)$-point subproblems is extremal.
\begin{thm}\label{T:Sand}
Given a minimal extremal Pick problem $\{(s_j,p_j)\to w_j\}_{j=1}^N$ on the symmetrized bidisk with $\cU$ as its uniqueness variety, there is a distinguished variety $\cW$ such that
\begin{align}\label{Sand}
\{(s_1,p_1),(s_2,p_2),\dots,(s_N,p_N)\}\subset\cW\subseteq\cU.
\end{align}
\end{thm}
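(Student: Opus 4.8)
The plan is to run the Pick‑matrix uniqueness mechanism of Agler--McCarthy on the distinguished variety furnished by Lemma~\ref{L:AdmisExt}. Throughout write $\lambda_j=(s_j,p_j)$ and $\fD=\{(\lambda_j,w_j)\}_{j=1}^N$.

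\emph{An active kernel with non‑vanishing null vector.} By Lemma~\ref{L:Active}, extremality yields an active kernel $k$ on $\Lambda=\{\lambda_1,\dots,\lambda_N\}$; fix $0\neq\gamma=(\gamma_1,\dots,\gamma_N)$ in the null space of $[(1-w_i\overline{w_j})k_{ij}]$. First I would use minimality to force $\gamma_j\neq0$ for all $j$ and this null space to be one‑dimensional. If $\gamma_{j_0}=0$, then the restriction of $k$ to $\Lambda\setminus\{\lambda_{j_0}\}$ is again admissible --- the span of the corresponding kernel functions is invariant under $M_s^*,M_p^*$, so $\Gamma$ stays a spectral set for the compression of $(M_s,M_p)$ --- and $(\gamma_j)_{j\neq j_0}$ is a null vector of the associated Pick matrix, whence by Lemma~\ref{L:Active} the $(N-1)$‑point subproblem would be extremal, contradicting minimality. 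A two‑dimensional null space would contain a non‑zero vector with a zero coordinate, reducing to the case just treated.

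\emph{Extension to a distinguished variety and pinning down interpolant values.} Apply Lemma~\ref{L:AdmisExt} to $k$: it provides a completely non‑unitary numerical contraction $F$, the distinguished variety $\cW_F\supseteq\Lambda$, an admissible extension $K$ of $k$ to $\cW_F\cap\bG$ of the explicit shape in item~(ii), and the perturbation data of item~(iii). In $H(K)$ put $\xi=\sum_j\gamma_jK_{\lambda_j}$ and $\eta=\sum_j\gamma_j\overline{w_j}K_{\lambda_j}$; since the formula in Lemma~\ref{L:AdmisExt}(ii) shows $K$ is holomorphic in its first argument, $\xi$ and $\eta$ are holomorphic on $\cW_F\cap\bG$. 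Let $f$ be any interpolant of $\fD$ and $\lambda_0\in\cW_F\cap\bG$, put $w_0=f(\lambda_0)$. The enlarged data $\fD\cup\{(\lambda_0,w_0)\}$ is solved by $f$, hence its Pick matrix against the admissible kernel $K$ over $\{\lambda_0,\dots,\lambda_N\}$ is positive semi‑definite; testing it against $(\gamma_1,\dots,\gamma_N,t)$ and using that $\gamma$ annihilates the $N\times N$ block forces the cross‑term to vanish, i.e.\ $\sum_{j\ge1}(1-w_0\overline{w_j})K(\lambda_0,\lambda_j)\gamma_j=0$, which reads
\[
\xi(\lambda_0)=f(\lambda_0)\,\eta(\lambda_0)\qquad\text{for every }\lambda_0\in\cW_F\cap\bG.
\]

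\emph{From the identity to the sandwich.} For two interpolants $f,g$ the identity gives $(f-g)\eta\equiv0$ on $\cW_F\cap\bG$, so on any irreducible component $\cC$ of $\cW_F$ with $\eta\not\equiv0$ --- where the zeros of $\eta$ are isolated --- all interpolants agree on $\cC\cap\bG$. Let $\cW$ be the union of those components of $\cW_F$ on which $\eta\not\equiv0$; it is a finite union of distinguished varieties (each component of $\cW_F$ being distinguished and meeting $\bG$, as $F$ is c.n.u.), hence a distinguished variety, and $\cW\cap\bG$ lies in the uniqueness set. Since $\cC\cap\bG$ is Zariski dense in each component $\cC$ of $\cW$ while $\cU$ is an algebraic variety with $\cU\cap\bG$ equal to the uniqueness set, the inclusion $\cW\cap\bG\subseteq\cU\cap\bG$ upgrades to $\cW\subseteq\cU$. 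What remains --- and where I expect the real difficulty, and where minimality and item~(iii) are essential --- is $\Lambda\subseteq\cW$, i.e.\ that no node lies on a component where $\eta$ vanishes identically. Suppose $\eta\equiv0$ on a component $\cC$ through a node $\lambda_{j_0}$; by the displayed identity $\xi\equiv0$ on $\cC$ too, so $\sum_j\gamma_j\overline{w_j}k_{j_0 j}=0=\sum_j\gamma_jk_{j_0 j}$. I would then push these vanishing relations off $\lambda_{j_0}$ along the approach paths $(\overline{\alpha_l(z)},\overline z)\to\lambda_{j_0}$ of item~(iii), using that $\sum_l K_{(\overline{\alpha_l(z)},\overline z)}\to K_{\lambda_{j_0}}$ in $H(K)$ and that the partial sums $\sum_l v_l(z)$ are controlled by the holomorphic eigenprojections of $z\mapsto F+zF^*$, so as to see that $\eta$ and $\xi$ vanish on all local branches of $\cW_F$ at $\lambda_{j_0}$. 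Combined with $\gamma_j\neq0$ and the one‑dimensionality of the Pick null space, this should force $(\overline{w_j}\gamma_j)_j$ to be a scalar multiple of $\gamma$, i.e.\ all $w_j$ equal to a common $w$: if $|w|<1$ the problem is not extremal, a contradiction; if $|w|=1$ the constant $w$ is the only interpolant, $\cU=\bG$, and any distinguished variety through $\Lambda$ (for instance the diagonal one from the Remark after Lemma~\ref{L:AdmisExt}) completes the proof. The genuine obstacle is the branch‑tracking and the bookkeeping of eigenvector limits in this final step.
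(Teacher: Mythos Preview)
Your overall strategy coincides with the paper's: obtain an active kernel with null vector $\gamma$ having no zero entries, extend via Lemma~\ref{L:AdmisExt} to an admissible kernel $K$ on a distinguished variety $\cW_F$, derive the identity $\xi(\lambda_0)=f(\lambda_0)\eta(\lambda_0)$ from positivity of the enlarged Pick matrix, and then retain only the components on which $\eta\not\equiv0$. The paper does exactly this. The genuine gap is in your final step, and it has two parts.

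First, you cannot simultaneously have $\eta$ holomorphic on $\cW_F\cap\bG$ and have $\eta(\lambda_{j_0})=\sum_j\gamma_j\overline{w_j}k_{j_0 j}$. The formula in Lemma~\ref{L:AdmisExt}(ii) gives $K((s,p),(t,q))=(1-p\overline q)^{-1}\langle u(t,q),u(s,p)\rangle$ with $u(\mu)$ an \emph{arbitrary} nonzero element of $\operatorname{Ker}(F+\overline{p_\mu}F^*-\overline{s_\mu}I)$ for $\mu\notin\Lambda$; so $\eta$ is only well defined up to a pointwise nonzero scalar on the generic (one-dimensional kernel) locus. One can pick $u$ anti-holomorphically on each local branch away from the exceptional points of $z\mapsto F+zF^*$, making $\eta$ holomorphic \emph{there}; but then the value at a node $\lambda_{j_0}$ is the limit along that single branch, not $\sum_j\gamma_j\overline{w_j}k_{j_0 j}$. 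Item~(iii) says precisely that the \emph{sum over all local branches} of these limits equals $D_{M_p^*}k_{\lambda_{j_0}}$, hence that $\sum_l\eta_l(\lambda_{j_0})=\sum_j\gamma_j\overline{w_j}k_{j_0 j}$; it does not let you recover that equality from a single branch.

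Second, your logic is inverted and your contradiction does not follow. To get $\Lambda\subseteq\cW$ you need: for each node, \emph{some} branch through it has $\eta\not\equiv0$; the negation is that on \emph{every} branch through some $\lambda_{j_0}$ one has $\eta\equiv0$ near $\lambda_{j_0}$. Assuming $\eta\equiv0$ on \emph{one} component and then trying to ``push'' to all branches via item~(iii) is the wrong direction. The paper runs it correctly: assuming the denominator vanishes on every sheet through $\lambda_{j_0}$, sum over the branches and use item~(iii) to obtain $\sum_j\overline{w_j}k_{j_0 j}\gamma_j=0$. From this single relation the contradiction is immediate and does not go through your ``all $w_j$ equal'' claim (which does not follow from what you have, nor from one-dimensionality of the Pick null space): perturb $w_{j_0}$ to $w_{j_0}+\epsilon$ and compute
\[
\big\langle[(1-w_i'\overline{w_j'})k_{ij}]\gamma,\gamma\big\rangle=-|\epsilon|^2k_{j_0 j_0}|\gamma_{j_0}|^2<0,
\]
so the perturbed problem is unsolvable for every $\epsilon\neq0$; hence $w_{j_0}$ is determined by the remaining data, the $(N-1)$-point subproblem dropping $\lambda_{j_0}$ is extremal, and minimality is violated.
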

\subsection{The first proof}
Our point of departure for the first proof is the following lemma. As in Section \ref{S:RationalFunctions}, we continue to denote $z^\sigma=(z_2,z_1)$ for $z=(z_1,z_2)\in\bC^2$.
\begin{lemma}\label{L:Added}
Let $\{(\lambda_j,w_j):1\leq j\leq N\}$ be an extremal Pick problem in $\bG$. Let $\mu_j\in\bD^2$ be such that $\pi(\mu_j)=\lambda_j$ for each $j=1,2,\dots,N$. Then the (at most $2N$-point) Pick problem $\{(\mu_j,w_j),(\mu_j^\sigma,w_j):1\leq j\leq N\}$ in $\bD^2$ is extremal.
\end{lemma}
\begin{proof}
The proof is rather straightforward. Let $f:\bD^2\to\bD$ solve the Pick data in $\bD^2$. We want to show that $\sup_{\bD^2}|f|=1$. Let us denote the symmetric part of $f$ by $f_+$, i.e., $f_{+}(z)=(f(z)+ f(z^\sigma))/2.$ Since $f(\mu_j)=w_j=f(\mu_j^\sigma)$, we must have $f_+(\mu_j)=f_+(\mu_j^\sigma)=w_j$ for each $j=1,2,\dots,N$. Then the function $F:\bG\to\bD$ defined as $F\circ\pi=f_+$ interpolates the data $\{(\lambda_j,w_j):1\leq j\leq N\}$, and thus by hypothesis, $\sup_{\bG}|F|=\sup_{\bD^2}|f_+|=1$. It now follows from the trivial inequality $|f_+|\leq |f|$ that $\sup_{\bD^2}|f|=1$, and hence the pulled back data in $\bD^2$ must be extremal.
\end{proof}

\noindent
{\em First proof of Theorem \ref{T:Sand}:}
Let $\{(\lambda_j,w_j):1\leq j\leq N\}$ be an extremal and minimal Pick problem in $\bG$, and $\cU$ be its uniqueness variety. We have to find a distinguished variety satisfying the conclusion of Theorem \ref{T:Sand}. Consider the pulled back problem $\{(\mu_j,w_j),(\mu_j^\sigma,w_j):1\leq j\leq N\}$ in $\bD^2$. This problem is extremal by Lemma \ref{L:Added}. If it is not minimal, then by definition it has an extremal sub-problem. Note that the initial data of this sub-problem cannot miss out both $\mu_j$ and $\mu_j^\sigma$ for any $j=1,2,\dots,N$. This is because of the trivial fact that if a Pick data $\{(\mu_j,w_j):1\leq j\leq N\}$ in $\bD^2$ is extremal, then so is the problem $\{(\pi(\mu_j),w_j):1\leq j\leq N\}$ in $\bG$. Therefore if the extremal sub-problem misses both $\mu_j$ and $\mu_j^\sigma$ for some $j$, then we shall have an extremal sub-problem of $\{(\lambda_j,w_j):1\leq j\leq N\}$; this contradicts the minimality of the problem. Consequently, if the Pick problem $\{(\mu_j,w_j),(\mu_j^\sigma,w_j):1\leq j\leq N\}$ in $\bD^2$ is not extremal, then we shall end up having a minimal extremal sub-problem $\fD=\{(\delta_j,w_j): j=1,2,\dots, R\}$, where $R\leq 2N$ such that $\{\pi(\delta_j):j=1,2,\dots,R\}=\{\lambda_j:j=1,2,\dots,N\}$. This is good enough for us. Because then by Theorem 4.1 of \cite{AM_Acta} (the bidisk analogue of Theorem \ref{T:Sand}), there exists a distinguished variety $\cV$ with respect to $\bD^2$ such that $\{\delta_j\}_{j=1}^R\subset\cV\subset\cU'$, where $\cU'$ is the uniqueness variety for $\fD$. Note next that if $F$ is a solution for $\{(\lambda_j,w_j):1\leq j\leq N\}$, then $F\circ\pi$ is a solution for $\fD$, and therefore, $\pi(\cU')\subset\cU$. The proof now follows once we take the image of the containments $\{\delta_j\}_{j=1}^R\subset\cV\subset\cU'$ under $\pi$.
\qed

\subsection{The second proof}
From a practical point of view, it is in general inconvenient to approach a given Pick problem in $\bG$ by pulling it back to $\bD^2$. Because as the analysis in the first proof of Theorem \ref{T:Sand} above shows, there is a two-fold difficulty with this pull-back approach: (i) One may have to consider a Pick problem with an increased number of Pick data when pulled back to $\bD^2$. The difficulty of a Pick problem obviously increases with the number of Pick data; and (ii) It may be difficult, in general, to find out which sub-problem of the pulled back problem is minimal and extremal. Therefore it is desirable to have a direct proof staying in the realm of the symmetrized bidisk.

Given a solvable data $\{(\lambda_j,w_j):1\leq j\leq N\}$ in $\bG$, an admissible kernel $k$ is called {\em active}, if the (positive semi-definite) matrix $\begin{bmatrix}
(1-w_i\overline w_j)k(\lambda_i,\lambda_j)
\end{bmatrix}$ has a non-zero vector in its null space. There is an easy way (if one is lucky) to tell if a given solvable data is extremal.
\begin{lemma}\label{L:Active}
A solvable Pick problem on the symmetrized bidisk is extremal if and only if it has an active kernel.
\end{lemma}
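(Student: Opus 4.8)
The plan is to prove both implications by relating the existence of an active kernel to the failure of a strict-contraction solution, using Theorem \ref{T:SolveG} as the bridge between function-theoretic solvability and the positivity of the Pick matrices \eqref{PickMatrix}. For the forward direction, suppose the problem is extremal, i.e.\ solvable but with no interpolant of supremum norm over $\bG$ strictly less than one. If every admissible kernel were non-active, then for every admissible kernel $k$ the matrix $\bigl[(1-w_i\overline{w_j})k(\lambda_i,\lambda_j)\bigr]$ would be strictly positive definite. I would then argue that this slack is uniform enough to perturb the problem to one of the form $\{(\lambda_j,(1+\epsilon)w_j)\}$ (or, more robustly, to shrink the target radius) for some $\epsilon>0$: since scaling the $w_j$ by a factor slightly larger than one only changes the Pick matrices by a controlled amount, strict positivity for the original data forces (semi-)positivity for the scaled data for all admissible $k$ simultaneously, hence by Theorem \ref{T:SolveG} the scaled problem is solvable; a solution $g$ of the scaled problem then yields $g/(1+\epsilon)$, an interpolant of the original data with supremum norm at most $1/(1+\epsilon)<1$, contradicting extremality. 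So some admissible kernel must be active.

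For the converse, suppose there is an active admissible kernel $k$, so that the positive semi-definite matrix $\bigl[(1-w_i\overline{w_j})k(\lambda_i,\lambda_j)\bigr]$ has a nonzero null vector. I would show that this obstructs any strict solution: if $g$ were an interpolant with $\|g\|_{\infty,\bG}=:\rho<1$, then for every admissible kernel $k$ one has the sharper positivity $\bigl[(1-\rho^{-2}w_i\overline{w_j})k(\lambda_i,\lambda_j)\bigr]\succeq 0$ (apply Theorem \ref{T:SolveG} to the data $\{(\lambda_j,w_j/\rho)\}$, which is solvable via $g/\rho$). Subtracting, $\bigl[(\rho^{-2}-1)w_i\overline{w_j}k(\lambda_i,\lambda_j)\bigr]\succeq 0$ and hence $\bigl[w_i\overline{w_j}k(\lambda_i,\lambda_j)\bigr]\succeq 0$; adding this back to the previous display and rearranging would force $\bigl[(1-w_i\overline{w_j})k(\lambda_i,\lambda_j)\bigr]$ to be strictly positive definite on the span of the $w_i$-weighted coordinate vectors, contradicting the existence of the null vector. (One must be slightly careful about the case where some $w_j=0$, but then the corresponding rows/columns behave like those of $k$ itself, which is already strictly positive on its diagonal, so the argument goes through.) Thus no strict solution exists, and since the problem is solvable by hypothesis, it is extremal.

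The main obstacle I anticipate is making the perturbation argument in the forward direction genuinely uniform over the infinite family of all admissible kernels: strict positivity of each individual Pick matrix does not automatically come with a uniform lower bound on the smallest eigenvalue across the whole family, so one needs either a compactness/normalization argument (the admissible kernels can be normalized at the nodes, and the relevant matrices live in a compact set on which the smallest eigenvalue is a continuous positive function, hence bounded below) or a direct invocation of a Hahn--Banach/duality separation as in the proof of the von Neumann-type solvability criterion. An alternative, perhaps cleaner, route is to avoid perturbing the data and instead argue by contradiction directly: assume no active kernel exists and deduce, via the duality underlying Theorem \ref{T:SolveG} (the Pick problem is solvable iff a certain point lies in a closed convex cone), that the target point lies in the \emph{interior} of the solvability cone, which is exactly the statement that a strict solution exists. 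I would pick whichever of these two packagings meshes most smoothly with the conventions already established in \cite{BS}, and relegate the compactness bookkeeping to a short paragraph.
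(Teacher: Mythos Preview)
The paper does not supply its own argument here; it simply remarks that the proof ``goes along the same lines as that of Lemma 4.4 in \cite{AM_Acta}'' and omits the details. Your outline is precisely that standard argument: a scaling-of-targets computation for the converse, and a compactness/normalization step over the family of admissible kernels for the forward direction. You have correctly isolated the only real difficulty, namely uniformity of the strict positivity over all admissible kernels, and your proposed fix (normalize the kernels on the finite node set and pass to a limit) is exactly what is done in \cite{AM_Acta}.

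One imprecision in your converse deserves tightening. The chain of inequalities you set up does not literally yield that the Pick matrix is ``strictly positive definite on the span of the $w_i$-weighted coordinate vectors''; what it actually gives, once you combine $\bigl[(1-\rho^{-2}w_i\overline{w_j})k_{ij}\bigr]\succeq 0$ with the null condition on $\gamma$, is that both $\bigl[(1-\rho^{-2}w_i\overline{w_j})k_{ij}\bigr]\gamma=0$ and $\bigl[w_i\overline{w_j}k_{ij}\bigr]\gamma=0$, and hence $[k_{ij}]\gamma=0$. This is not yet a contradiction from the diagonal-nonvanishing hypothesis alone. The missing observation is that an \emph{admissible} kernel on a finite set of distinct nodes has an invertible Gram matrix: since $M_s,M_p$ are well-defined multipliers on $H(k)$, applying $q(M_s,M_p)^*$ to a putative dependence $\sum_j\overline{\gamma_j}k_{\lambda_j}=0$ for polynomials $q$ that separate the nodes forces each $\gamma_j=0$. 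With that remark inserted, your converse is complete and your overall plan matches the intended proof.
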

The proof of this lemma goes along the same lines as that of Lemma 4.4 in \cite{AM_Acta}; thus we omit it. In the following, we shall use the following convention of notation
$$k((s_i,p_i),(s_j,p_j))=:k_{ij}.$$

\noindent
{\em Second proof of Theorem \ref{T:Sand}:}
Since the Pick problem is extremal, by Lemma \ref{L:Active}, there is an active kernel $k$, i.e., $k$ is an admissible kernel such that the corresponding Pick matrix has a non-trivial kernel. Let $\gamma=(\gamma_1,\gamma_2,\dots,\gamma_N)^T$ be a non-zero vector such that
\begin{align}\label{gamma}
\big[(1-w_i\overline{w_j})k_{ij}\big]\gamma=0.
\end{align}Note that no component of $\gamma$ can be zero, for otherwise, an $(N-1)$-point sub-problem would have an active kernel, and by Lemma \ref{L:Active}, this $(N-1)$-point Pick problem would be extremal, which contradicts the minimality of the problem.

Invoke Theorem \ref{T:AdmisExt} to obtain a distinguished variety $\cW$ containing each of $(s_j,p_j)$ and an admissible kernel $K$ on $\cW\cap \mathbb G$ extending $k$ with additional properties (i), (ii) and (iii). We shall show that possibly a subvariety of this distinguished variety satisfies the containments in \eqref{Sand}. We shall establish the second containment only in the domain $\bG$. Note that this suffices because every irreducible component of $\cW$ intersects $\bG$. The strategy is to show that if $(s_{N+1}, p_{N+1})$ is any point in $\cW\cap \mathbb G$ other than the nodes $(s_j,p_j)$ and $w_{N+1}$ is complex number assumed by some interpolant at $(s_{N+1}, p_{N+1})$, then $w_{N+1}$ must not depend on the interpolant.

This will take a fair bit of analysis. We start with the observation that since the Pick problem $\{(s_j,p_j)\to w_j\}_{j=1}^{N+1}$ is solvable (by the choice of the point $w_{N+1}$) and $K$ is admissible, by Theorem \ref{T:SolveG}, we have
$$
\big[(1-w_i\overline{w_j})K_{ij}\big]_{i,j=1}^{N+1}\succeq{0},
$$which in particular implies that
\begin{align}\label{N+1P}
\big\langle\big[(1-w_i\overline{w_j})K_{ij}\big]\begin{pmatrix}
\gamma \\
\delta
\end{pmatrix},
\begin{pmatrix}
\gamma \\
\delta
\end{pmatrix}\big\rangle\geq{0},
\end{align}
where $\gamma$ is as in \eqref{gamma} and $\delta$ is any complex number. Unfolding the LHS of \eqref{N+1P} and making use of \eqref{gamma} we get
\begin{align}\label{L1}
2\operatorname{Re}\big[\overline{\delta}\sum_{j=1}^N(1-w_{N+1}\overline{w_j})K_{N+1,j}\gamma_{j}\big]+|\delta|^2(1-|w_{N+1}|^2)K_{N+1,N+1}\geq{0}.
\end{align}
Since the above inequality holds for all $\delta\in\mathbb{C}$, it follows from a routine analysis that
$$\sum_{j=1}^N(1-w_{N+1}\overline{w_j})K_{N+1,j}\gamma_{j}=0,$$
which, after a rearrangement of terms, leads to
\begin{align}\label{L2}
w_{N+1}\big(\sum_{j=1}^N\overline{w_j}K_{N+1, j} \gamma_j\big)=\sum_{j=1}^NK_{N+1,j}\gamma_j.
\end{align}
This clearly yields an expression of $w_{N+1}$ depending only on the interpolation data and the point $(s_{N+1},p_{N+1})$, viz.,
\begin{align}\label{L3-Zero}
w_{N+1}=\frac{\sum_{j=1}^NK_{N+1, j} \gamma_j}{\sum_{j=1}^N\overline{w_j}K_{N+1,j}\gamma_j},
\end{align}
provided that the denominator does not vanish identically.

In the remainder of the proof, we make sure that at each node $(s_j,p_j)$, there is at least one sheet of $\cW$ passing through $(s_j,p_j)$ that contains a sequence converging to $(s_j, p_j)$ at each term of which the denominator of \eqref{L3-Zero} is non-zero. Then every interpolant agrees along the sequence and thus by the Identity Theorem, every interpolant coincides throughout the sheet. We shall then take the union of those irreducible components of $\cW$ that contains these sheets and discard the other components of $\cW$, if there are any. Note that the resulting variety will possibly be a sub-variety of $\cW$ which would nevertheless satisfy \eqref{Sand}.

We consider the node $(s_1,p_1)$; the analysis for the other nodes remains the same. Item (iii) of Theorem \ref{T:AdmisExt} will come in handy now. By part (iii) of Theorem \ref{T:AdmisExt}, there are $q_1$ sequences $\{(s_{n,i},p_n)\}_{n=1}^\infty$, $i=1,2,...,q_1$ in $\cW\cap\mathbb G$, each of which converges to $(s_1,p_1)$ and
\begin{align}\label{Finally}
\sum_{i=1}^{q_1}u(s_{n,i},p_n) \rightarrow u(s_1,p_1)\quad\text{as}\quad n\to\infty.
\end{align}
Suppose on the contrary that there is no sheet passing through $(s_1,p_1)$ which contains a sequence converging to $(s_1,p_1)$ and the denominator of \eqref{L3-Zero} vanishes at each of the points in the sequence. Then there is a neighbourhood of $(s_1,p_1)$ in $\mathbb G$ such that the denominator vanishes identically in the intersection of the neighbourhood with every sheet passing through $(s_1,p_1)$. This means that the denominator would vanish for the choice $(s_{N+1},p_{N+1})=(s_{n,i},p_n)$ for every $n$ and $i$, i.e.,
\begin{align*}
0=\sum_{j=1}^N\overline{w_j}K\big((s_j, p_j), (s_{n,i},p_n)\big)\gamma_j&=\big\langle\sum_{j=1}^N\overline{w_j}\gamma_j\langle \mathbb{S}_{\overline{p_j}}, \mathbb{S}_{\overline{p_n}}\rangle u(s_j, p_j), u(s_{n,i}, p_n)\big\rangle.
\end{align*}
Since this is true for every $i=1,2,\dots,q_1$ and $n\in{\mathbb{N}}$, in view of the convergence \eqref{Finally}, we therefore have
\begin{align}\label{L4}
\sum_{j=1}^N\overline{w_j}K_{1,j}\gamma_j=0.
\end{align}
The equality above jeopardizes the minimality of the extremal problem $\{\lambda_j\to w_j\}_{j=1}^N$. Indeed, if $\epsilon$ is any non-zero number, then the interpolation problem $\{\lambda_j\to w_j'\}_{j=1}^N$ where $w_1'=w_1+\epsilon$ and $w_j'=w_j$ for $j\geq 2$ is not solvable. This is easily seen because, with the admissible kernel $K$ and the non-zero vector $\gamma$ as in \eqref{gamma}, we have
\begin{align*}
\big\langle\big[((1-w_i'\overline{w'_j})K_{ij})\big]\gamma, \gamma\big\rangle&=
\sum_{i,j=1}^N(1-w'_i\overline{w'_j})K_{ij}\gamma_j\overline{\gamma_i}\\
&=\sum_{i,j=1}^N(1-w_i\overline{w_j})K_{ij}\gamma_j\overline{\gamma_i}-2\operatorname{Re}\big[\epsilon\overline{\gamma_1}\sum_{j=1}^N\overline{w_j}K_{1,j}\gamma_j\big]-|\epsilon|^2K_{11}|\gamma_1|^2\\
&=-|\epsilon|^2K_{11}|\gamma_1|^2<0. \quad\text{[by  \ref{gamma} and \ref{L4}]}
\end{align*}
Here we are using the fact that $K$ does not vanish at the diagonal entries and that none of the entries of $\gamma$ is zero. From the fact that the problem $\{\lambda_j\to w_j'\}_{j=1}^N$ is not solvable for any $\epsilon\neq 0$, we conclude that the value $w_1$ at the node $\lambda_1$ is uniquely determined by the other values at $\lambda_2, \lambda_3,\dots,\lambda_{N}$. This makes the $(N-1)$-point problem $\{\lambda_j\to w_j\}_{j=2}^N$ extremal, contradicting the minimality of the problem.

Therefore, equation \eqref{L3-Zero} gives a unique representation formula for a function interpolating the data along some sheets. This was to be proved.
\qed

\begin{remark}
Given a distinguished variety $\cW$, it is possible to construct a suitable extremal interpolation data for which the set $\cW\cap\bG$ is the uniqueness set. This amounts to a converse of Theorem \ref{T:Sand}, which was recently proved by the authors in \cite[see Theorem 2.10 and Remark 2.11]{DKS}.
\end{remark}

\section{Examples}
In this section, we consider several viable examples of Pick problems and compute their uniqueness varieties explicitly.  First, we consider an example of a $2$-point problem with a unique solution and hence is extremal.
\begin{example}\label{this}
Consider the data $\{((-i/{\sqrt{2}}, 0), 0),((1/{\sqrt{2}}, 0), 1/{\sqrt{2}})\}$ in $\mathbb{G}$. Define the holomorphic map $f:\mathbb{D}\rightarrow \mathbb{G}$ by setting $\lambda\rightarrow(\lambda+ a(\lambda), \lambda\cdot a(\lambda))$, where $a$ is the automorphism of $\mathbb{D}$ given by 
$a(\lambda)=i{(\sqrt{2}\lambda-1)}{(\sqrt{2}-\lambda)}^{-1}$. It is easy to see that $f(0)=(-i/{\sqrt{2}}, 0)$ and $f(1/{\sqrt{2}})=(1/{\sqrt{2}}, 0)$. Therefore by Schwarz lemma any $F:\mathbb{G}\rightarrow{\mathbb{D}}$ interpolating the data is a left inverse of $f$, and vice versa. On the other hand, since $a(\lambda)=\lambda$ has a double root on $\mathbb{T}$ viz.,  $h=(1-i)/{\sqrt{2}}$, by Theorem 5.4 of \cite{KZ_JGA}, $f$ has a unique left inverse. Moreover, the left inverse is given in Lemma 5.7 of \cite{KZ_JGA}, which in this case simplifies to the formula 
$$F(s,p)=\frac{\sqrt{2}(i+\sqrt{2}s-(1+2i)p)}{(3i+1)+\sqrt{2}(1-i)s-(1-i)p}.$$
Since $f$ has a unique left inverse, the solution is unique and hence the uniqueness set is the whole of $\mathbb{G}$.
\end{example}
Now we consider two examples of $2$-point extremal problems such that the solution is not unique. The non-uniqueness of the solutions can also be explained by Theorem 5.4 of \cite{KZ_JGA}.
\begin{example}\label{that}
  Consider the data $\{((0,0),0), ((1,1/4),1/2)\}$ in $\bG$. While both the functions $\frac{s}{2}$ and $-\frac{2p-s}{2-s}$ solve the problem, it is extremal because if $f:\bG\to\bD$ is any function solving the problem, then the holomorphic function $\tilde{f}:\mathbb{D}\rightarrow{\mathbb{D}}$ defined as
\begin{align}\label{Sol-f}
\tilde{f}(z)= f(2z, z^2)
\end{align}
interpolates the data $\{(0, 0), (1/2, 1/2)\}$ in $\mathbb{D}$. Thus, by Schwarz lemma we have
\begin{align}\label{DiskSol}
\tilde{f}(z)=f(2z, z^2)=z \text { for all } z\in\mathbb{D}.
\end{align} Therefore $f$ has the supremum norm over $\bG$ one.

Since the Pick problem does not have a unique solution, it is worthwhile to compute its uniqueness variety. We show that the uniqueness variety corresponding to this Pick problem is the royal variety
\begin{align}\label{UniVar}
\cR=\{(2z, z^2): z\in\mathbb{C}\}.
\end{align}
Indeed, the analysis above shows that any two interpolants coincide on the set $\{(2z, z^2): z\in\mathbb{D}\}$. This shows that $\cR$ is contained in the uniqueness set. Conversely, if $(s,p)$ is in the uniqueness set, then, in particular, the two interpolants $\frac{s}{2}$ and $-\frac{2p-s}{2-s}$ would agree at $(s,p)$, which holds if and only if $s^2=4p$. Thus the uniqueness set must be contained in
$$\{(s,p)\in\bG:s^2=4p\}=\{(2z, z^2): z\in\mathbb{D}\}.$$ Therefore the uniqueness variety must be $\cR$.
\end{example}
\begin{example}\label{E:Ill}
The data ${((0,0),0), ((0,1/2),1/2)}$ is extremal and its uniqueness variety is given by $\{(s,p):s=0\}$. If $f$ is any solution, then $\tilde{f}:\mathbb{D}\rightarrow\mathbb{D}$ given as $\tilde{f}(z)= f(0, z)$ solves the Pick data $\{(0,0),(1/2,1/2)\}$ in $\bD$. Thus by the Schwarz lemma, we have
\begin{align}\label{UniVAr2}
\tilde{f}(z)=f(0,z)=z \text { for all } z\in\mathbb{D}.
\end{align}
This shows that the problem is extremal. It is evident from \eqref{UniVAr2} that any solution of the given data has to agree on the set $\{(0, z):z\in\mathbb{D}\}$=$\{(s,p)\in\bG:s=0\}$. For the other containment, note that both the functions $p$ and $(2p-s)/(2-s)$ solve the data. They agree at $(s,p)\in\bG$ if and only if $s=0$. This establishes the claim.
\end{example}

\textbf{Funding:} The first author is supported by the Mathematical Research Impact Centric Support (MATRICS) grant, File No: MTR/2021/000560, by the Science and Engineering Research Board (SERB), Department of Science and Technology (DST),
Government of India. The second author is partially supported by a PIMS postdoctoral fellowship. The research of the third author is supported by DST INSPIRE Faculty Fellowship DST/INSPIRE/04/2018/002458. 

\textbf{Acknowledgement:} We are immensely thankful to the referees for various insightful comments; in particular to the referee who conjectured Lemma \ref{L:Added}.

\end{document}